\documentclass[10pt]{article}
\usepackage{listings}
\usepackage{pdflscape}
\usepackage{amsfonts}
\usepackage{epsfig}
\usepackage{lscape}
\usepackage{longtable}
\usepackage{amsmath,amssymb,amsthm}
\usepackage{graphicx}
\usepackage{subfigure}
\usepackage{color}
\usepackage{placeins}
\usepackage{url}
\usepackage{cases}
\usepackage{longtable}
\usepackage{supertabular}
\usepackage{multirow}

\usepackage{amsmath}
\allowdisplaybreaks[4]
\usepackage{cite}
\usepackage{algorithmic}
\usepackage{algorithm,float}
\usepackage{booktabs}
\usepackage[shortlabels]{enumitem}
\usepackage{hyperref}
\usepackage[online]{threeparttablex}
\usepackage{siunitx}
\usepackage{tikz}
\usepackage{bm}
\usepackage{arydshln}
\hypersetup{hidelinks}


\newtheorem{theorem}{Theorem}
\newtheorem{assumption}{Assumption}

\newtheorem{remark}{Remark}
\newtheorem{definition}{Definition}
\newtheorem{proposition}{Proposition}
\newtheorem{corollary}{Corollary}

\begin{document}
\title{\bf \Large{Robust principal component analysis with rank and cardinality regularization under matrix factorization}\footnotemark[1]}
\author{Wenjing Li\footnotemark[2], \quad Wei Bian \footnotemark[3], \quad Kim-Chuan Toh\footnotemark[4]}

\date{}
\maketitle

\renewcommand{\thefootnote}{\fnsymbol{footnote}}
\footnotetext[1]{This work is funded by the National Natural Science Foundation of China Grants (12425115, 12271127, 12301397, 62176073).}
\footnotetext[2]{School of Mathematics, Harbin Institute of Technology, Harbin 150001, China. Part of this work is done while the author is a visiting scholar in the Department of Mathematics at the National University of Singapore (\tt{liwenjingsx@163.com}).}
\footnotetext[3]{Corresponding author. School of Mathematics, Harbin Institute of Technology, Harbin 150001, China (\tt{bianweilvse520@ 163.com}).}
\footnotetext[4]{Department of Mathematics and Institute of Operations Research and Analytics, National University of Singapore, 10 Lower Kent Ridge Road, Singapore (\tt{mattohkc@nus.edu.sg}).}
\renewcommand{\thefootnote}{\arabic{footnote}}

\begin{abstract}
Robust principal component analysis is an important representative method in data analysis. It is usually viewed as an optimization problem involving the rank and $\ell_0$-norm of matrices. In this paper, we study the rank and $\ell_0$ regularized optimization problem and its matrix factorization problem. We establish their equivalences on global minimizers and stationary points, respectively. Furthermore, we construct a broadly applicable equivalent nonconvex relaxation framework for the constrained factorization model in the sense of global minimizers and stationary points with strong optimality conditions (called strong stationary points). For the general factorization problem with lower semicontinuous regularizers and a loss function whose gradient is locally Lipschitz, we propose a novel proximal gradient-based algorithm based on joint and alternating calculation with convergence to its limiting-critical points. The algorithm can attain the stationary points of the original problem and its adaptive counterpart can attain the strong stationary points of the factorization problem.
\end{abstract}

\medskip
\noindent
{\bf Keywords:} robust principal component analysis, rank and $\ell_0$-norm regularization, matrix factorization, nonconvex relaxation equivalence, joint-alternating proximal gradient algorithm

\section{Introduction}
Robust principal component analysis (RPCA) is one of the most representative approaches for data analysis, that is often used for image denoising and video processing. It has been widely viewed as the task of recovering a low-rank matrix from sparsely corrupted data. Denote $\varOmega\subseteq\{1,\ldots,m\}\times\{1,\ldots,n\}$ as the index set of observed entries, and $\mathcal{P}_{\varOmega}:\mathbb{R}^{m\times n}\rightarrow \mathbb{R}^{m\times n}$ as the operator such that for any $Z\in\mathbb{R}^{m\times n}$, $(\mathcal{P}_{\varOmega}(Z))_{ij}=Z_{ij}\ \text{if}\ (i,j)\in\varOmega\ \text{and}\ 0\ \text{otherwise}$. RPCA asks to recover the low-rank matrix $Z^l\in\mathbb{R}^{m\times n}$ from the observed matrix $\mathcal{P}_{\varOmega}(Z^s)$ with $Z^s=Z^l+S^u$, where $S^u\in\mathbb{R}^{m\times n}$ is an unknown sparse corruption matrix. For any matrix $S\in\mathbb{R}^{m\times n}$, its sparsity is usually measured by the $\ell_0$-norm, defined as $\|S\|_0=\sum_{i=1}^m\sum_{j=1}^n\vartheta(|S_{ij}|)$, where \begin{equation}\label{c_def}
	\text{$\vartheta:\mathbb{R}_+\rightarrow\mathbb{R}_+$ satisfies that $\vartheta(t)=1$ if $t>0$, and $\vartheta(t)=0$ otherwise.}
\end{equation} The ideal optimization model of RPCA is as follows,
\begin{equation}\label{ori-m}
	\min_{Z,S\in\mathbb{R}^{m\times n}}~\text{rank}(Z)+\alpha\,\|S\|_0,\quad\text{s.t. }\mathcal{P}_{\varOmega}(Z+S)=\mathcal{P}_{\varOmega}(Z^s)
\end{equation}
with parameter $\alpha>0$, in which $S$ represents the sparse matrix variable. Due to the nature of rank function and $\ell_0$-norm, this problem is NP-hard in general. Considering the various noise types in real-world applications, the actual observation is $\mathcal{P}_{\varOmega}(Z^l+S^u+R^u)$, where $Z^l$ is the low-rank target matrix, and $S^u$ and $R^u$ are unknown sparse corruption matrix and random noise matrix, respectively. This can be regarded as the stable RPCA formulation in \cite{bouwmans2018applications}. As the target matrix $Z^l$ is inherently low-rank, there is typically an anticipated upper bound $d$ on its rank, i.e., $\text{rank}(Z^l)\leq d$. Without prior knowledge about the noise $R^u$ in practice, the RPCA model can be reasonably expressed by the rank and $\ell_0$ regularized problem,
\begin{equation}\label{m_s_l}
	\tag{L}
	\min_{(Z,S)\in\mathbb{R}^{m\times n}\times\mathcal{B}^S:\,\text{rank}(Z)\leq d}~F_0(Z,S):=f(Z,S)+2\lambda\,\text{rank}(Z)+\beta\|S\|_0,\\
\end{equation}
where $f:\mathbb{R}^{m\times n}\times\mathbb{R}^{m\times n}\rightarrow\mathbb{R}_+$ is locally Lipschitz continuous, $\mathcal{B}^S:=[\bm{\kappa}^1,\bm{\kappa}^2]\subseteq\mathbb{R}^{m\times n}$ with ${\bm{\kappa}}^1_{ij}\in[-\infty,0]$ and ${\bm{\kappa}}^2_{ij}\in[0,+\infty],\forall i,j$, and parameters $\lambda$ and $\beta$ are nonnegative. Problem (\ref{m_s_l}) is a novel model with theoretical challenges and computational potential for RPCA. For optimization problems involving the rank function, a common approach is to solve the corresponding relaxation problems with  nuclear norm \cite{Srebro2005}, weighted nuclear norm \cite{Gu_2014_CVPR}, truncated nuclear norm \cite{HuPAMI2013} or Schatten-$p$ ($0<p<1$) norm \cite{ShangAAAI2016}, and its calculations usually involve a singular value decomposition (SVD) in each iteration \cite{bouwmans2018applications,Cai2010SIOPT,yao2019large}. This method often incur a high computational cost and is not appropriate for processing high-dimensional data.
\medskip

For any matrix $Z\in\mathbb{R}^{m\times n}$ with $\text{rank}(Z)\leq d$, it can be formulated as $Z=XY^{\mathbb{T}}$ with two reduced-size matrices $X\in\mathbb{R}^{m\times d}$ and $Y\in\mathbb{R}^{n\times d}$. This matrix factorization is an effective dimensionality reduction method for optimization problems with low-rank matrices as variables. As shown in \cite{Fan2019factor}, when $\text{rank}(Z)\leq d$, the rank and column sparsity of matrices have the equivalent relation that $\text{rank}(Z)=\min_{(X,Y)\in\mathbb{R}^{m\times d}\times\mathbb{R}^{n\times d}:XY^{\mathbb{T}}=Z}\frac{1}{2}\big(\text{nnzc}(X)$ $+\text{nnzc}(Y)\big)$,
where $\text{nnzc}(\cdotp)$ denotes the number of non-zero columns of the matrix. In light of these, it is not hard to deduce the equivalent factorization problem (\ref{s_l_nc}) of problem (\ref{m_s_l}) in the sense of global minimizers as follows:
\begin{equation}\label{s_l_nc}
	\tag{F}
	\min_{(X,Y,S)\in\mathbb{R}^{m\times d}\times\mathbb{R}^{n\times d}\times\mathcal{B}^S}~F(X,Y,S):=f(XY^{\mathbb{T}},S)+\lambda\big(\text{nnzc}(X)+\text{nnzc}(Y)\big)+\beta\|S\|_0.
\end{equation}
In the expression of function $F$, the terms are loss function, low-rank inducing regularizer and sparse regularizer in order. The matrix decomposition can reduce the variable dimension of (\ref{m_s_l}) and avoid the calculation of SVD, so that problem (\ref{s_l_nc}) has significant computational advantages. By incorporating practical prior information as constraints, we can focus on the factorization problem (\ref{s_l_nc}) with constraints
\begin{equation}\label{s_l}
	\tag{FB}
	\min_{(X,Y,S)\in\mathcal{B}^X\times \mathcal{B}^Y\times \mathcal{B}^S}~F(X,Y,S),
\end{equation}
where $\mathcal{B}^X:=\{X\in\mathbb{R}^{m\times d}:\|X_i\|_2\leq\tau,\,\forall i\}$ and $\mathcal{B}^Y:=\{Y\in\mathbb{R}^{n\times d}:\|Y_i\|_2\leq\tau,\,\forall i\}$, with $X_i$ and $Y_i$ being the $i$-th column of $X$ and $Y$, respectively, and $\tau\in[0,+\infty]$ being a given bound. Model (\ref{s_l}) is novel for RPCA, and encompasses many models with broad application backgrounds. When $\tau=+\infty$, problem (\ref{s_l}) is problem (\ref{s_l_nc}). When $\lambda=0$, problem (\ref{s_l}) is an $\ell_0$ regularized problem, and when $\beta=0$, problem (\ref{s_l}) is a rank regularized factorization problem. These models are NP-hard to solve in general but are popular ideal models in high-dimensional data processing, such as wireless communication, image restoration, system identification and collaborative filtering. In \cite{Li2024}, problem (\ref{s_l}) with $\beta=0$ and $f(XY^{\mathbb{T}},S)$ independent of $S$, is studied together with its capped-$\ell_1$ relaxation problem. In \cite{zhao2022robust}, with bounded constraints, the robust tensor completion model with tensor average rank and $\ell_0$-norm is considered, based on the equivalence to some nonconvex relaxation problems in the sense of global minimizers.
\medskip

Problems (\ref{m_s_l}) and (\ref{s_l_nc}) with $\beta=0$ and $f(XY^{\mathbb{T}},S)$ independent of $S$ are studied in \cite{Li2024}, proving that any local minimizer of (\ref{m_s_l}) is a local minimizer of (\ref{s_l_nc}), and establishing an inclusion relation on the global minimizers among (\ref{m_s_l}), (\ref{s_l_nc}) and (\ref{s_l}) where appropriate. In this paper, we will conduct an in-depth analysis on the relations of global minimizers, local minimizers, and stationary points among (\ref{m_s_l}), (\ref{s_l_nc}) and (\ref{s_l}) in the general case. By proving the equivalence between (\ref{m_s_l}) and (\ref{s_l_nc}) in the sense of stationary points, we can obtain the stationary points of (\ref{m_s_l}) by solving (\ref{s_l_nc}) in theory. In particular, the equivalence on the stationary points is also the latest conclusion for the problem $\min_{Z:\,\text{rank}(Z)\leq d}\{f(Z)+\lambda\,\text{rank}(Z)\}$ considered in \cite{Li2024}.
\medskip

The function $\text{nnzc}(\cdotp)$ can be equivalently regarded as the $\ell_{p,0}(p\geq0)$-norm, since $\text{nnzc}(Z)=\|(\|Z_1\|_p,\cdots,$ $\|Z_n\|_p)\|_0,\forall Z\in\mathbb{R}^{m\times n}$, where $Z_i$ denotes the $i$-th column of $Z$ and $\|\cdotp\|_p$ is the vector $\ell_p$-norm. It follows that problem (\ref{s_l}) belongs to mixed group $\ell_{p,0}$ regularized problems with nonconvex loss. For the $\ell_0$($\ell_{p,0}$) regularized problems, a popular method is to solve their relaxation problems. Common relaxations functions of the $\ell_0$-norm include $\ell_1$-norm, $\ell_p$-norm ($0<p<1$) \cite{Foucart2009}, capped-$\ell_1$ penalty \cite{Peleg2008}, smoothly clipped absolute deviation (SCAD) penalty \cite{Fan2001} and minimax concave penalty (MCP) \cite{Zhang2010}, and they can be extended to the $\ell_{p,0}$-norm. The models with some specific nonconvex relaxations are equivalent to the corresponding $\ell_0$($\ell_{p,0}$) regularized problems in the sense of global minimizers \cite{Bian2020,LeThi2015,Li2022,Soubies2017}. In particular, with the help of the specific capped-$\ell_1$ relaxation, a class of relaxation models constructed in \cite{LeThi2015} are equivalent to the $\ell_0$ regularized problem. Moreover, some equivalence on optimality conditions beyond global minimizers between (\ref{s_l}) with $\lambda=0$ and $f(XY^{\mathbb{T}},S)$ independent of $XY^{\mathbb{T}}$, or $\beta=0$ and $f(XY^{\mathbb{T}},S)$ independent of $S$, and its capped-$\ell_1$ relaxation was established in \cite{Bian2020,Li2024}. In this paper, we will construct an equivalent relaxation framework for (\ref{s_l}) in the sense of global minimizers and some optimality conditions, which contains diverse relaxations. The general relaxation theory is also new for the $\ell_0$ regularized problem involving nonconvex loss function.
\medskip

There exist some algorithms to solve the unconstrained version of (\ref{m_s_l}) and its relaxations in \cite{wen2020robust,wen2019nonconvex}, and relaxations of (\ref{ori-m}) and (\ref{s_l}) in \cite{giampouras2018robust,guo2022rank,shang2018bilinear}, with smooth losses. Moreover, some existing algorithms in \cite{latafat2022block,SabachSIIMS2016,YinJSC2017} can solve (\ref{s_l}) with a smooth loss. However, they can only achieve critical points under certain conditions in theory, which may be far away from global minimizers of (\ref{s_l}). Recently, two classes of adaptive alternating proximal gradient (AAPG) algorithms were proposed to solve (\ref{s_l}) with $\beta=0$ and $f(XY^{\mathbb{T}},S)$ independent of $S$. For the general case, it is meaningful to strengthen the optimality condition of stationary points, design an efficient algorithm with guaranteed convergent to strengthened stationary points and build upon AAPG algorithms to make some improvements for (\ref{s_l}). In the algorithmic part, we will consider the following general nonconvex (not necessarily continuous) factorization optimization problem,
\begin{equation}\label{GFP}
	\tag{GF}
	\min_{(X,Y,S)\in\mathbb{R}^{m\times d}\times\mathbb{R}^{n\times d}\times\mathbb{R}^{m\times n}}~G(X,Y,S):=f(XY^{\mathbb{T}},S)+\lambda\big(u(X)+v(Y)\big)+\beta w(S),
\end{equation}
where $f:\mathbb{R}^{m\times n}\times\mathbb{R}^{m\times n}\rightarrow\mathbb{R}$ is differentiable and its gradient $\nabla f$ is locally Lipschitz continuous, parameters $\lambda$ and $\beta$ are nonnegative, $u:\mathbb{R}^{m\times d}\rightarrow\overline{\mathbb{R}}:=(-\infty,+\infty]$, $v:\mathbb{R}^{n\times d}\rightarrow\overline{\mathbb{R}}$ and $w:\mathbb{R}^{m\times n}\rightarrow\overline{\mathbb{R}}$ are proper, lower semicontinuous and bounded from below by affine functions, and their proximal operators can be calculated. Considering the locally Lipschitz continuity of $\nabla f$ and disregarding the structure of $f(XY^{\mathbb{T}},S)$, the existing proximal gradient algorithms in \cite{jia2023convergence,kanzow2025convergence,kanzow2022convergence} can solve (\ref{GFP}). Taking account of the loss function structure, we will propose a proximal gradient-based algorithm to solve (\ref{GFP}) by employing both joint and alternating calculation. Furthermore, we will show that the designed algorithm can attain the stationary points of (\ref{m_s_l}), and its adaptive counterpart can converge to the strengthened stationary points of (\ref{s_l}) and its relaxations.
\medskip

In this paper, we focus on establishing the solution relations between the nonconvex problems (\ref{m_s_l}) and (\ref{s_l_nc}), constructing the continuous nonconvex relaxation framework for problem (\ref{s_l}), solving the general factorization problem (\ref{GFP}), including problem (\ref{s_l}) and its continuous relaxations as specific cases, and improving the theoretical results in some specific cases. The main contributions are as follows.

\begin{itemize}
	\item[(i)] Analyze the equivalent optimality conditions to local minimizers of problems (\ref{m_s_l}) and (\ref{s_l}). Prove the equivalences on global minimizers and stationary points between problems (\ref{m_s_l}) and (\ref{s_l_nc}). This establishes a direct relation from finding the stationary points of (\ref{s_l_nc}) to the stationary points of (\ref{m_s_l}).
	\item[(ii)] Delve into the optimality conditions of problem (\ref{s_l}), which are not satisfied by some non-global local minimizers. In view of them, define a class of stationary points with stronger optimality conditions (called strong stationary points) than the existing stationary point for (\ref{s_l}). In the sense of global minimizers and defined strong stationary points, construct a general equivalent nonconvex relaxation framework of (\ref{s_l}).
	\item[(iii)] Consider the general factorization problem (\ref{GFP}), which includes problem (\ref{s_l}) and its equivalent relaxation problems as specific cases. Propose a proximal gradient-based algorithm with joint and alternating calculation (JA-PG) that converges to the limiting-critical points of (\ref{GFP}). Combining with the equivalence between (\ref{m_s_l}) and (\ref{s_l_nc}) in the sense of stationary points, we also achieve the convergence of JA-PG to the stationary points for (\ref{m_s_l}).
	\item[(iv)] Analyze the computational commonalities of proximal operators associated with the relaxation functions. Based on the derived optimality conditions, develop an adaptive counterpart of JA-PG algorithm to solve the relaxation problems of (\ref{s_l}). Leveraging the equivalent relaxation theory, prove the convergence of the proposed algorithm to strong stationary points of (\ref{s_l}), which also establishes the algorithmic applicability of the relaxation framework.
\end{itemize}
\medskip

The rest of this paper is organized as follows. Section \ref{section 2} shows some properties of (\ref{m_s_l}), (\ref{s_l_nc}) and (\ref{s_l}), and relations in the sense of their global minimizers, local minimizers and stationary points. Section \ref{section 3} provides a general framework for the equivalent nonconvex relaxations of (\ref{s_l}) and present some verifiable conditions to support the equivalences on global minimizers and strong stationary points between (\ref{s_l}) and its relaxations. Section \ref{section 4} presents a proximal gradient-based algorithms with joint and alternating calculation to solve the general factorization problem and shows its adaptive counterpart to solve (\ref{s_l}) and its relaxations with improved convergence results. Section \ref{section 5} provides some numerical experiments to show the efficiency of the proposed models and algorithms. Finally, Section \ref{section 6} gives the conclusions.
\medskip

\textbf{Notations:}
Let $\mathcal{N}_{\infty}^{\sharp}$ be the set of infinite subsets of the set of natural numbers $\mathbb{N}$. The set of all positive integers is denoted by $\mathbb{N}_+$. For any $n\in\mathbb{N}_+$, denote $[n]:=\{1,2,\ldots,n\}$. Denote $\mathbb{R}_+:=[0,+\infty)$ and $\overline{\mathbb{R}}:=(-\infty,+\infty]$. Let $\mathbb{R}^{m\times n}$ be the space of $m\times n$ real matrices endowed with the inner product $\langle X,Y\rangle=\text{trace}(X^{\mathbb{T}}Y)$ and its induced Frobenius norm $\|\cdot\|_F$. For any scalar $\varepsilon>0$, set $\Omega\subseteq\mathbb{R}^{m\times n}$, $\mathbb{I}\subseteq[n]$, $\mathbb{J}\subseteq[m]\times[n]$, and matrix $U\in\mathbb{R}^{m\times n}$, define $B_\varepsilon(U):=\{Z\in\mathbb{R}^{m\times n}:\|Z-U\|_F\leq\varepsilon\}$, $B_\varepsilon:=B_\varepsilon(\bm{0})$, denote $U_i$ as the $i$-th column of $U$, define $U_{\mathbb{I}}:=[U_{i_1},\ldots,U_{i_{|\mathbb{I}|}}]\in\mathbb{R}^{m\times|\mathbb{I}|}$ with the cardinality $|\mathbb{I}|$ of $\mathbb{I}$, $i_j\in\mathbb{I},\forall j\in[|\mathbb{I}|]$ and $i_1<\cdots<i_{|\mathbb{I}|}$, and $\Omega_{\mathbb{I}}:=\{U_{\mathbb{I}}:U\in\Omega\}$, the restricted matrices $\underline{U}_{\mathbb{I}}$, $\underline{U}_{\mathbb{J}}\in\mathbb{R}^{m\times n}$ and sets ${\underline{\Omega}}_{\mathbb{I}}$, ${\underline{\Omega}}_{\mathbb{J}}\subseteq\mathbb{R}^{m\times n}$ are defined as 
$$(\underline{U}_{\mathbb{I}})_{i}=U_{i},\forall i\in\mathbb{I},\,(\underline{U}_{\mathbb{I}})_{i}=\bm{0},\forall i\notin\mathbb{I},\text{ and }{\underline{\Omega}}_{\mathbb{I}}:=\{\underline{U}_{\mathbb{I}}:U\in\Omega\},$$ $$(\underline{U}_{\mathbb{J}})_{ij}=U_{ij},\forall (i,j)\in\mathbb{J},\,(\underline{U}_{\mathbb{J}})_{ij}=0,\forall (i,j)\notin\mathbb{J},\text{ and }{\underline{\Omega}}_{\mathbb{J}}:=\{\underline{U}_{\mathbb{J}}:U\in\Omega\},$$
the indicator sets $\mathbb{I}_U\subseteq[n]$ and $\mathbb{J}_U\subseteq[m]\times[n]$ are defined as 
$$\mathbb{I}_U:=\{i\in[n]:U_i\neq\bm{0}\}\text{ if }\lambda>0\text{ and }[n]\text{ if }\lambda=0,$$ $$\mathbb{J}_U:=\{(i,j)\in[m]\times[n]:U_{ij}\neq0\}\text{ if }\beta>0\text{ and }[m]\times[n]\text{ if }\beta=0,$$
define $\mathbb{I}^c:=[n]\setminus\mathbb{I}$ and $\mathbb{J}^c:=[m]\times[n]\setminus\mathbb{J}$. For a matrix set $\mathcal{M}\subseteq\mathbb{R}^{m\times n}$, $\|\mathcal{M}\|_{\max}:=\sup\{\|U\|_2:U\in\mathcal{M}\}$. For a nonempty set $\Omega\subseteq\mathbb{R}^{m\times n}$ and a matrix $\bar{U}\in\Omega$, $N_{\Omega}(\bar{U})$ denotes the normal cone to $\Omega$ at $\bar{U}$ defined as in \cite{Rockafellar1998}, the interior of $\Omega$ is defined by $\text{int}(\Omega)$ and the indicator function $\delta_\Omega$ is defined by $\delta_\Omega(U)=0$ if $U\in\Omega$ and $\delta_\Omega(U)=+\infty$ otherwise. For any locally Lipschitz continuous function $f:\mathbb{R}^{m\times n}\times\mathbb{R}^{m\times n}\rightarrow\mathbb{R}$, denote $\partial f(\cdot)$ the limiting subdifferential \cite{Rockafellar1998} and $\partial^cf(\cdot)$ the Clarke subdifferential \cite{Clarke1983}. For a proper lower semicontinuous function $h:\mathbb{R}^{m\times n}\rightarrow\overline{\mathbb{R}}$, its domain is defined as $\text{dom}(h):=\{U\in\mathbb{R}^{m\times n}:h(U)<+\infty\}$, and its proximal operator is defined by ${\rm{prox}}_h(Z):={\rm{argmin}}_{V\in\mathbb{R}^{m\times n}}\{h(V)+\frac{1}{2}\|V-Z\|^2_F\}$. For any $Z\in\mathbb{R}^{m\times n}$, denote $d_Z:=\text{rank}(Z)$, $U_Z\in\mathbb{R}^{m\times d_Z}$ and $V_Z\in\mathbb{R}^{n\times d_Z}$ as orthogonal matrices from the compact SVD of $Z$, i.e. $Z=U_Z\Sigma_Z V_Z^{\mathbb{T}}$ with $\Sigma_Z\in\mathbb{R}^{d_Z\times{d_Z}}$ being a diagonal matrix with diagonal elements being the nonzero singular values of $Z$ in descending order, and $\Sigma_Z^{1/2}\in\mathbb{R}^{d_Z\times{d_Z}}$ as the matrix with elements corresponding to square roots of those of $\Sigma_Z$. Define $I_{+}:\mathbb{R}_+\rightarrow(0,+\infty]$ by $I_{+}(t)=t$ if $t>0$ and $I_{+}(t)=+\infty$ otherwise. For any problem ($\vartriangle$), denote the optimal value and the sets of global minimizers and local minimizers as $V_{(\vartriangle)}$, $\mathcal{G}_{(\vartriangle)}$ and $\mathcal{L}_{(\vartriangle)}$, respectively. We use \emph{iff} to denote ``if and only if''.

\section{Equivalences and properties of problems (\ref{m_s_l}), (\ref{s_l_nc}) and (\ref{s_l})}\label{section 2}
In this section, we analyze the relations among problems (\ref{s_l_nc}), (\ref{m_s_l}), (\ref{s_l}), and (\ref{m_s_l}) with constraints as follows,
\begin{equation}\label{m_s_l_b}
	\tag{LB}
	\min_{(Z,S)\in \mathcal{B}^Z\times \mathcal{B}^S:\,\text{rank}(Z)\leq d}~F_0(Z,S),
\end{equation}
where $\mathcal{B}^Z\subseteq\mathbb{R}^{m\times n}$ is a box constraint set. We analyze the properties of problems (\ref{m_s_l_b}) and (\ref{s_l}), which facilitate the subsequent introduction of strong stationary points for problem (\ref{s_l}). Furthermore, we establish the equivalences on global minimizers and stationary points between problems (\ref{m_s_l}) and (\ref{s_l_nc}). The equivalence in the sense of stationary points establishes a direct link between the solutions of these problems.

First, we show the equivalence on the global minimizers between (\ref{m_s_l}) and (\ref{s_l_nc}).
\begin{proposition}\label{gls_mf}
	Problems (\ref{m_s_l}) and (\ref{s_l_nc}) own the same optimal function value and their global minimizers can be transformed into each other, i.e. 
	\begin{equation}\label{lf_gl_transf}
		V_{(\ref{m_s_l})}=V_{(\ref{s_l_nc})}\text{ and }\mathcal{G}_{(\ref{m_s_l})}=\{(XY^{\mathbb{T}},S):(X,Y,S)\in\mathcal{G}_{(\ref{s_l_nc})}\}.
	\end{equation}
\end{proposition}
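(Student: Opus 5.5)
The plan is to reduce everything to the rank/column-sparsity identity recalled from \cite{Fan2019factor}: for any $Z$ with $\text{rank}(Z)\leq d$,
\[
\text{rank}(Z)=\min_{(X,Y)\in\mathbb{R}^{m\times d}\times\mathbb{R}^{n\times d}:\,XY^{\mathbb{T}}=Z}\tfrac{1}{2}\big(\text{nnzc}(X)+\text{nnzc}(Y)\big).
\]
I will prove it directly rather than cite it.

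For any factorization $Z=XY^{\mathbb{T}}$, only columns $i$ with both $X_i\neq\bm 0$ and $Y_i\neq\bm 0$ contribute to the product. Hence $\text{rank}(Z)\leq|\mathbb{I}_X\cap\mathbb{I}_Y|\leq\min\{\text{nnzc}(X),\text{nnzc}(Y)\}\leq\frac12(\text{nnzc}(X)+\text{nnzc}(Y))$. Equality is attained by the balanced SVD factorization
\[
\bar X=[U_Z\Sigma_Z^{1/2},\bm 0],\qquad \bar Y=[V_Z\Sigma_Z^{1/2},\bm 0]\in\mathbb{R}^{n\times d},
\]
padded with $d-d_Z\geq0$ zero columns. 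Each of $\bar X,\bar Y$ then has exactly $d_Z$ nonzero columns.

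Next, compare the objectives. For every feasible $(X,Y,S)$ of (\ref{s_l_nc}), the pair $(XY^{\mathbb{T}},S)$ is feasible for (\ref{m_s_l}), since $\text{rank}(XY^{\mathbb{T}})\leq d$ and $S\in\mathcal{B}^S$. By the inequality above,
\[
F_0(XY^{\mathbb{T}},S)\leq F(X,Y,S).
\]
Conversely, every feasible $(Z,S)$ of (\ref{m_s_l}) gives the feasible triple $(\bar X,\bar Y,S)$, and $F(\bar X,\bar Y,S)=F_0(Z,S)$. Taking infima in both directions yields $V_{(\ref{m_s_l})}=V_{(\ref{s_l_nc})}$. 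This holds also when the common value is not attained, and the value is finite because $f\geq0$.

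For the minimizer sets, take $(X,Y,S)\in\mathcal{G}_{(\ref{s_l_nc})}$. Then
\[
F_0(XY^{\mathbb{T}},S)\leq F(X,Y,S)=V_{(\ref{s_l_nc})}=V_{(\ref{m_s_l})},
\]
so $(XY^{\mathbb{T}},S)\in\mathcal{G}_{(\ref{m_s_l})}$. Conversely, for $(Z,S)\in\mathcal{G}_{(\ref{m_s_l})}$, the triple $(\bar X,\bar Y,S)$ satisfies $F(\bar X,\bar Y,S)=F_0(Z,S)=V_{(\ref{s_l_nc})}$. Hence it lies in $\mathcal{G}_{(\ref{s_l_nc})}$ and maps to $(\bar X\bar Y^{\mathbb{T}},S)=(Z,S)$. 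This gives the set equality in (\ref{lf_gl_transf}).

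The one delicate point is the case $\lambda=0$, where the rank and nnzc terms vanish. There the inequality between $F_0$ and $F$ is trivially an equality, and the argument goes through unchanged. The only genuine content is the inequality $\text{rank}(XY^{\mathbb{T}})\leq\frac12(\text{nnzc}(X)+\text{nnzc}(Y))$ together with its attainment, which I expect to be the main (but mild) step.
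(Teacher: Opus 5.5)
Your proof is correct and takes the same route as the paper, which chains the minimizations of (\ref{m_s_l}) through the identity $\text{rank}(Z)=\min_{XY^{\mathbb{T}}=Z}\frac{1}{2}\big(\text{nnzc}(X)+\text{nnzc}(Y)\big)$ cited from \cite{Fan2019factor}. You differ only in proving that identity directly (column-support bound plus the balanced SVD factorization) and in spelling out the two-sided transfer of minimizers, which the paper covers with ``this implies''; one cosmetic point is that the paper's $\mathbb{I}_X$ equals $[d]$ when $\lambda=0$, so in your bound $\text{rank}(Z)\leq|\mathbb{I}_X\cap\mathbb{I}_Y|$ you should name the nonzero-column sets explicitly rather than reuse that symbol.
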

\begin{proof}
	Let $\Gamma_1:=\mathbb{R}^{m\times n}\times\mathcal{B}^S$ and $\Gamma_2:=\mathbb{R}^{m\times d}\times\mathbb{R}^{n\times d}$. By $\{(X,Y)\in\Gamma_2:\text{rank}(XY^{\mathbb{T}})\leq d\}=\mathbb{R}^{m\times d}\times\mathbb{R}^{n\times d}$ and the relation of $\text{rank}(\cdotp)$ and $\text{nnzc}(\cdotp)$, we have
	\begin{align*}
		&\min_{(Z,S)\in\Gamma_1:\,\text{rank}(Z)\leq d}\{f(Z,S)+2\lambda\,\text{rank}(Z)+\beta\,\|S\|_0\}\\
		=&\min_{(Z,S)\in\Gamma_1:\,{\text{rank}(Z)\leq d}}\big\{f(Z,S)+\lambda\min_{(X,Y)\in\Gamma_2:XY^{\mathbb{T}}=Z}\{\text{nnzc}(X)+\text{nnzc}(Y)\}+\beta\,\|S\|_0\big\}\\
		=&\min_{(X,Y,Z,S)\in\Gamma_2\times\Gamma_1:\,\text{rank}(Z)\leq d,\,XY^{\mathbb{T}}=Z}\{f(Z,S)+\lambda(\text{nnzc}(X)+\text{nnzc}(Y))+\beta\,\|S\|_0\}\\
		=&\min_{(X,Y,S)\in\Gamma_2\times\mathcal{B}^S}\{f(XY^{\mathbb{T}},S)+\lambda\big(\text{nnzc}(X)+\text{nnzc}(Y)\big)+\beta\,\|S\|_0\}.
	\end{align*}
	This implies that (\ref{lf_gl_transf}) holds. 
\end{proof}

Next, we establish the relations on global minimizers among (\ref{m_s_l}), (\ref{m_s_l_b}) and (\ref{s_l}).
\begin{proposition}\label{gl_re_c}
	Assume that $\tau\geq\|\mathcal{B}^Z\|_{\max}^{1/2}$ in problem (\ref{s_l}). If $(\mathcal{B}^Z\times \mathcal{B}^S)\cap\,\mathcal{G}_{(\ref{m_s_l})}\neq\emptyset$, then 
	\begin{equation}\label{rels-mods}
		V_{(\ref{m_s_l_b})}=V_{(\ref{s_l})}=V_{(\ref{m_s_l})}\text{ and }\mathcal{G}_{(\ref{m_s_l_b})}\subseteq\{(XY^{\mathbb{T}},S):(X,Y,S)\in\mathcal{G}_{(\ref{s_l})}\}\subseteq\mathcal{G}_{(\ref{m_s_l})}.
	\end{equation}
\end{proposition}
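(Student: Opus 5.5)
The plan is to sandwich the three optimal values using feasible-set inclusions and balanced factorizations. Then I read off the inclusions of global minimizers from the resulting equalities.

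\emph{Step 1 (balanced factorization).} For any $Z\in\mathcal{B}^Z$ with $\text{rank}(Z)\leq d$, take the compact SVD $Z=U_Z\Sigma_ZV_Z^{\mathbb{T}}$. Set $X=[U_Z\Sigma_Z^{1/2},\bm{0}]\in\mathbb{R}^{m\times d}$ and $Y=[V_Z\Sigma_Z^{1/2},\bm{0}]\in\mathbb{R}^{n\times d}$. Then $XY^{\mathbb{T}}=Z$ and $\text{nnzc}(X)=\text{nnzc}(Y)=d_Z$. Each nonzero column has norm $\sigma_i(Z)^{1/2}\leq\|Z\|_2^{1/2}\leq\|\mathcal{B}^Z\|_{\max}^{1/2}\leq\tau$, so $(X,Y)\in\mathcal{B}^X\times\mathcal{B}^Y$. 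Consequently $F(X,Y,S)=F_0(Z,S)$ for every $S\in\mathcal{B}^S$. Minimizing over feasible $(Z,S)$ gives $V_{(\ref{s_l})}\leq V_{(\ref{m_s_l_b})}$.

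\emph{Step 2 (lower bounds).} For any $(X,Y,S)$ feasible for (\ref{s_l}), set $Z=XY^{\mathbb{T}}$. Then $\text{rank}(Z)\leq d$, and by the nnzc--rank relation $2\,\text{rank}(Z)\leq\text{nnzc}(X)+\text{nnzc}(Y)$. Hence $F(X,Y,S)\geq F_0(Z,S)\geq V_{(\ref{m_s_l})}$, which gives $V_{(\ref{m_s_l})}\leq V_{(\ref{s_l})}$. This step uses only the constraint $\mathcal{B}^S$, not $\mathcal{B}^Z$. Alternatively, note that the feasible set of (\ref{s_l}) is contained in that of (\ref{s_l_nc}) and invoke Proposition \ref{gls_mf}. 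Next, take a point $(\bar Z,\bar S)\in(\mathcal{B}^Z\times\mathcal{B}^S)\cap\mathcal{G}_{(\ref{m_s_l})}$. It is feasible for (\ref{m_s_l_b}), so $V_{(\ref{m_s_l_b})}\leq F_0(\bar Z,\bar S)=V_{(\ref{m_s_l})}$. Chaining the bounds gives $V_{(\ref{m_s_l})}\leq V_{(\ref{s_l})}\leq V_{(\ref{m_s_l_b})}\leq V_{(\ref{m_s_l})}$, so all three values are equal.

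\emph{Step 3 (minimizer inclusions).} Let $(Z,S)\in\mathcal{G}_{(\ref{m_s_l_b})}$ and build $(X,Y)$ as in Step 1. Then $F(X,Y,S)=F_0(Z,S)=V_{(\ref{m_s_l_b})}=V_{(\ref{s_l})}$, so $(X,Y,S)\in\mathcal{G}_{(\ref{s_l})}$ and $Z=XY^{\mathbb{T}}$. This proves the first inclusion. For the second, let $(X,Y,S)\in\mathcal{G}_{(\ref{s_l})}$. By Step 2, $V_{(\ref{m_s_l})}\leq F_0(XY^{\mathbb{T}},S)\leq F(X,Y,S)=V_{(\ref{s_l})}=V_{(\ref{m_s_l})}$. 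So $(XY^{\mathbb{T}},S)$ is feasible for (\ref{m_s_l}) and attains its optimal value, i.e. it lies in $\mathcal{G}_{(\ref{m_s_l})}$.

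\emph{Main obstacle.} The only delicate point is Step 1: the balanced factorization must satisfy the column-norm bound $\tau$. This requires $\|\mathcal{B}^Z\|_{\max}$ to dominate the spectral norm of every $Z$ in the box, which is exactly how the norm is defined. The case $d_Z=0$ is trivial, since then $X=\bm{0}$ and $Y=\bm{0}$.
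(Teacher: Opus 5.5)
Your proof is correct and follows essentially the same route as the paper. The balanced SVD factorization, which the paper imports from \cite[Proposition 2]{Li2024}, gives $V_{(\ref{s_l})}\leq V_{(\ref{m_s_l_b})}$; the lower bound $V_{(\ref{m_s_l})}\leq V_{(\ref{s_l})}$ (obtained in the paper via $V_{(\ref{s_l_nc})}\leq V_{(\ref{s_l})}$ and Proposition \ref{gls_mf}) together with the assumed global minimizer in $\mathcal{B}^Z\times\mathcal{B}^S$ closes the chain, and the minimizer inclusions are read off from the equal values. Your version differs only in being more self-contained: you construct the factorization explicitly, check the column bound $\sigma_i(Z)^{1/2}\leq\tau$ directly, and prove the lower bound via $2\,\text{rank}(XY^{\mathbb{T}})\leq\text{nnzc}(X)+\text{nnzc}(Y)$.
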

\begin{proof}
	By \cite[Proposition 2]{Li2024} and $\tau\geq\|\mathcal{B}^Z\|_{\max}^{1/2}$, we have that for any $Z\in\mathcal{B}^Z$, there exists $(X,Y)\in\mathcal{B}^X\times \mathcal{B}^Y$ such that $XY^{\mathbb{T}}=Z$ and ${\text{nnzc}}(X)={\text{nnzc}}(Y)={\text{rank}}(Z)$, which implies $V_{(\ref{s_l})}\leq V_{(\ref{m_s_l_b})}$. Combining with the fact that $V_{(\ref{s_l_nc})}\leq V_{(\ref{s_l})}$, we get $V_{(\ref{s_l_nc})}\leq V_{(\ref{s_l})}\leq V_{(\ref{m_s_l_b})}$. It follows from Proposition \ref{gls_mf} that $V_{(\ref{m_s_l})}=V_{(\ref{s_l_nc})}$ and $\mathcal{G}_{(\ref{m_s_l})}=\{(XY^{\mathbb{T}},S):(X,Y,S)\in\mathcal{G}_{(\ref{s_l_nc})}\}$. Due to $\mathcal{G}_{(\ref{m_s_l})}\cap (\mathcal{B}^Z\times \mathcal{B}^S)\neq\emptyset$, we obtain $V_{(\ref{m_s_l})}=V_{(\ref{m_s_l_b})}$. Thus, we have $V_{(\ref{m_s_l})}=V_{(\ref{s_l_nc})}=V_{(\ref{s_l})}=V_{(\ref{m_s_l_b})}$. It follows from \cite[Proposition 2]{Li2024} and the relations between the constraint sets that $\mathcal{G}_{(\ref{m_s_l_b})}\subseteq\{(XY^{\mathbb{T}},S):(X,Y,S)\in\mathcal{G}_{(\ref{s_l})}\}\subseteq\{(XY^{\mathbb{T}},S):(X,Y,S)\in\mathcal{G}_{(\ref{s_l_nc})}\}=\mathcal{G}_{(\ref{m_s_l})}$.
\end{proof}

The assumptions in Proposition \ref{gl_re_c} naturally hold when the feasible region of (\ref{m_s_l_b}) is large enough. When problems (\ref{m_s_l}) and (\ref{m_s_l_b}) own the same global solution set, all global minimizers of problems (\ref{m_s_l}) and (\ref{s_l}) can be transformed into each other. Based on Proposition \ref{gls_mf} and Proposition \ref{gl_re_c}, some relations among problems (\ref{s_l_nc}), (\ref{m_s_l}), (\ref{s_l}) and (\ref{m_s_l_b}) are as shown in Figure\,\ref{model-rela}.
\begin{figure}
	\centering
	\includegraphics[width=5.7in]{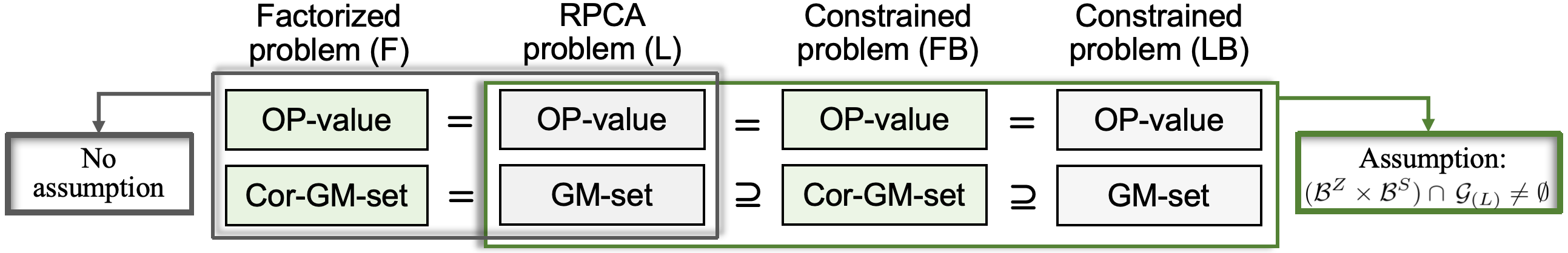}
	\caption{Relations among problems (\ref{s_l_nc}), (\ref{m_s_l}), (\ref{s_l}) and (\ref{m_s_l_b}), where `OP-value' means `optimal value', and `(Cor-)GM-set' means `the set of (corresponding matrix products of) global minimizers'.}\label{model-rela}
\end{figure}

Next, we show some properties for the local minimizers of (\ref{m_s_l_b}) and (\ref{s_l}).
\begin{proposition}\label{orig_rank_ps}
	The following properties hold for problem (\ref{m_s_l_b}).
	\begin{itemize}
		\item[(i)] $(\bar{Z},\bar{S})\in\mathcal{L}_{(\ref{m_s_l_b})}$ iff it is a local minimizer of $f(Z,S)$ on $\mathcal{B}^Z_{\lambda,\bar{Z}}\times{\underline{\mathcal{B}}}^S_{\mathbb{J}_{\bar{S}}}$, where $\mathcal{B}^Z_{\lambda,\bar{Z}}:=\{Z\in\mathcal{B}^Z:\emph{\text{rank}}(Z)\leq d,\,\lambda\,\emph{\text{rank}}(Z)= \lambda\,\emph{\text{rank}}(\bar{Z})\}$.
		\item[(ii)] If $(\bar{Z},\bar{S})\in\mathcal{L}_{(\ref{m_s_l_b})}$, then $\emph{\text{rank}}(\bar{Z})\leq d$ and
		\begin{equation}\label{lr_loc-incl}
			\exists\,\,(H_{Z},H_{S})\in\partial_{(Z,S)} f(\bar{Z},\bar{S}),\,\,\,
			\emph{s.t.}-H_{Z}\in N_{\mathcal{B}^Z_{\lambda,\bar{Z}}}(\bar{Z}),\,-H_{S}\in N_{\underline{\mathcal{B}}^S_{\mathbb{J}_{\bar{S}}}}(\bar{S}).
		\end{equation}
	\end{itemize}
\end{proposition}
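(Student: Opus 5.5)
The plan is to exploit two facts: the regularizers $2\lambda\,\text{rank}(\cdot)$ and $\beta\|\cdot\|_0$ are integer-valued up to scaling, and they are lower semicontinuous. Lower semicontinuity means that in a small enough neighborhood of $(\bar Z,\bar S)$ they cannot drop. Assertion (ii) will then follow from (i) by the standard first-order necessary condition for a locally Lipschitz function over a closed set.

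\emph{Proof of (i), direction ``$\Rightarrow$''.} Let $(\bar Z,\bar S)\in\mathcal{L}_{(\ref{m_s_l_b})}$ with neighborhood radius $\varepsilon$. Take any $(Z,S)\in\mathcal{B}^Z_{\lambda,\bar Z}\times\underline{\mathcal{B}}^S_{\mathbb{J}_{\bar S}}$ close to $(\bar Z,\bar S)$. This point is feasible for (\ref{m_s_l_b}), since $\underline{\mathcal{B}}^S_{\mathbb{J}}\subseteq\mathcal{B}^S$ because the box contains $0$ in every coordinate. By definition $\lambda\,\text{rank}(Z)=\lambda\,\text{rank}(\bar Z)$. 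Because $S$ is supported in $\mathbb{J}_{\bar S}$, we have $\beta\|S\|_0\le\beta\|\bar S\|_0$; when $\beta=0$ both sides vanish. Hence $f(Z,S)\ge f(\bar Z,\bar S)$. Strictly, this step only uses $\le$ for the sparsity term, so a smaller support makes the local minimality inequality stronger, which is the direction we need.

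\emph{Proof of (i), direction ``$\Leftarrow$''.} This is the main step. Shrink the neighborhood so that two things hold. First, by lower semicontinuity of rank, $\text{rank}(Z)\ge\text{rank}(\bar Z)$ and every entry in $\mathbb{J}_{\bar S}$ stays nonzero. Second, $|f(Z,S)-f(\bar Z,\bar S)|<\min\{2\lambda,\beta\}$ over the positive parameters, which uses continuity of $f$. Take a feasible $(Z,S)$ in this neighborhood and split into cases.
\begin{itemize}
\item If $\lambda>0$ and $\text{rank}(Z)>\text{rank}(\bar Z)$, the rank term increases by at least $2\lambda$, and this exceeds any possible decrease of $f$.
\item Likewise, if $\beta>0$ and $S$ has a nonzero entry outside $\mathbb{J}_{\bar S}$, then $\|S\|_0\ge\|\bar S\|_0+1$, again a penalty larger than any drop in $f$.
\item Otherwise, $Z\in\mathcal{B}^Z_{\lambda,\bar Z}$ and $S\in\underline{\mathcal{B}}^S_{\mathbb{J}_{\bar S}}$. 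Moreover $\|S\|_0=\|\bar S\|_0$, since the entries in $\mathbb{J}_{\bar S}$ remain nonzero. So $F_0(Z,S)-F_0(\bar Z,\bar S)=f(Z,S)-f(\bar Z,\bar S)\ge0$ by the assumed local minimality of $f$.
\end{itemize}
The cases $\lambda=0$ or $\beta=0$ are trivial, since the corresponding set is then the whole feasible slice. The care needed here is only in choosing the radius uniformly.

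\emph{Proof of (ii).} The bound $\text{rank}(\bar Z)\le d$ is just feasibility. Write $\Theta:=\mathcal{B}^Z_{\lambda,\bar Z}\times\underline{\mathcal{B}}^S_{\mathbb{J}_{\bar S}}$. By (i), $(\bar Z,\bar S)$ is a local minimizer of $f+\delta_{\Theta}$. Both factors are closed: $\mathcal{B}^Z_{\lambda,\bar Z}$ is locally closed near $\bar Z$, since near $\bar Z$ it coincides with $\{Z\in\mathcal{B}^Z:\text{rank}(Z)\le\text{rank}(\bar Z)\}$ when $\lambda>0$. The generalized Fermat rule then gives $0\in\partial(f+\delta_\Theta)(\bar Z,\bar S)$. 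Since $f$ is locally Lipschitz, the sum rule \cite{Rockafellar1998} yields
\[
0\in\partial f(\bar Z,\bar S)+N_\Theta(\bar Z,\bar S).
\]
Finally, the normal cone of a product set is the product of the normal cones, which splits this inclusion into the two components of (\ref{lr_loc-incl}).
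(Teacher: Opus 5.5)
Your proposal is correct and follows essentially the paper's proof. It uses a neighborhood on which $\text{rank}(Z)$ (lower semicontinuity, which the paper gets via Weyl's inequality) and the support of $\bar S$ cannot drop, and continuity of $f$ to absorb the integer jumps of size $2\lambda$ and $\beta$ in the ``$\Leftarrow$'' direction. For (ii) it uses the Fermat rule, the sum rule for locally Lipschitz $f$, and the product structure of the normal cone. The only cosmetic difference is in the ``$\Rightarrow$'' direction: you use $\beta\|S\|_0\le\beta\|\bar S\|_0$ directly, whereas the paper shrinks the radius so that the supports coincide, and both are valid.
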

\begin{proof}
	(i) The property obviously holds for problem (\ref{m_s_l_b}) with $\lambda=\beta=0$, so we analyze the case that $\lambda\neq0$ or $\beta\neq0$. Let $(\bar{Z},\bar{S})\in\mathcal{B}^Z\times\mathcal{B}^S$ be any given matrix couple. By Weyl's inequality and the property of $\ell_0$-norm, there exists an $\epsilon>0$ such that $\text{rank}(Z)\geq\text{rank}(\bar{Z})$ and $\mathbb{J}_S\supseteq\mathbb{J}_{\bar{S}}$ for any $(Z,S)\in B_{\epsilon}(\bar{Z},\bar{S})$. 
	%Define $\Omega_{\bar{Z}}:=\{(Z,S)\in\mathcal{B}^Z\times{\underline{\mathcal{B}}}^S_{\mathbb{J}_{\bar{S}}}:\lambda\,\text{rank}(Z)=\lambda\,\text{rank}(\bar{Z})\}$. 
	
	Let $(\bar{Z},\bar{S})$ be a local minimizer of $f$ on $\mathcal{B}^Z_{\lambda,\bar{Z}}\times{\underline{\mathcal{B}}}^S_{\mathbb{J}_{\bar{S}}}$. On the one hand, there exists an $\hat\epsilon>0$ such that $F_0(\bar{Z},\bar{S})\leq F_0(Z,S),\forall (Z,S)\in\{(Z,S)\in B_{\hat\epsilon}(\bar{Z},\bar{S})\cap(\mathcal{B}^Z\times\mathcal{B}^S):{\text{rank}}(Z)\leq d,\,\lambda\,{\text{rank}}(Z)=\lambda\,{\text{rank}}(\bar{Z}),\,\mathbb{J}_S=\mathbb{J}_{\bar{S}}\}$. On the other hand, it follows from the continuity of $f$ that there exists an $\bar\epsilon\in(0,\min\{\hat\epsilon,\epsilon\}]$ such that $f(\bar{Z},\bar{S})\leq f(Z,S)+\min\{2I_{+}(\lambda),I_{+}(\beta)\},\forall (Z,S)\in B_{\bar\epsilon}(\bar{Z},\bar{S})$. This means that $F_0(\bar{Z},\bar{S})\leq f(Z,S)+\min\{2I_{+}(\lambda),I_{+}(\beta)\}+2\lambda\,\text{rank}(\bar{Z})+\beta\,\|\bar{S}\|_0\leq F_0(Z,S),\forall (Z,S)\in\{(Z,S)\in B_{\bar\epsilon}(\bar{Z},\bar{S})\cap(\mathcal{B}^Z\times\mathcal{B}^S):{\text{rank}}(Z)\leq d,\,\lambda\,{\text{rank}}(Z)\neq\lambda\,{\text{rank}}(\bar{Z})\,\,\text{or}\,\,\mathbb{J}_S\neq\mathbb{J}_{\bar{S}}\}$. Thus, $(\bar{Z},\bar{S})\in\mathcal{L}_{(\ref{m_s_l_b})}$.
	
	Conversely, consider $(\bar{Z},\bar{S})\in\mathcal{L}_{(\ref{m_s_l_b})}$. Then there exists $\varepsilon\in(0,\epsilon]$ such that
	\begin{equation}\label{lcmn_ieq}
		F_0(\bar{Z},\bar{S})\leq F_0(Z,S),\forall (Z,S)\in B_\varepsilon(\bar{Z},\bar{S})\cap(\mathcal{B}^Z\times\mathcal{B}^S)\text{ with }\text{rank}(Z)\leq d,
	\end{equation}which means $f(\bar{Z},\bar{S})\leq f(Z,S),\forall (Z,S)\in B_{\varepsilon}(\bar{Z},\bar{S})\cap\mathcal{B}^Z_{\lambda,\bar{Z}}\times{\underline{\mathcal{B}}}^S_{\mathbb{J}_{\bar{S}}}$. Therefore, $(\bar{Z},\bar{S})$ is a local minimizer of $f$ on $\mathcal{B}^Z_{\lambda,\bar{Z}}\times{\underline{\mathcal{B}}}^S_{\mathbb{J}_{\bar{S}}}$.
	
	(ii) Let $(\bar{Z},\bar{S})\in\mathcal{L}_{(\ref{m_s_l_b})}$. Then, by (i), $(\bar{Z},\bar{S})$ is a local minimizer of $f(Z,S)$ on $\mathcal{B}^Z_{\lambda,\bar{Z}}\times{\underline{\mathcal{B}}}^S_{\mathbb{J}_{\bar{S}}}$. It follows from \cite[Theorem 10.1]{Rockafellar1998} and \cite[Exercise 10.10]{Rockafellar1998} that $\bm{0}\in \partial_{(Z,S)}f(\bar{Z},\bar{S})+N_{\mathcal{B}^Z_{\lambda,\bar{Z}}}(\bar{Z})\times N_{\underline{\mathcal{B}}^S_{\mathbb{J}_{\bar{S}}}}(\bar{S})$. Then, (\ref{lr_loc-incl}) holds by \cite[Theorem 6.23]{Penot2013}.
\end{proof}

\begin{proposition}\label{s_l_props}
	Problem (\ref{s_l}) owns the following properties.
	\begin{itemize}
		\item[(i)] If $(X,Y,S)\in\mathcal{G}_{(\ref{s_l})}$, then
		\begin{equation}\label{I-eq}
			\tag{C1}
			\mathbb{I}_{X}=\mathbb{I}_{Y}.
		\end{equation}
		\item[(ii)] $(\bar{X},\bar{Y},\bar{S})\in\mathcal{L}_{(\ref{s_l})}$ iff it is a local minimizer of $f(XY^{\mathbb{T}},S)$ on ${\underline{\mathcal{B}}}^X_{\mathbb{I}_{\bar{X}}}\times{\underline{\mathcal{B}}}^Y_{\mathbb{I}_{\bar{Y}}}\times{\underline{\mathcal{B}}}^S_{\mathbb{J}_{\bar{S}}}$.
		\item[(iii)] If $(\bar{X},\bar{Y},\bar{S})\in\mathcal{L}_{(\ref{s_l})}$, then
		\begin{equation}\label{loc-incl}
			\begin{split}
				\exists\,\,\,\,\,&(H_Z,H_S)\in\partial_{(Z,S)}f(\bar{X}\bar{Y}^{\mathbb{T}},\bar{S}),\\
				\emph{s.t.}\,\,&\bm{0}\in H_Z\bar{Y}+N_{{\underline{\mathcal{B}}}^X_{\mathbb{I}_{\bar{X}}}}(\bar{X}),\,\bm{0}\in H_Z^{\mathbb{T}}\bar{X}+N_{{\underline{\mathcal{B}}}^Y_{\mathbb{I}_{\bar{Y}}}}(\bar{Y}),\,\bm{0}\in H_S+N_{{\underline{\mathcal{B}}}^S_{\mathbb{J}_{\bar{S}}}}(\bar{S}).
			\end{split}
		\end{equation}
	\end{itemize}
\end{proposition}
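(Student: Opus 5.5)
We take the three parts in order; (ii) and (iii) mirror the proof of Proposition \ref{orig_rank_ps}.

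\emph{Part (i).} Let $(X,Y,S)\in\mathcal{G}_{(\ref{s_l})}$ and suppose $\lambda>0$; if $\lambda=0$, then $\mathbb{I}_X=\mathbb{I}_Y=[d]$ by definition and there is nothing to prove. Assume, for contradiction, that some column index $i$ lies in $\mathbb{I}_X\setminus\mathbb{I}_Y$, so $X_i\neq\bm{0}$ and $Y_i=\bm{0}$. Define $\tilde X$ by replacing the $i$-th column of $X$ with $\bm{0}$. Then:
\begin{itemize}
\item $\tilde X\in\mathcal{B}^X$, since zeroing a column keeps every column norm at most $\tau$;
\item $\tilde X Y^{\mathbb{T}}=\sum_{j}\tilde X_jY_j^{\mathbb{T}}=XY^{\mathbb{T}}$, because the removed term is $X_iY_i^{\mathbb{T}}=\bm{0}$;
\item $\text{nnzc}(\tilde X)=\text{nnzc}(X)-1$.
\end{itemize}
Hence $F(\tilde X,Y,S)=F(X,Y,S)-\lambda<F(X,Y,S)$, contradicting global optimality. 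The case $i\in\mathbb{I}_Y\setminus\mathbb{I}_X$ is symmetric, so $\mathbb{I}_X=\mathbb{I}_Y$.

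\emph{Part (ii).} Fix $(\bar X,\bar Y,\bar S)$ feasible for (\ref{s_l}). By the elementary lower semicontinuity of supports, there is $\epsilon>0$ such that every feasible $(X,Y,S)\in B_\epsilon(\bar X,\bar Y,\bar S)$ satisfies
\[
\mathbb{I}_X\supseteq\mathbb{I}_{\bar X},\quad \mathbb{I}_Y\supseteq\mathbb{I}_{\bar Y},\quad \mathbb{J}_S\supseteq\mathbb{J}_{\bar S}.
\]
This plays the role Weyl's inequality played in Proposition \ref{orig_rank_ps}. Within this ball, the regularizer is therefore constant on the restricted set ${\underline{\mathcal{B}}}^X_{\mathbb{I}_{\bar X}}\times{\underline{\mathcal{B}}}^Y_{\mathbb{I}_{\bar Y}}\times{\underline{\mathcal{B}}}^S_{\mathbb{J}_{\bar S}}$. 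Off that set, at least one support strictly grows, so the regularizer increases by at least $\min\{I_+(\lambda),I_+(\beta)\}$.

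\emph{Sufficiency.} Suppose $(\bar X,\bar Y,\bar S)$ locally minimizes $f(XY^{\mathbb{T}},S)$ on the restricted set. The map $(X,Y,S)\mapsto f(XY^{\mathbb{T}},S)$ is continuous, so after shrinking the radius we get
\[
f(\bar X\bar Y^{\mathbb{T}},\bar S)\le f(XY^{\mathbb{T}},S)+\min\{I_+(\lambda),I_+(\beta)\}
\]
on the ball. Points in the restricted set are then handled by the local minimality of $f$, and points outside it by the regularizer jump. So $(\bar X,\bar Y,\bar S)\in\mathcal{L}_{(\ref{s_l})}$.

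\emph{Necessity.} If $(\bar X,\bar Y,\bar S)\in\mathcal{L}_{(\ref{s_l})}$, restrict $F$ to the restricted set. On it, $F$ differs from $f(XY^{\mathbb{T}},S)$ only by a constant, so $(\bar X,\bar Y,\bar S)$ is a local minimizer of $f(XY^{\mathbb{T}},S)$ there.

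\emph{Part (iii).} Let $\phi(X,Y,S):=f(XY^{\mathbb{T}},S)$.
\begin{itemize}
\item By (ii) and \cite[Theorem 10.1, Exercise 10.10]{Rockafellar1998}, the product structure of the constraint set gives
\[
\bm{0}\in\partial\phi(\bar X,\bar Y,\bar S)+N_{{\underline{\mathcal{B}}}^X_{\mathbb{I}_{\bar X}}}(\bar X)\times N_{{\underline{\mathcal{B}}}^Y_{\mathbb{I}_{\bar Y}}}(\bar Y)\times N_{{\underline{\mathcal{B}}}^S_{\mathbb{J}_{\bar S}}}(\bar S).
\]
This uses only that $\phi$ is locally Lipschitz and the constraint set is closed.
\item Next apply a chain rule for the limiting subdifferential of $f\circ\Phi$, where $\Phi(X,Y,S)=(XY^{\mathbb{T}},S)$ is smooth and $f$ is locally Lipschitz. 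The Jacobian adjoint sends $(H_Z,H_S)$ to $(H_Z\bar Y,H_Z^{\mathbb{T}}\bar X,H_S)$. Hence $\partial\phi(\bar X,\bar Y,\bar S)$ is contained in
\[
\{(H_Z\bar Y,H_Z^{\mathbb{T}}\bar X,H_S):(H_Z,H_S)\in\partial f(\bar X\bar Y^{\mathbb{T}},\bar S)\},
\]
by \cite[Theorem 10.6]{Rockafellar1998}, or \cite[Theorem 6.23]{Penot2013} as in Proposition \ref{orig_rank_ps}.
\end{itemize}
Combining the two yields (\ref{loc-incl}).

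The main obstacle is this chain-rule inclusion. For the limiting subdifferential it holds as an inclusion, not an equality, when $\Phi$ is smooth and $f$ is locally Lipschitz, since the basic qualification is automatic for Lipschitz $f$. I would verify it by citing that theorem, with the Clarke chain rule as a fallback if needed.
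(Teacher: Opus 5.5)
Your proposal is correct and follows essentially the same route as the paper. In (i) you zero one unmatched column rather than all columns outside $\mathbb{I}_X\cap\mathbb{I}_Y$ at once, which is immaterial. Parts (ii) and (iii) match the paper's argument step for step: support stability near the point, the continuity bound $f(\bar X\bar Y^{\mathbb{T}},\bar S)\le f(XY^{\mathbb{T}},S)+\min\{I_{+}(\lambda),I_{+}(\beta)\}$ off the restricted set, then Fermat's rule with the Lipschitz sum rule, the product normal cone, and the limiting-subdifferential chain rule for $(X,Y,S)\mapsto f(XY^{\mathbb{T}},S)$.
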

\begin{proof}
	(i) For any $(X,Y,S)\in\mathcal{G}_{(\ref{s_l})}$, let $\mathbb{I}=\mathbb{I}_{X}\cap\mathbb{I}_{Y}$. If $\mathbb{I}_{X}\neq\mathbb{I}_{Y}$, then $F(\underline{X}_{\mathbb{I}},\underline{Y}_{\mathbb{I}},S)<F(X,Y,S)$, which makes a contradiction. Thus, $\mathbb{I}_{X}=\mathbb{I}_{Y}$.
	
	(ii) The statement obviously holds for the case $\lambda=\beta=0$, so we consider the case that $\lambda\neq0$ or $\beta\neq0$. For any given $(\bar{X},\bar{Y},\bar{S})\in\mathcal{B}:=\mathcal{B}^X\times\mathcal{B}^Y\times\mathcal{B}^S$, define $\underline{\mathcal{B}}:={\underline{\mathcal{B}}}^X_{\mathbb{I}_{\bar{X}}}\times{\underline{\mathcal{B}}}^Y_{\mathbb{I}_{\bar{Y}}}\times{\underline{\mathcal{B}}}^S_{\mathbb{J}_{\bar{S}}}$, it holds that there exists an $\varepsilon>0$ such that
	\begin{align}\label{IJ=}
		\mathbb{I}_{X}=\mathbb{I}_{\bar{X}},\mathbb{I}_{Y}=\mathbb{I}_{\bar{Y}}\text{ and }\mathbb{J}_{S}=\mathbb{J}_{\bar{S}},\,\forall (X,Y,S)\in B_{\varepsilon}(\bar{X},\bar{Y},\bar{S})\cap\underline{\mathcal{B}},
	\end{align}
	and hence
	\begin{align}\label{IJ-incl}
		\mathbb{I}_{X}\supsetneq\mathbb{I}_{\bar{X}},\mathbb{I}_{Y}\supsetneq\mathbb{I}_{\bar{Y}}\text{ or }\mathbb{J}_{S}\supsetneq\mathbb{J}_{\bar{S}},\,\forall(X,Y,S)\in B_{\varepsilon}(\bar{X},\bar{Y},\bar{S})\cap(\mathcal{B}\setminus\underline{\mathcal{B}}).
	\end{align}
	
	If $(\bar{X},\bar{Y},\bar{S})\in\mathcal{L}_{(\ref{s_l})}$, then there exists an $\varepsilon_1\in(0,\varepsilon]$ such that $F(\bar{X},\bar{Y},\bar{S})\leq F(X,Y,S)$, $\forall (X,Y,S)\in B_{\varepsilon_1}(\bar{X},\bar{Y},\bar{S})\cap\underline{\mathcal{B}}$. It follows from (\ref{IJ=}) that $f(\bar{X}\bar{Y}^{\mathbb{T}},\bar{S})\leq f(XY^{\mathbb{T}},S),\forall (X,Y,S)\in B_{\varepsilon_1}(\bar{X},\bar{Y},\bar{S})\cap\underline{\mathcal{B}}$,
	and hence 
	\begin{align}\label{loc1}
		\text{$(\bar{X},\bar{Y},\bar{S})$ is a local minimizer of $f(XY^{\mathbb{T}},S)$ on $\underline{\mathcal{B}}$.}
	\end{align}
	
	Conversely, consider that (\ref{loc1}) holds. Then there exists an $\varepsilon_2\in(0,\varepsilon]$ such that $f(\bar{X}\bar{Y}^{\mathbb{T}},\bar{S})\leq f(XY^{\mathbb{T}},S),$ $\forall (X,Y,S)\in B_{\varepsilon_2}(\bar{X},\bar{Y},\bar{S})\cap\underline{\mathcal{B}}$.
	By (\ref{IJ=}), we have
	\begin{equation}\label{ieqd4}
		F(\bar{X},\bar{Y},\bar{S})\leq F(X,Y,S),\forall (X,Y,S)\in B_{\varepsilon_2}(\bar{X},\bar{Y},\bar{S})\cap\underline{\mathcal{B}}.
	\end{equation}
	Considering the continuity of $f(XY^{\mathbb{T}},S)$, we have that there exists an $\varepsilon_3\in(0,\varepsilon_2]$ such that $f(\bar{X}\bar{Y}^{\mathbb{T}},\bar{S})\leq f(XY^{\mathbb{T}},S)+\min\{I_{+}(\lambda),I_{+}(\beta)\},\forall (X,Y,S)\in B_{\varepsilon_3}(\bar{X},\bar{Y},\bar{S})$. Further, by (\ref{IJ-incl}), for any $(X,Y,S)\in B_{\varepsilon_3}(\bar{X},\bar{Y},\bar{S})\cap(\mathcal{B}\setminus\underline{\mathcal{B}})$, we have that $F(\bar{X},\bar{Y},\bar{S})\leq \,f(XY^{\mathbb{T}},S)+\min\{I_{+}(\lambda),I_{+}(\beta)\}+\lambda\big(\text{nnzc}(\bar{X})+\text{nnzc}(\bar{Y})\big)+\beta\|\bar{S}\|_0
	\leq\,F(X,Y,S)$.
	Combining with (\ref{ieqd4}), we have $F(\bar{X},\bar{Y},\bar{S})\leq F(X,Y,S),\forall (X,Y,S)\in B_{\varepsilon_3}(\bar{X},\bar{Y},\bar{S})\cap\mathcal{B}$. As a result, $(\bar{X},\bar{Y},\bar{S})\in\mathcal{L}_{(\ref{s_l})}$.
	
	(iii) Let $(\bar{X},\bar{Y},\bar{S})\in\mathcal{L}_{(\ref{s_l})}$. By (ii), $(\bar{X},\bar{Y},\bar{S})$ is a local minimizer of $f(XY^{\mathbb{T}},S)$ on ${\underline{\mathcal{B}}}^X_{\mathbb{I}_{\bar{X}}}\times{\underline{\mathcal{B}}}^Y_{\mathbb{I}_{\bar{Y}}}\times{\underline{\mathcal{B}}}^S_{\mathbb{J}_{\bar{S}}}$. According to Fermat's rule in \cite[Theorem 10.1]{Rockafellar1998} and \cite[Exercise 10.10]{Rockafellar1998}, we get $\bm{0}\in\partial_{(X,Y,S)}f(\bar{X}\bar{Y}^{\mathbb{T}},\bar{S})+N_{{\underline{\mathcal{B}}}^X_{\mathbb{I}_{\bar{X}}}\times{\underline{\mathcal{B}}}^Y_{\mathbb{I}_{\bar{Y}}}\times{\underline{\mathcal{B}}}^S_{\mathbb{J}_{\bar{S}}}}(\bar{X},\bar{Y},\bar{S})$. Combining with \cite[Theorem 6.23]{Penot2013} and \cite[Proposition 10.5]{Rockafellar1998}, we obtain (\ref{loc-incl}).
\end{proof}

By Proposition \ref{s_l_props} (ii), it is not hard to deduce that the property (\ref{I-eq}) is a necessary condition for the global minimizers of problem (\ref{s_l}), but not for its all local minimizers in general.

\begin{definition}\label{sd_defs}
	\item[(i)] $(\bar{Z},\bar{S})\in\mathcal{B}^Z\times\mathcal{B}^S$ is called a stationary point of (\ref{m_s_l_b}) if it satisfies (\ref{lr_loc-incl}).
	\item[(ii)] $(\bar{X},\bar{Y},\bar{S})\in\mathcal{B}^X\times\mathcal{B}^Y\times\mathcal{B}^S$ is called a stationary point of (\ref{s_l}) if it satisfies (\ref{loc-incl}).\\
	The sets of all stationary points of (\ref{m_s_l_b}) and (\ref{s_l}) are denoted as $\mathcal{S}_{(\ref{m_s_l_b})}$ and $\mathcal{S}_{(\ref{s_l})}$, respectively.
\end{definition}

Note that problem (\ref{m_s_l_b}) with $\mathcal{B}^Z=\mathbb{R}^{m\times n}$, and problem (\ref{s_l}) with $\tau=+\infty$, correspond to problems (\ref{m_s_l}) and (\ref{s_l_nc}), respectively. Hence, Proposition \ref{orig_rank_ps}, Proposition \ref{s_l_props} and Definition \ref{sd_defs} are applicable to (\ref{m_s_l}) and (\ref{s_l_nc}). In what follows, we show the relations on local minimizers and stationary points between (\ref{m_s_l}) and (\ref{s_l_nc}).

\begin{theorem}\label{md_eq_lc_sd}
	Problems (\ref{m_s_l}) and (\ref{s_l_nc}) own the following relations on their local minimizers and stationary points.
	\begin{itemize}
		\item[(i)] $\mathcal{L}_{(\ref{m_s_l})}\subseteq\{(XY^{\mathbb{T}},S):(X,Y,S)\in\mathcal{L}_{(\ref{s_l_nc})}\}$ and $\mathcal{S}_{(\ref{m_s_l})}\subseteq\{(XY^{\mathbb{T}},S):(X,Y,S)\in\mathcal{S}_{(\ref{s_l_nc})}\}$.
		\item[(ii)] When $\lambda>0$, $\mathcal{S}_{(\ref{m_s_l})}=\{(XY^{\mathbb{T}},S):(X,Y,S)\in\mathcal{S}_{(\ref{s_l_nc})}\}$.
	\end{itemize}
\end{theorem}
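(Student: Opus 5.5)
The plan is to reduce both parts to explicit descriptions of the normal cones involved. For (\ref{s_l_nc}) we have $\tau=+\infty$, so ${\underline{\mathcal{B}}}^X_{\mathbb{I}_{\bar X}}$ and ${\underline{\mathcal{B}}}^Y_{\mathbb{I}_{\bar Y}}$ are linear subspaces (columns outside the index set vanish). Their normal cones are the matrices supported on the complementary columns. Hence the first two inclusions in (\ref{loc-incl}) simply say
\[
(H_Z\bar Y)_{\mathbb{I}_{\bar X}}=\bm{0},\qquad (H_Z^{\mathbb{T}}\bar X)_{\mathbb{I}_{\bar Y}}=\bm{0}.
\]
The $S$-components of (\ref{lr_loc-incl}) and (\ref{loc-incl}) coincide, so only the $Z$-part needs work.

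For the rank side with $\mathcal{B}^Z=\mathbb{R}^{m\times n}$ there are two cases.
\begin{itemize}
\item When $\lambda>0$, $\mathcal{B}^Z_{\lambda,\bar Z}$ is the smooth manifold of matrices of rank $r:=\text{rank}(\bar Z)$, since the constraint $\text{rank}\le d$ is then automatic. Its normal cone at $\bar Z$ is $\{W: U_{\bar Z}^{\mathbb{T}}W=\bm{0},\,WV_{\bar Z}=\bm{0}\}$.
\item When $\lambda=0$, $\mathcal{B}^Z_{0,\bar Z}=\{Z:\text{rank}(Z)\le d\}$. For this set I will invoke the known formula for the limiting normal cone of the bounded-rank variety. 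I only need the consequence that every $W\in N(\bar Z)$ satisfies $W\bar Z^{\mathbb{T}}=\bm{0}$ and $\bar Z^{\mathbb{T}}W=\bm{0}$.
\end{itemize}
Establishing or citing this inclusion in the $\lambda=0$ case is the step I expect to be the main technical obstacle. In both cases, $W\bar Z^{\mathbb{T}}=\bm{0}$ is equivalent to $WV_{\bar Z}=\bm{0}$, and $\bar Z^{\mathbb{T}}W=\bm{0}$ is equivalent to $U_{\bar Z}^{\mathbb{T}}W=\bm{0}$.

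For (i), take $(\bar Z,\bar S)$ in $\mathcal{L}_{(\ref{m_s_l})}$ or $\mathcal{S}_{(\ref{m_s_l})}$ and set $r=\text{rank}(\bar Z)\le d$. Construct the balanced factors $\bar X=[U_{\bar Z}\Sigma_{\bar Z}^{1/2},\bm{0}]$ and $\bar Y=[V_{\bar Z}\Sigma_{\bar Z}^{1/2},\bm{0}]$, padded with zero columns to width $d$. Then $\bar X\bar Y^{\mathbb{T}}=\bar Z$, and when $\lambda>0$ we have $\mathbb{I}_{\bar X}=\mathbb{I}_{\bar Y}=[r]$.

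For local minimizers I use Proposition \ref{s_l_props}(ii), so it suffices to show that $(\bar X,\bar Y,\bar S)$ locally minimizes $f(XY^{\mathbb{T}},S)$ on the restricted sets. For $(X,Y)$ near $(\bar X,\bar Y)$ with the same column supports, $XY^{\mathbb{T}}$ is near $\bar Z$.
\begin{itemize}
\item If $\lambda>0$, then $XY^{\mathbb{T}}$ has rank at most $r$, and by Weyl's inequality at least $r$, so it lies in $\mathcal{B}^Z_{\lambda,\bar Z}$.
\item If $\lambda=0$, then $XY^{\mathbb{T}}$ has rank at most $d$ trivially.
\end{itemize}
Continuity of $(X,Y)\mapsto XY^{\mathbb{T}}$ together with Proposition \ref{orig_rank_ps}(i) then yields $f(\bar Z,\bar S)\le f(XY^{\mathbb{T}},S)$.

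For stationary points, take the same $(H_Z,H_S)$. From $H_ZV_{\bar Z}=\bm{0}$ we get $H_Z\bar Y=\bm{0}$, and from $U_{\bar Z}^{\mathbb{T}}H_Z=\bm{0}$ we get $H_Z^{\mathbb{T}}\bar X=\bm{0}$. Hence (\ref{loc-incl}) holds for either index set, whether it is $[r]$ or $[d]$.

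For (ii), with $\lambda>0$, it remains to prove the reverse inclusion. Let $(\bar X,\bar Y,\bar S)\in\mathcal{S}_{(\ref{s_l_nc})}$ with multiplier $(H_Z,H_S)$, and set $\bar Z=\bar X\bar Y^{\mathbb{T}}$. Because $\lambda>0$, the columns of $\bar Y$ outside $\mathbb{I}_{\bar Y}$ are zero. So $(H_Z\bar Y)_{\mathbb{I}_{\bar X}}=\bm{0}$ should be upgraded to $H_Z\bar Y=\bm{0}$; the point to check here is how the mismatch between $\mathbb{I}_{\bar X}$ and $\mathbb{I}_{\bar Y}$ is handled. Columns in $\mathbb{I}_{\bar Y}\setminus\mathbb{I}_{\bar X}$ correspond to zero columns of $\bar X$. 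I therefore expect to argue with the products directly:
\[
H_Z\bar Z^{\mathbb{T}}=\sum_{i\in\mathbb{I}_{\bar X}\cap\mathbb{I}_{\bar Y}}(H_Z\bar Y_i)\bar X_i^{\mathbb{T}}=\bm{0},
\]
since $H_Z\bar Y_i=\bm{0}$ for every $i\in\mathbb{I}_{\bar X}$. Symmetrically, $\bar Z^{\mathbb{T}}H_Z=\bm{0}$. These give $H_ZV_{\bar Z}=\bm{0}$ and $U_{\bar Z}^{\mathbb{T}}H_Z=\bm{0}$. Hence $-H_Z$ lies in the normal space to the fixed-rank manifold $\mathcal{B}^Z_{\lambda,\bar Z}$, and $(\bar Z,\bar S)\in\mathcal{S}_{(\ref{m_s_l})}$. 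Combined with (i), this gives the equality.
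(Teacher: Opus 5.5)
Your proposal is correct, and for the stationary-point claims it follows the paper's proof essentially step for step. It uses the same balanced factors $\bar X=[U_{\bar Z}\Sigma_{\bar Z}^{1/2},\bm{0}]$ and $\bar Y=[V_{\bar Z}\Sigma_{\bar Z}^{1/2},\bm{0}]$, the same cited inclusion of the bounded-rank normal cone into $\{W:U_{\bar Z}^{\mathbb{T}}W=\bm{0},\,WV_{\bar Z}=\bm{0}\}$ when $\lambda=0$, and in (ii) the same identities $H_Z\bar Z^{\mathbb{T}}=\bm{0}$ and $\bar Z^{\mathbb{T}}H_Z=\bm{0}$, obtained by summing only over $\mathbb{I}_{\bar X}\cap\mathbb{I}_{\bar Y}$. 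Summing over the intersection is the right fix, since the tentative ``upgrade'' to $H_Z\bar Y=\bm{0}$ can fail on columns in $\mathbb{I}_{\bar Y}\setminus\mathbb{I}_{\bar X}$.

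The one variation is the local-minimizer inclusion in (i). You go through the restricted-$f$ characterizations of Propositions~\ref{orig_rank_ps}(i) and \ref{s_l_props}(ii) together with a Weyl argument. The paper instead compares objective values directly: $F(\bar X,\bar Y,\bar S)=F_0(\bar Z,\bar S)\le F_0(XY^{\mathbb{T}},S)\le F(X,Y,S)$, using $\text{rank}(XY^{\mathbb{T}})\le\frac{1}{2}(\text{nnzc}(X)+\text{nnzc}(Y))$ and continuity of $(X,Y)\mapsto XY^{\mathbb{T}}$. That route needs no support bookkeeping, though both arguments are valid.
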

\begin{proof}
	(i) For any given $(\bar{Z},\bar{S})\in\mathcal{L}_{(\ref{m_s_l})}$, there exists an $\epsilon>0$ such that (\ref{lcmn_ieq}) holds with $\mathcal{B}^Z=\mathbb{R}^{m\times n}$, and it is not hard to construct $(\bar{X},\bar{Y})\in\mathbb{R}^{m\times d}\times\mathbb{R}^{n\times d}$ such that $\text{nnzc}(\bar{X})=\text{nnzc}(\bar{Y})=\text{rank}(\bar{Z})$ and $\bar{X}\bar{Y}^{\mathbb{T}}=\bar{Z}$. By the continuity of $\|XY^{\mathbb{T}}\|_F$, there exists an $\epsilon_r\in(0,\epsilon)$ such that $XY^{\mathbb{T}}\in B_{\epsilon}(\bar{Z}),\forall (X,Y)\in B_{\epsilon_r}(\bar{X},\bar{Y})$. Combining with (\ref{lcmn_ieq}) and $\text{rank}(XY^{\mathbb{T}})\leq d$, we have $F(\bar{X},\bar{Y},\bar{S})=F_0(\bar{Z},\bar{S})\leq F_0(XY^{\mathbb{T}},S)\leq F(X,Y,S),\forall (X,Y,S)\in B_{\epsilon_r}(\bar{X},\bar{Y})\times (B_{\epsilon-\epsilon_r}(\bar{S})\cap\mathcal{B}^S)$. Thus, $(\bar{X},\bar{Y},\bar{S})\in\mathcal{L}_{(\ref{s_l_nc})}$.
	
	For any $\bar{Z}\in\mathbb{R}^{m\times n}$ with $\text{rank}(\bar{Z})\leq d$, let $U_{\bar{Z}}^{\bot}\in\mathbb{R}^{m\times(m-d_{\bar{Z}})}$ and $V_{\bar{Z}}^{\bot}\in\mathbb{R}^{n\times(n-d_{\bar{Z}})}$ satisfy that $[U_{\bar{Z}},U_{\bar{Z}}^{\bot}]$ and $[V_{\bar{Z}},V_{\bar{Z}}^{\bot}]$ are orthogonal matrices. Set $N_{\bar{Z}}:=\big\{U_{\bar{Z}}^{\bot}M(V_{\bar{Z}}^{\bot})^{\mathbb{T}}:M\in\mathbb{R}^{(m-d_{\bar{Z}})\times(n-d_{\bar{Z}})}\big\}$. By the definition of $\mathcal{B}^Z_{\lambda,\bar{Z}}$ in Proposition \ref{orig_rank_ps}, we have that $\mathcal{B}^Z_{\lambda,\bar{Z}}=\{Z\in\mathcal{B}^Z:\text{rank}(Z)= \text{rank}(\bar{Z})\}$ if $\lambda>0$, and $\mathcal{B}^Z_{\lambda,\bar{Z}}=\{Z\in\mathcal{B}^Z:\text{rank}(Z)\leq d\}$ if $\lambda=0$. It follows from \cite[Proposition 3.6]{luke2013prox} that for any $\bar{Z}\in\mathbb{R}^{m\times n}$ with $\text{rank}(\bar{Z})\leq d$,
	\begin{equation}\label{normalc}
		\text{$N_{\mathcal{B}^Z_{\lambda,\bar{Z}}}(\bar{Z})=N_{\bar{Z}}$ if $\lambda>0$, and $N_{\mathcal{B}^Z_{\lambda,\bar{Z}}}(\bar{Z})\subseteq N_{\bar{Z}}$ if $\lambda=0$.}
	\end{equation}
	
	Next, consider $(\bar{Z},\bar{S})\in\mathcal{S}_{(\ref{m_s_l})}$. Define $\bar{X}:=[U_{\bar{Z}}\Sigma_{\bar{Z}}^{1/2},\bm{0}^{m\times(d-d_{\bar{Z}})}]$, $\bar{Y}:=[V_{\bar{Z}}\Sigma_{\bar{Z}}^{1/2},$ $\bm{0}^{n\times(d-d_{\bar{Z}})}]$. Combining (\ref{normalc}) and (\ref{lr_loc-incl}), there exist $(H_Z,H_S)\in\partial_{(Z,S)}f(\bar{Z},\bar{S})$ and $\bar{M}\in\mathbb{R}^{(m-d_{\bar{Z}})\times(n-d_{\bar{Z}})}$ such that $\bm{0}=H_Z+U_{\bar{Z}}^{\bot}\bar{M}(V_{\bar{Z}}^{\bot})^{\mathbb{T}}$ and $\bm{0}\in H_S+N_{\underline{\mathcal{B}}^S_{\mathbb{J}_{\bar{S}}}}(\bar{S})$. Further, by multiplying $\bar{X}^{\mathbb{T}}$ on the first equation from the left and $\bar{Y}$ on it from the right, respectively, we get $H_Z^{\mathbb{T}}\bar{X}=\bm{0}$ and $H_Z\bar{Y}=\bm{0}$. Combining with $\mathcal{B}^X=\mathbb{R}^{m\times d}$, $\mathcal{B}^Y=\mathbb{R}^{n\times d}$ and $\bm{0}\in H_S+N_{\underline{\mathcal{B}}^S_{\mathbb{J}_{\bar{S}}}}(\bar{S})$, we get (\ref{loc-incl}). Thus, $(\bar{X},\bar{Y},\bar{S})\in\mathcal{S}_{(\ref{s_l_nc})}$.
	
	(ii) Consider $(\bar{X},\bar{Y},\bar{S})\in\mathcal{S}_{(\ref{s_l_nc})}$ and $\lambda>0$. Based on $\mathcal{B}^X=\mathbb{R}^{m\times d}$, $\mathcal{B}^Y=\mathbb{R}^{n\times d}$ and (\ref{loc-incl}), there exists an $(H_Z,H_S)\in\partial_{(Z,S)}f(\bar{X}\bar{Y}^{\mathbb{T}},\bar{S})$ such that $\bm{0}=H_Z\bar{Y}_{\mathbb{I}_{\bar{X}}\cap\mathbb{I}_{\bar{Y}}}$, $\bm{0}=H_Z^{\mathbb{T}}\bar{X}_{\mathbb{I}_{\bar{X}}\cap\mathbb{I}_{\bar{Y}}}$ and $-H_S\in N_{{\underline{\mathcal{B}}}^S_{\mathbb{J}_{\bar{S}}}}(\bar{S})$. Let $\bar{Z}=\bar{X}\bar{Y}^{\mathbb{T}}$ and $\bar{d}=\text{rank}(\bar{Z})$. Then, $\bar{Z}=\bar{X}_{\mathbb{I}_{\bar{X}}\cap\mathbb{I}_{\bar{Y}}}\bar{Y}_{\mathbb{I}_{\bar{X}}\cap\mathbb{I}_{\bar{Y}}}^{\mathbb{T}}$ and hence $\text{rank}(\bar{X}_{\mathbb{I}_{\bar{X}}\cap\mathbb{I}_{\bar{Y}}})\geq\bar{d}$, $\text{rank}(\bar{Y}_{\mathbb{I}_{\bar{X}}\cap\mathbb{I}_{\bar{Y}}})\geq\bar{d}$, $\text{rank}(H_Z)\leq \min\{m,n\}-\bar{d}$, $\bar{Z}H_Z^{\mathbb{T}}=\bar{X}_{\mathbb{I}_{\bar{X}}\cap\mathbb{I}_{\bar{Y}}}(H_Z\bar{Y}_{\mathbb{I}_{\bar{X}}\cap\mathbb{I}_{\bar{Y}}})^{\mathbb{T}}=\bm{0}$ and $H_Z^{\mathbb{T}}\bar{Z}=(H_Z^{\mathbb{T}}\bar{X}_{\mathbb{I}_{\bar{X}}\cap\mathbb{I}_{\bar{Y}}})$ $\bar{Y}^{\mathbb{T}}_{\mathbb{I}_{\bar{X}}\cap\mathbb{I}_{\bar{Y}}}=\bm{0}$. Denote the compact SVDs of $\bar{Z}$ and $H_Z$ as $\bar{Z}=U_{\bar{Z}}\Sigma_{\bar{Z}}V_{\bar{Z}}^{\mathbb{T}}$ and $H_Z=U_{H_Z}\Sigma_{H_Z}V_{H_Z}^{\mathbb{T}}$, respectively. Combining with $\bar{Z}H_Z^{\mathbb{T}}=\bm{0}$, $H_Z^{\mathbb{T}}\bar{Z}=\bm{0}$, the orthogonality of $U_{\bar{Z}},V_{\bar{Z}},U_{H_Z}$, $V_{H_Z}$, and the invertibility of $\Sigma_{\bar{Z}},\Sigma_{H_Z}$, we deduce that $V_{\bar{Z}}^{\mathbb{T}}V_{H_Z}=\bm{0}$ and $U_{H_Z}^{\mathbb{T}}U_{\bar{Z}}=\bm{0}$. Since $\text{rank}(H_Z)\leq \min\{m,n\}-\bar{d}$, the column numbers of $U_{H_Z}$ and $V_{H_Z}$ are not more than $\min\{m,n\}-\bar{d}$, which implies that $-H_Z\in N_{\mathcal{B}^Z_{\lambda,\bar{Z}}}(\bar{Z})$ by (\ref{normalc}). Combining with $-H_S\in N_{{\underline{\mathcal{B}}}^S_{\mathbb{J}_{\bar{S}}}}(\bar{S})$, we obtain that $(\bar{Z},\bar{S})\in\mathcal{S}_{(\ref{m_s_l})}$. This together with the second statement of (i) yields that $\mathcal{S}_{(\ref{m_s_l})}=\{(XY^{\mathbb{T}},S):(X,Y,S)\in\mathcal{S}_{(\ref{s_l_nc})}\}$.
\end{proof}

\begin{remark}
	Proposition \ref{gls_mf} and Theorem \ref{md_eq_lc_sd} show the relations between problems (\ref{m_s_l}) and (\ref{s_l_nc}) in the sense of global minimizers, local minimizers, and stationary points. This implies that the double equivalence on global minimizers and stationary points is applicable to problem (\ref{m_s_l}) with $\lambda>0$ and $\beta\geq0$, which includes the rank regularized problem $\min_{Z:\,\emph{\text{rank}}(Z)\leq d}\{f(Z)+\lambda\,\emph{\text{rank}}(Z)\}$ in \cite{Li2024} as a special case.
\end{remark}

By Theorem \ref{md_eq_lc_sd} (ii), it is not hard to check that when $\lambda>0$, if $(\bar{X},\bar{Y},\bar{S})\in\mathcal{S}_{(\ref{s_l})}$ satisfies $(\bar{X}_{\mathbb{I}_{\bar{X}}},\bar{Y}_{\mathbb{I}_{\bar{Y}}})\in\emph{\emph{int}}(\mathcal{B}^X_{\mathbb{I}_{\bar{X}}}\times\mathcal{B}^Y_{\mathbb{I}_{\bar{Y}}})$, then $(\bar{X}\bar{Y}^{\mathbb{T}},\bar{S})\in\mathcal{S}_{(\ref{m_s_l})}$. Based on Proposition \ref{gls_mf} and Theorem \ref{md_eq_lc_sd}, some relations between problem (\ref{m_s_l}) and its factorization problem (\ref{s_l_nc}) are as shown in Figure\,\ref{model-rela2}.
\begin{figure}
	\centering
	\includegraphics[width=4.3in]{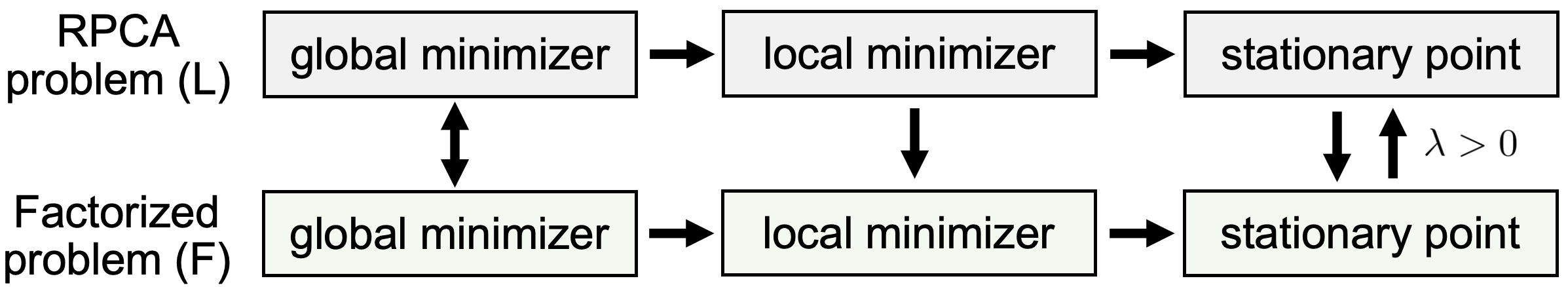}
	\caption{Relations between problems (\ref{m_s_l}) and (\ref{s_l_nc}).}\label{model-rela2}
\end{figure}

\section{Equivalent continuous relaxations of problem (\ref{s_l})}\label{section 3}
For some common matrix recovery problems, such as image restoration, score sheet completion and collaborative filtering, the feasible regions of the involved optimization problems are usually bounded from their practical background. Combining with the fact that the low-rank structure provides computational benefits for the matrix factorization model, we focus on the constrained problem (\ref{s_l}) in this section. We construct an equivalent continuous nonconvex relaxation framework for (\ref{s_l}) and mention the boundedness requirement of its feasible region in theoretical analysis when appropriate. Under this framework, all relaxation problems of (\ref{s_l}) own the same global minimizers and optimal value. Furthermore, we analyze the optimality conditions of these relaxation problems and establish the equivalence between (\ref{s_l}) and these relaxations in the sense of strong stationary points defined in Definition \ref{ssp_defs}.

Let $\theta:\mathbb{R}_+\rightarrow\mathbb{R}_+$ be any given function satisfying
\begin{equation}\label{theta_cond}
	\theta(0)=0,\,\,\theta(t)\in[0,1],\forall t\in(0,1),\,\,\theta(t)=1,\forall t\in[1,+\infty).
\end{equation}
Its function curve diagram is shown in Figure\,\ref{theta_fig}.
\begin{figure}[!t]
	\centering
	\includegraphics[width=1.7in]{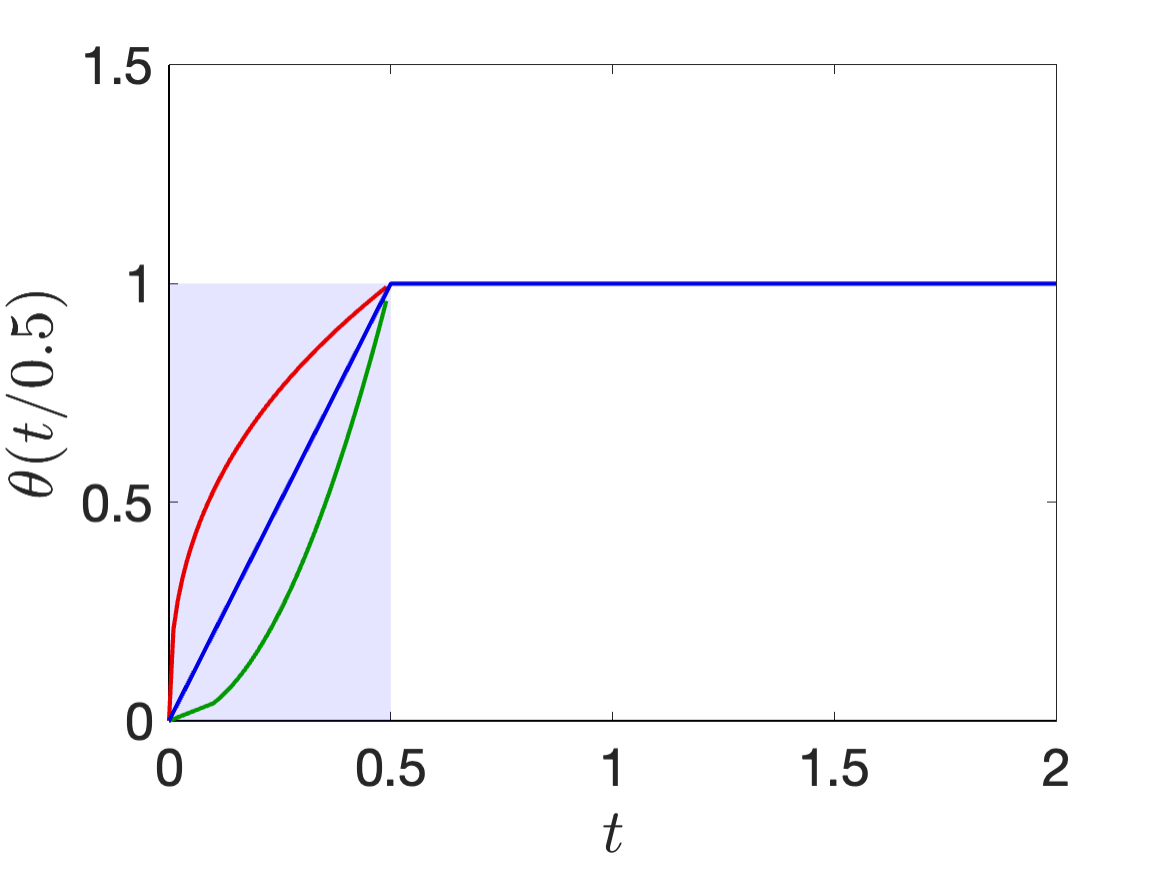}
	\hfil
	\includegraphics[width=1.7in]{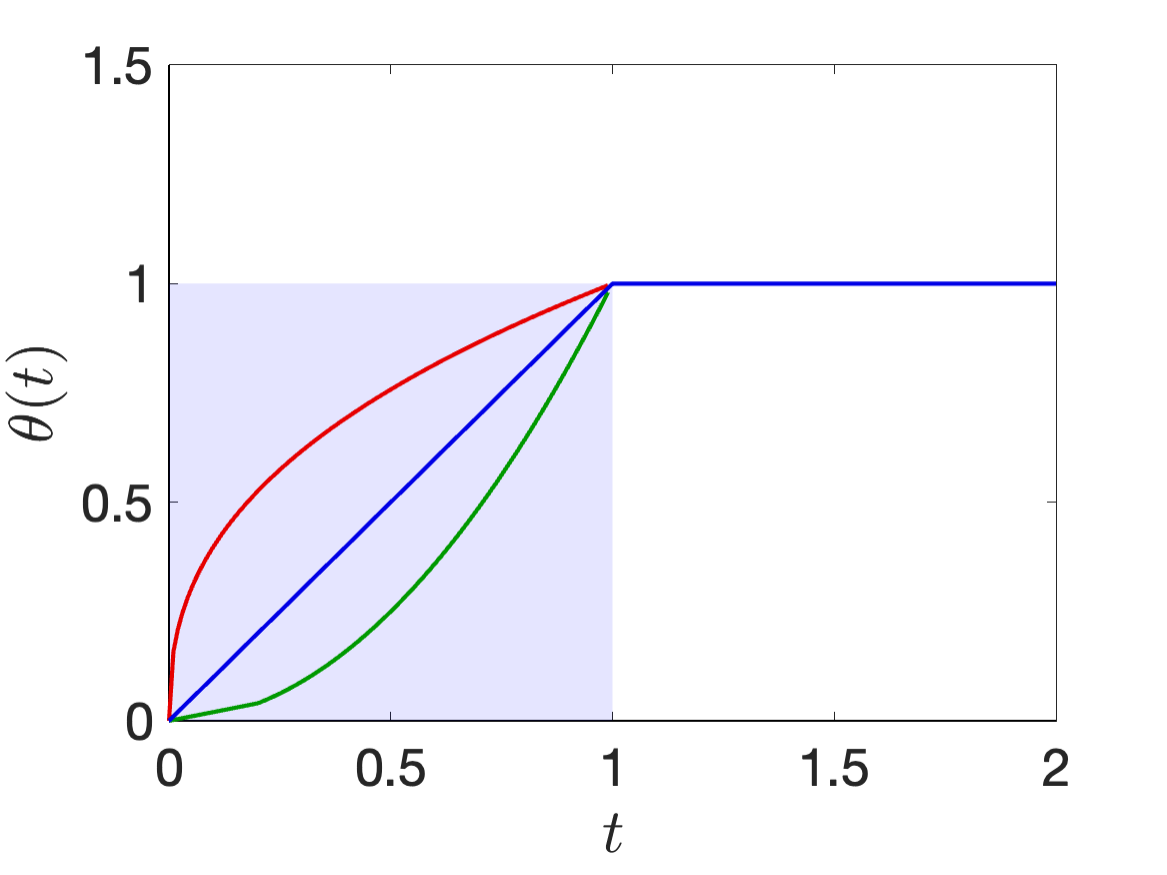}
	\hfil
	\includegraphics[width=1.7in]{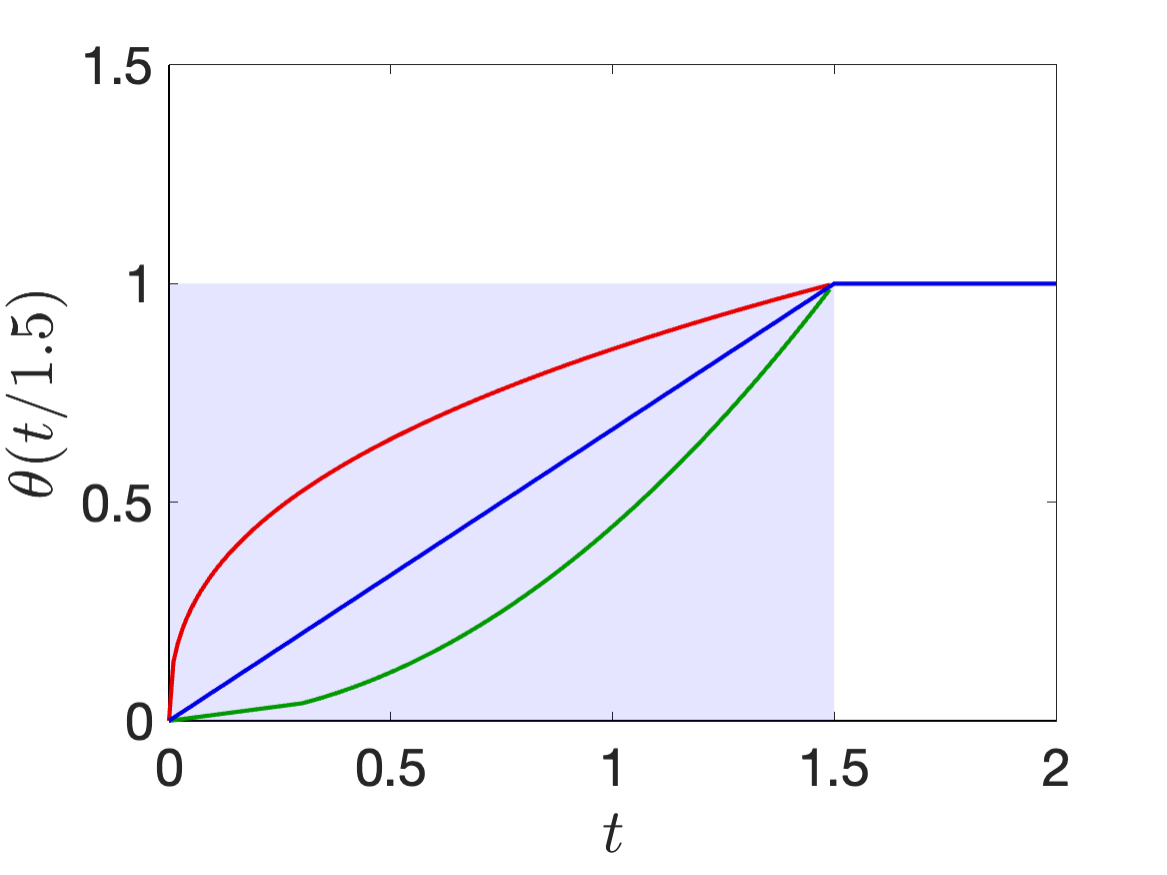}
	\caption{Function curve diagrams for $\theta(\cdot)$ satisfying (\ref{theta_cond}) and its scaled forms $\theta(\cdot/0.5)$ and $\theta(\cdot/1.5)$, where the red, blue and green curves represent three applicable alternatives to $\theta$ when $t\in(0,1)$.}
	\label{theta_fig}
\end{figure}

Relying on the given function $\theta$ and parameter $\rho>0$, we define $\Theta^\rho:\mathbb{R}^{r\times d}\rightarrow\mathbb{R}_+$ and $\tilde\Theta^\rho:\mathbb{R}^{m\times n}\rightarrow\mathbb{R}_+$ as
\begin{equation}\label{Theta-def}
	\Theta^\rho(V):=\sum_{i=1}^d\theta(\|V_i\|_2/\rho)\mbox{ and }\tilde\Theta^\rho(S):=\sum_{i=1}^m\sum_{j=1}^n\theta(|S_{ij}|/\rho).
\end{equation}
Obviously, $\lim_{\rho\downarrow0}\Theta^\rho(V)=\text{nnzc}(V),\,\forall V\in\mathbb{R}^{r\times d}$ and $\lim_{\rho\downarrow0}\tilde\Theta^\rho(S)=\|S\|_0,\,\forall S\in\mathbb{R}^{m\times n}$.

For the given function $\theta$ and parameters $r,s>0$, define the function $F_{\theta}^{r,s}:\mathbb{R}^{m\times d}\times\mathbb{R}^{n\times d}\times\mathbb{R}^{m\times n}\rightarrow\mathbb{R}_+$ as $$F_{\theta}^{r,s}(X,Y,S):=f(XY^{\mathbb{T}},S)+\lambda\big(\Theta^{r}(X)+\Theta^{r}(Y)\big)+\beta\tilde\Theta^{s}(S).$$
Correspondingly, we consider the following  relaxation of problem (\ref{s_l}),
\begin{equation}\label{s_l_r}
	\tag{FR}
	\min_{(X,Y,S)\in \mathcal{B}}F_{\theta}^{r,s}(X,Y,S).
\end{equation}

To highlight various choices of parameters $r$, $s$ and function $\theta$ in problem (\ref{s_l_r}), we represent its optimal value and global solution set as $V^{\theta:r,s}_{(\ref{s_l_r})}$ and $\mathcal{G}^{\theta:r,s}_{(\ref{s_l_r})}$, respectively. Next, we show the equivalence on the global minimizers between problems (\ref{s_l}) and (\ref{s_l_r}) under the following assumption. 
\begin{assumption}\label{glob_para}
	For any $(X,Y,S)\in\mathcal{G}^{\theta:r,s}_{(\ref{s_l_r})}$, it holds that
	\begin{equation}\label{glb_prp}
		\tag{C2}
		\|X_i\|_2,\|Y_i\|_2\notin(0,r),\,\forall\, i\in[d]\text{ if }\lambda>0,\text{ and }|S_{ij}|\notin(0,s),\,\forall\, (i,j)\in[m]\times[n]\text{ if }\beta>0.
	\end{equation}
\end{assumption}
\begin{proposition}\label{glob_eqv}
	If Assumption \ref{glob_para} holds, one has that 
	\begin{equation*}
		\text{$\mathcal{G}_{(\ref{s_l})}=\mathcal{G}^{\theta:r,s}_{(\ref{s_l_r})}$ and $V_{(\ref{s_l})}=V^{\theta:r,s}_{(\ref{s_l_r})}$.}
	\end{equation*}
\end{proposition}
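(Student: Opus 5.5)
The plan rests on two pointwise comparisons between $F$ and $F_{\theta}^{r,s}$ on the feasible set $\mathcal{B}:=\mathcal{B}^X\times\mathcal{B}^Y\times\mathcal{B}^S$. First, by (\ref{theta_cond}), $\theta(t)\le\vartheta(t)$ for all $t\ge0$: both vanish at $0$, $\theta\le1=\vartheta$ on $(0,1)$, and both equal $1$ on $[1,\infty)$. Hence $\theta(\|V_i\|_2/\rho)\le\vartheta(\|V_i\|_2)$ and $\theta(|S_{ij}|/\rho)\le\vartheta(|S_{ij}|)$. Summing, and using $\lambda,\beta\ge0$, gives
\[
F_{\theta}^{r,s}(X,Y,S)\le F(X,Y,S)\quad\text{for all }(X,Y,S)\in\mathcal{B},
\]
so $V^{\theta:r,s}_{(\ref{s_l_r})}\le V_{(\ref{s_l})}$. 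Second, equality holds at points satisfying (\ref{glb_prp}). If $\lambda>0$ and $\|X_i\|_2\notin(0,r)$, then either $X_i=\bm{0}$, where both terms are $0$, or $\|X_i\|_2\ge r$, where $\theta(\|X_i\|_2/r)=1=\vartheta(\|X_i\|_2)$. The same holds for $Y$ and, when $\beta>0$, for $S$ with threshold $s$. When $\lambda=0$ or $\beta=0$ the corresponding terms drop out regardless. So $F_{\theta}^{r,s}=F$ at every point of $\mathcal{B}$ satisfying (\ref{glb_prp}).

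The argument then proceeds in four steps.

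1. Existence of a relaxed minimizer. Take $(\bar X,\bar Y,\bar S)\in\mathcal{G}^{\theta:r,s}_{(\ref{s_l_r})}$. Nonemptiness should follow from continuity of $f$ and boundedness of $\mathcal{B}$, or is implicit in the assumption. A subtle point is that $\theta$ need not be lower semicontinuous, so existence may have to be taken as implicit in Assumption \ref{glob_para}; the proof should note this.

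2. Optimal values coincide. By Assumption \ref{glob_para}, $(\bar X,\bar Y,\bar S)$ satisfies (\ref{glb_prp}). Therefore
\[
V_{(\ref{s_l})}\le F(\bar X,\bar Y,\bar S)=F_{\theta}^{r,s}(\bar X,\bar Y,\bar S)=V^{\theta:r,s}_{(\ref{s_l_r})}\le V_{(\ref{s_l})}.
\]
This gives equality of the optimal values and shows $\mathcal{G}^{\theta:r,s}_{(\ref{s_l_r})}\subseteq\mathcal{G}_{(\ref{s_l})}$.

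3. Reverse inclusion. Take $(X,Y,S)\in\mathcal{G}_{(\ref{s_l})}$. Then
\[
F_{\theta}^{r,s}(X,Y,S)\le F(X,Y,S)=V_{(\ref{s_l})}=V^{\theta:r,s}_{(\ref{s_l_r})},
\]
so $(X,Y,S)$ is a global minimizer of (\ref{s_l_r}). This also settles existence for (\ref{s_l}) once the relaxed problem attains its minimum.

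4. Main obstacle. The main difficulty is existence of a minimizer in step 1. If it is not implicit, I would argue via the lower semicontinuous envelope, or simply note that if $\mathcal{G}^{\theta:r,s}_{(\ref{s_l_r})}=\emptyset$ the value statement needs care. I expect the intended reading is that minimizers exist, for example because the feasible set is compact and $\theta$ is chosen lower semicontinuous.
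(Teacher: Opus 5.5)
Your argument is correct and is essentially the paper's proof. The paper also uses $\theta\le\vartheta$ to get $F_{\theta}^{r,s}\le F$ on $\mathcal{B}$, equality on the set $\mathcal{D}$ of feasible points satisfying (\ref{glb_prp}), and Assumption \ref{glob_para} to put relaxed minimizers in $\mathcal{D}$, then obtains the two inclusions exactly as in your steps 2 and 3.

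Your existence caveat is well founded. The paper tacitly assumes $\mathcal{G}^{\theta:r,s}_{(\ref{s_l_r})}\neq\emptyset$ when it writes $V^{\theta:r,s}_{(\ref{s_l_r})}=\min_{W\in\mathcal{D}}F_{\theta}^{r,s}(W)$. Without attainment the assumption holds vacuously and the conclusion can fail: take $m=n=d=1$, $\beta=0$, $\lambda=r=1$, $\tau\ge1$, $f(z,s)=(z-1)^2$ and $\theta(t)=1-t$ on $(0,1)$; the relaxed value $0$ is not attained, yet $V_{(\ref{s_l})}=1$. So attainment should simply be added as a hypothesis; an lsc-envelope detour would not rescue the statement.
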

\begin{proof}
	Use the symbol $W$ to abbreviate $(X,Y,S)$ and define $\mathcal{D}:=\{W\in \mathcal{B}:W\text{ satisfies }(\ref{glb_prp})\}$. Recall that $\vartheta$ is defined as (\ref{c_def}). Since $\theta(t)\leq\vartheta(t),\forall t\in\mathbb{R}_+$, we have $F_{\theta}^{r,s}(W)\leq F(W),\forall W\in\mathcal{B}$. Combining with $\mathcal{D}\subseteq\mathcal{B}$, we get 
	\begin{equation}\label{mobj_ieq}
		V^{\theta:r,s}_{(\ref{s_l_r})}\leq V_{(\ref{s_l})}\leq\min_{W\in\mathcal{D}}F(W).
	\end{equation}
	Due to Assumption \ref{glob_para}, we obtain that $\mathcal{G}^{\theta:r,s}_{(\ref{s_l_r})}\subseteq\mathcal{D}$ and $F_{\theta}^{r,s}(W)=F(W),\forall W\in\mathcal{D}$. Combining with $\mathcal{D}\subseteq\mathcal{B}$ and (\ref{mobj_ieq}), we have
	\begin{align}\label{mobj_ieq2}
		V^{\theta:r,s}_{(\ref{s_l_r})}=\min_{W\in\mathcal{D}}F_{\theta}^{r,s}(W)=\min_{W\in\mathcal{D}}F(W)=V_{(\ref{s_l})}.
	\end{align}
	Then we deduce that $\mathcal{G}^{\theta:r,s}_{(\ref{s_l_r})}=\text{argmin}_{W\in\mathcal{D}}F_{\theta}^{r,s}(W)=\text{argmin}_{W\in\mathcal{D}}F(W)\subseteq\mathcal{G}_{(\ref{s_l})}$. Moreover, by (\ref{mobj_ieq2}) and $F_{\theta}^{r,s}(W)\leq F(W),\forall W\in\mathcal{B}$, we have $\mathcal{G}_{(\ref{s_l})}\subseteq\mathcal{G}^{\theta:r,s}_{(\ref{s_l_r})}$. In conclusion, $\mathcal{G}_{(\ref{s_l})}=\mathcal{G}^{\theta:r,s}_{(\ref{s_l_r})}$.
\end{proof}

It is not hard to verify that if $\theta:\mathbb{R}_+\rightarrow\mathbb{R}_+$ satisfies Assumption \ref{glob_para}, then for any $\bar\theta:\mathbb{R}_+\rightarrow\mathbb{R}_+$ satisfying
\begin{equation*}
	\theta(t)\leq\bar\theta(t)\leq\vartheta(t),\,\forall t\in\mathbb{R}_+,\,\text{with $\vartheta$ defined as (\ref{c_def}),}
\end{equation*}
one has that $\mathcal{G}_{(\ref{s_l})}=\mathcal{G}^{\bar\theta:r,s}_{(\ref{s_l_r})}$ and $V_{(\ref{s_l})}=V^{\bar\theta:r,s}_{(\ref{s_l_r})}$. Thus, based on an appropriate $\theta$ as in Assumption \ref{glob_para}, we can get an infinite variety of equivalent relaxation problems for problem (\ref{s_l}).

Define functions $\theta_1,\theta_2:\mathbb{R}_+\rightarrow\mathbb{R}_+$ as follows:
\begin{equation}\label{theta1_def}
	\theta_1(t)=t,\forall t\in\{0,1\},\,\,\theta_1(t)\in(0,1),\forall\, t\in(0,1),\,\,\theta_1(t)>1,\forall t\in(1,+\infty),
\end{equation}
and $\theta_2(t)=1,\forall\, t\in\mathbb{R}_+$.
Note that relaxation function $\theta$ satisfying (\ref{theta_cond}) can be equivalently expressed in the following form
\begin{equation}\label{theta_ec}
	\theta(t)=\min\{\theta_1(t),\theta_2(t)\}.
\end{equation}
It follows that $\theta(t)=\theta_1(t),\forall t\in[0,1]\text{ and }\theta(t)=\theta_2(t),\forall t\in[1,+\infty)$.
For the convenience of subsequent analysis, we employ the expression (\ref{theta_ec}) for function $\theta$.

Next, we present a verifiable assumption to establish the equivalences on global minimizers and other optimality conditions between problems (\ref{s_l}) and (\ref{s_l_r}).
\begin{assumption}\label{assumpars}
	\item[(i)] For any $k\in[d]$ and $(i,j)\in[m]\times[n]$, $f(XY^{\mathbb{T}},S)$ is Lipschitz continuous on $\mathcal{B}$ with respect to $X_k$ and $Y_k$, with positive Lipschitz constants $L_X$ and $L_Y$, respectively, if $\lambda>0$, and with respect to $S_{ij}$ with positive Lipschitz constant $L_S$ if $\beta>0$.
	\item[(ii)] $\theta_1$ is a locally Lipschitz continuous function on $\mathbb{R}_+$ satisfying that
	\begin{equation}\label{eta_def}
		\eta:=\min\{\eta_1\in \partial^c\theta_{1}(t):t\in(0,1)\}>0,
	\end{equation}
	and positive parameters $r$ and $s$ in problem (\ref{s_l_r}) are chosen by
	\begin{equation}\label{subd_relas}
		r<\min\big\{\tau,\lambda\eta/\max\{L_X,L_Y\}\big\}\,\,\text{if}\,\,\lambda>0,\,\,\text{and}\,\,\,s<\beta\eta/L_S\,\,\text{if}\,\,\beta>0.
	\end{equation}
\end{assumption}

According to the inclusion relation between limiting subdifferential and Clarke subdifferential in \cite[Proposition 4.3.2]{cui2021modern}, we have $\min\{\eta_1\in\partial\theta_{1}(t):t\in(0,1)\}\geq\eta$.

Note that if $\mathcal{B}$ is bounded, then the moduli $L_X$, $L_Y$ and $L_S$ in Assumption \ref{assumpars} (i) exist. Function $\theta_1$ satisfying Assumption \ref{assumpars} (ii) can be taken as common convex or nonconvex relaxation functions of $\vartheta$, such as $\ell_p$-norm $(0<p\leq 1)$, SCAD and MCP.

\begin{theorem}\label{assum-rela}
	Suppose that Assumption \ref{assumpars} holds. Then, Assumption \ref{glob_para} holds for problem (\ref{s_l_r}), and hence $\mathcal{G}_{(\ref{s_l})}=\mathcal{G}_{(\ref{s_l_r})}$ and $V_{(\ref{s_l})}=V_{(\ref{s_l_r})}$.
	Moreover, any global minimizer $(X,Y,S)$ of problems (\ref{s_l}) and (\ref{s_l_r}) owns the properties (\ref{I-eq}) and (\ref{glb_prp}).
\end{theorem}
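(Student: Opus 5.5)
The plan is to verify (\ref{glb_prp}) for every global minimizer of (\ref{s_l_r}) by contradiction, using a first-order perturbation of one offending column or entry. Proposition \ref{glob_eqv} then gives $\mathcal{G}_{(\ref{s_l})}=\mathcal{G}_{(\ref{s_l_r})}$ and equal optimal values. Once the sets coincide, Proposition \ref{s_l_props}(i) gives (\ref{I-eq}) for every global minimizer, and Assumption \ref{glob_para} gives (\ref{glb_prp}).

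For the column part, assume $\lambda>0$ and let $(X,Y,S)\in\mathcal{G}_{(\ref{s_l_r})}$. Suppose some column satisfies $\|X_k\|_2\in(0,r)$; the case of $Y_k$ is symmetric. Shrink it along the ray $X_k(t):=(1-t)X_k$ for $t\in(0,1]$. Since $\|X_k(t)\|_2\le\|X_k\|_2<r<\tau$, the point stays in $\mathcal{B}^X$, and $\mathcal{B}^X$ is defined columnwise. The loss increases by at most $L_X t\|X_k\|_2$ by Assumption \ref{assumpars}(i).

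The regularizer decreases by $\lambda[\theta(\|X_k\|_2/r)-\theta((1-t)\|X_k\|_2/r)]$. Both arguments lie in $[0,1)$, so $\theta=\theta_1$ there. By Lebourg's mean value theorem for the locally Lipschitz $\theta_1$, this decrease equals $\lambda\,\eta_1\, t\|X_k\|_2/r$ for some $\eta_1\in\partial^c\theta_1(\xi)$ with $\xi\in((1-t)\|X_k\|_2/r,\|X_k\|_2/r)\subset(0,1)$. Hence the decrease is at least $\lambda\eta t\|X_k\|_2/r$ by (\ref{eta_def}), and this bound holds for any $t\in(0,1]$.

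The net change is therefore at most $t\|X_k\|_2(L_X-\lambda\eta/r)$, which is negative because $r<\lambda\eta/L_X$ by (\ref{subd_relas}). This contradicts global minimality. The simplest choice is $t=1$, i.e.\ zeroing the column: since $\theta(0)=0$, the regularizer drops by $\lambda\theta_1(\|X_k\|_2/r)$, which by the mean value argument is at least $\lambda\eta\|X_k\|_2/r$.

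For the entry part, assume $\beta>0$ and suppose $|S_{ij}|\in(0,s)$. Set $S_{ij}$ to $0$. This is feasible because $\bm{\kappa}^1_{ij}\le0\le\bm{\kappa}^2_{ij}$ makes $0$ admissible for the box. The same estimate gives a net change of at most $|S_{ij}|(L_S-\beta\eta/s)<0$, again a contradiction.

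The only delicate step is the lower bound $\theta_1(a)\ge\eta a$ for $a\in(0,1)$. I would justify it via Lebourg's theorem on $[0,a]$ together with $\theta_1(0)=0$. At the endpoint, the Clarke subgradient lies at some interior point $\xi\in(0,a)$, so the bound $\eta_1\ge\eta$ from (\ref{eta_def}) applies and no subgradient information at $t=0$ is needed. I also need the Lipschitz estimate of Assumption \ref{assumpars}(i) to apply along the segment joining the two points. This holds because both endpoints and the segment lie in $\mathcal{B}$, which is convex.
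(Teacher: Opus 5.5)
Your proposal is correct and follows the paper's proof. Like the paper, you zero the offending column (or entry), bound the loss increase by $L_X\|X_k\|_2$ (resp.\ $L_S|S_{ij}|$) via Assumption~\ref{assumpars}(i), and bound the regularizer decrease from below by $\lambda\eta\|X_k\|_2/r$ (resp.\ $\beta\eta|S_{ij}|/s$) via the Lebourg mean value theorem \cite[Theorem 2.3.7]{Clarke1983}, before concluding with Proposition~\ref{glob_eqv} and Proposition~\ref{s_l_props}(i); your general shrinking parameter $t$ is unnecessary, and $t=1$ is exactly the paper's choice.
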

\begin{proof}
	First, we prove that (\ref{glb_prp}) holds for any $(X,Y,S)\in\mathcal{G}^{\theta:r,s}_{(\ref{s_l_r})}$. Suppose that it does not hold. Then there exists a $(X,Y,S)\in\mathcal{G}^{\theta:r,s}_{(\ref{s_l_r})}$ that does not satisfy (\ref{glb_prp}). So, there exists $i_X\in[d]$, $i_Y\in[d]$ or $(i_S,j_S)\in[m,n]$ such that $\|X_{i_X}\|_2\in(0,r)$, $\|Y_{i_Y}\|_2\in(0,r)$ or $|S_{i_Sj_S}|\in(0,s)$.
	
	Consider the first case $\|X_{i_X}\|_2\in(0,r)$ when $\lambda>0$. Let $(\tilde{X},Y,S)\in\mathcal{B}$ with $\tilde{X}_i=X_i,\forall i\neq i_X$ and $\tilde{X}_{i_X}=\bm{0}$. By Assumption \ref{assumpars} and \cite[Theorem 2.3.7]{Clarke1983}, we get 
	\begin{equation*}
		\begin{split}
			&f(\tilde{X}Y^{\mathbb{T}},S)-f(XY^{\mathbb{T}},S)
			\leq\,L_X\|X-\tilde{X}\|_F<{\lambda\eta}\|X-\tilde{X}\|_F/{r}\\
			\leq\,&\,\lambda\big(\Theta^{r}(X)+\Theta^{r}(Y)\big)+\beta\tilde\Theta^{s}(S)-\lambda\big(\Theta^{r}(\tilde{X})+\Theta^{r}(Y)\big)-\beta\tilde\Theta^{s}(S).
		\end{split}
	\end{equation*}
	This means that $F_{\theta}^{r,s}(\tilde{X},Y,S)<F_{\theta}^{r,s}(X,Y,S)$, which contradicts $(X,Y,S)\in\mathcal{G}^{\theta:r,s}_{(\ref{s_l_r})}$. Thus, $\|X_i\|_2\notin(0,r),\forall i\in[d]$ for any $(X,Y,S)\in\mathcal{G}^{\theta:r,s}_{(\ref{s_l_r})}$.
	
	Similarly, we deduce that $\|Y_i\|_2\notin(0,r),\forall i\in[d]$ when $\lambda>0$ and $|S_{ij}|\notin(0,s),\forall(i,j)\in[m,n]$ when $\beta>0$ for any $(X,Y,S)\in\mathcal{G}^{\theta:r,s}_{(\ref{s_l_r})}$. As a result, (\ref{glb_prp}) holds for any $(X,Y,S)\in\mathcal{G}^{\theta:r,s}_{(\ref{s_l_r})}$. It follows from Proposition \ref{glob_eqv} that $\mathcal{G}_{(\ref{s_l})}=\mathcal{G}_{(\ref{s_l_r})}\text{ and }V_{(\ref{s_l})}=V_{(\ref{s_l_r})}$. Further, by Proposition \ref{s_l_props} (i), one has that global minimizers $(X,Y,S)$ of problems (\ref{s_l}) and (\ref{s_l_r}) own the property (\ref{I-eq}).
\end{proof}

In view of Theorem \ref{assum-rela}, the following conclusion naturally holds.
\begin{corollary}
	For problem (\ref{s_l}), if function $f$ satisfies Assumption \ref{assumpars} (i), then its all global minimizers own the properties (\ref{I-eq}) and (\ref{glb_prp}).
\end{corollary}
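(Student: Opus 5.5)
The plan is to get the corollary from Theorem \ref{assum-rela} by choosing the free relaxation data appropriately. The point is that the conclusion concerns only problem (\ref{s_l}), and so does not depend on $\theta$, $r$, $s$. Only Assumption \ref{assumpars} (i) is given; part (ii) involves objects we may pick ourselves. Concretely, we take $\theta_1(t)=t$ on $\mathbb{R}_+$, so $\theta(t)=\min\{t,1\}$ is the capped-$\ell_1$ function. It is Lipschitz, satisfies (\ref{theta1_def}) and (\ref{theta_cond}), and has $\partial^c\theta_1(t)=\{1\}$ on $(0,1)$, giving $\eta=1>0$ in (\ref{eta_def}).

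Next we need $r$ and $s$ obeying (\ref{subd_relas}). If $\beta>0$, take any $s\in(0,\beta/L_S)$. If $\lambda>0$ and $\tau>0$, take any $r\in(0,\min\{\tau,\lambda/\max\{L_X,L_Y\}\})$. Theorem \ref{assum-rela} then gives $\mathcal{G}_{(\ref{s_l})}=\mathcal{G}_{(\ref{s_l_r})}$, and every such minimizer satisfies (\ref{I-eq}) and (\ref{glb_prp}) for these particular $r,s$.

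The conclusion (\ref{glb_prp}) must be read with whatever $r,s$ the corollary intends. I read it as: there exist admissible $r,s$, or equivalently (\ref{glb_prp}) holds for every $r,s$ satisfying (\ref{subd_relas}) with $\eta=1$. Since the argument above works for every such choice, both readings are covered. Property (\ref{I-eq}) comes directly from Proposition \ref{s_l_props} (i) and needs no relaxation.

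The main obstacle is the degenerate case $\tau=0$ with $\lambda>0$, where no admissible $r<\tau$ exists. There $\mathcal{B}^X$ and $\mathcal{B}^Y$ force $X=\bm 0$ and $Y=\bm 0$, so the $X,Y$ part of (\ref{glb_prp}) holds trivially for any $r$. The $S$ part can then be proved directly by the column-zeroing argument of Theorem \ref{assum-rela}, applied to $F$ instead of $F_\theta^{r,s}$. Suppose a global minimizer of (\ref{s_l}) has $0<|S_{ij}|<s$. Replace $S_{ij}$ by $0$; this point remains feasible because $\mathcal{B}^S$ is a box containing $0$. The loss increases by at most $L_S|S_{ij}|<\beta$, while the penalty $\beta\|S\|_0$ drops by exactly $\beta$, so $F$ strictly decreases, a contradiction.

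The same direct argument also handles the $X,Y$ columns in general. If $0<\|X_k\|_2<r$, zeroing $X_k$ keeps the point in $\mathcal{B}^X$, raises $f$ by less than $L_Xr\le\lambda$, and lowers the penalty by $\lambda$. I would present this direct version as a remark, since it avoids any subtlety in invoking Theorem \ref{assum-rela}.
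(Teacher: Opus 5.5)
Your proposal is correct. Its main line is the paper's own: the paper derives the corollary in one line from Theorem \ref{assum-rela}, and you make the implicit choice explicit ($\theta_1(t)=t$, hence $\eta=1$, with admissible $r,s$).

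The direct zeroing argument you put in a remark is a genuinely different and stronger proof. It works on a global minimizer of (\ref{s_l}) itself and gives (\ref{glb_prp}) for every $r\le\lambda/\max\{L_X,L_Y\}$ and $s\le\beta/L_S$. It needs no relaxation and no requirement $r<\tau$, so the case $\tau=0$ (or any $\tau<r$) needs no separate treatment. It also avoids passing through $\mathcal{G}_{(\ref{s_l})}=\mathcal{G}_{(\ref{s_l_r})}$, whose proof in Proposition \ref{glob_eqv} implicitly assumes that (\ref{s_l_r}) attains its minimum.

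Moreover, every admissible $\theta_1$ has $\eta\le 1$: apply Lebourg's mean value theorem on $[\epsilon,1]$ and let $\epsilon\downarrow 0$, using $\theta_1(0)=0$ and $\theta_1(1)=1$. So this range contains every pair $(r,s)$ allowed by (\ref{subd_relas}) for any choice of $\theta_1$, and the direct argument settles every reading of the statement at once. Together with Proposition \ref{s_l_props} (i) for (\ref{I-eq}), it could serve as the whole proof.

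The paper's route buys only brevity, plus the observation that the isolation thresholds are exactly those that make (\ref{s_l_r}) an exact relaxation.
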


For any $I\in[2]^d$, $J\in[2]^{m\times n}$ and $\rho>0$, define $\Theta^{\rho}_I:\mathbb{R}^{r\times d}\rightarrow\mathbb{R}$ and $\tilde\Theta^{\rho}_J:\mathbb{R}^{m\times n}\rightarrow\mathbb{R}$ such that for any $V\in\mathbb{R}^{r\times d}$ and $S\in\mathbb{R}^{m\times n}$,
\begin{align}
	\Theta^{\rho}_{I}(V):=\sum_{i=1}^d\theta_{I_i}(\|V_i\|_2/\rho)\mbox{ and }\tilde\Theta^{\rho}_{J}(S):=\sum_{i=1}^m\sum_{j=1}^n\theta_{J_{ij}}(|S_{ij}|/\rho).\label{Theta-rho}
\end{align}
For any $V\in\mathbb{R}^{r\times d}$, $S\in\mathbb{R}^{m\times n}$ and $\rho>0$, define $I^{\rho,V}\in[2]^d$ and $J^{\rho,S}\in[2]^{m\times n}$ as follows. For any $i\in[d]$,
\begin{equation}\label{I_id}
	I^{\rho,V}_i:=\max\{j\in[2]:\theta_{j}(\|V_i\|_2/\rho)=\theta(\|V_i\|_2/\rho)\}\text{ if }\lambda>0,
\end{equation}
and $I^{\rho,V}_i:=2$ otherwise. For any $(i,j)\in[m,n]$,
\begin{equation}\label{J_id}
	J^{\rho,S}_{ij}:=\max\{l\in[2]:\theta_{l}(|S_{ij}|/\rho)=\theta(|S_{ij}|/\rho)\}\text{ if }\beta>0,
\end{equation}
and $J^{\rho,S}_{ij}:=2$ otherwise.

In what follows, we show that the first-order optimality condition of problem (\ref{s_l_r}) can induce property (\ref{glb_prp}) under Assumption \ref{assumpars}.
\begin{proposition}\label{s_l_r_props}
	Under Assumption \ref{assumpars}, the following statements hold for problem (\ref{s_l_r}).
	\begin{itemize}
		\item[(i)] If $(\bar{X},\bar{Y},\bar{S})\in\mathcal{L}_{(\ref{s_l_r})}$, then 
		\begin{equation}\label{locr-incl}
			\begin{split}
				\exists~~\,&(H_Z,H_S)\in\partial_{(Z,S)}f(\bar{X}\bar{Y}^{\mathbb{T}},\bar{S}),\\
				\emph{s.t.}\,\,&\bm{0}\in H_Z\bar{Y}+\lambda\partial_{X}\Theta^{r}_{I^{r,\bar{X}}}(\bar{X})+N_{\mathcal{B}^X}(\bar{X}),\\
				&\bm{0}\in H_Z^{\mathbb{T}}\bar{X}+\lambda\partial_{Y}\Theta^{r}_{I^{r,\bar{Y}}}(\bar{Y})+N_{\mathcal{B}^Y}(\bar{Y}),\\
				&\bm{0}\in H_S+\beta\partial_{S}\tilde\Theta^{s}_{J^{s,\bar{S}}}(\bar{S})+N_{\mathcal{B}^S}(\bar{S}),
			\end{split}
		\end{equation}
		where $I^{r,\bar{X}},I^{r,\bar{Y}}$ and $J^{s,\bar{S}}$ are fixed index vectors as defined in (\ref{I_id}) and (\ref{J_id}).
		\item[(ii)] Any point $(X,Y,S)\in\mathcal{B}$ satisfying (\ref{locr-incl}) owns the property (\ref{glb_prp}).
	\end{itemize}
\end{proposition}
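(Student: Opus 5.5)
For (i), I would freeze the selection indices and compare with a smooth-in-structure surrogate. Fix $I:=I^{r,\bar X}$, $I':=I^{r,\bar Y}$, $J:=J^{s,\bar S}$ as in (\ref{I_id})--(\ref{J_id}), and set
\[
\hat F(X,Y,S):=f(XY^{\mathbb{T}},S)+\lambda\big(\Theta^r_{I}(X)+\Theta^r_{I'}(Y)\big)+\beta\tilde\Theta^s_{J}(S).
\]
By (\ref{theta_ec}) we have $\theta\le\theta_j$ for $j\in[2]$, so $\hat F\ge F^{r,s}_\theta$ everywhere on $\mathcal{B}$. By the choice of the indices, $\hat F(\bar X,\bar Y,\bar S)=F^{r,s}_\theta(\bar X,\bar Y,\bar S)$. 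Hence a local minimizer of (\ref{s_l_r}) is also a local minimizer of $\hat F$ on $\mathcal{B}$.

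The function $\hat F$ is locally Lipschitz, since $\theta_1$ is locally Lipschitz, $\theta_2$ is constant, and $\|\cdot\|_2$ is Lipschitz. Fermat's rule \cite[Theorem 10.1]{Rockafellar1998} together with the sum rule \cite[Exercise 10.10]{Rockafellar1998} then gives
\[
\bm{0}\in\partial\hat F+N_{\mathcal{B}}.
\]
Two further facts split this inclusion. First, $\mathcal{B}$ is a product of closed convex sets, so its normal cone is the product of the normal cones $N_{\mathcal{B}^X}\times N_{\mathcal{B}^Y}\times N_{\mathcal{B}^S}$. Second, the regularizer is separable across $(X,Y,S)$ (\cite[Proposition 10.5]{Rockafellar1998}). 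For the composite term $f(XY^{\mathbb{T}},S)$, the chain rule \cite[Theorem 6.23]{Penot2013}, exactly as in Proposition \ref{s_l_props}(iii), yields a pair $(H_Z,H_S)$ with $X$-component $H_Z\bar Y$ and $Y$-component $H_Z^{\mathbb{T}}\bar X$. Together these give (\ref{locr-incl}).

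For (ii), I argue by contradiction, treating the $X$-block; $Y$ and $S$ are identical, using $L_Y$ and $L_S$. Suppose $\lambda>0$ and $\|\bar X_k\|_2\in(0,r)$. Then $\theta(\|\bar X_k\|/r)=\theta_1(\cdot)<1$, so $I_k=1$. Since $\|\bar X_k\|<r<\tau$, the column $\bar X_k$ lies in the interior of the $k$-th column constraint, and the normal cone component vanishes there. The $k$-th column of the first inclusion therefore reads
\[
\bm{0}=(H_Z\bar Y)_k+\lambda\,\xi\,\frac{\bar X_k}{r\|\bar X_k\|_2},\qquad \xi\in\partial\theta_1(\|\bar X_k\|/r),
\]
using the chain rule for the norm away from the origin. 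The remark after Assumption \ref{assumpars} gives $\xi\ge\eta>0$. Taking norms, $\|(H_Z\bar Y)_k\|_2=\lambda\xi/r\ge\lambda\eta/r>\max\{L_X,L_Y\}\ge L_X$, by (\ref{subd_relas}).

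The main obstacle is to show the opposite bound $\|(H_Z\bar Y)_k\|_2\le L_X$. Assumption \ref{assumpars}(i) gives Lipschitz continuity of $f(XY^{\mathbb{T}},S)$ in $X_k$ on $\mathcal{B}$. Because $\bar X_k$ is interior in the $X_k$-direction, any limiting subgradient of this partial function at the point is bounded in norm by $L_X$. It remains to check that $(H_Z\bar Y)_k$ is such a partial subgradient. If $f$ is strictly differentiable this is immediate. In general I would note that $\partial_{(Z,S)}f$ is contained in the Clarke subdifferential, and argue through the Clarke partial estimate \cite[Theorem 2.3.7]{Clarke1983}: the directional derivative of $f$ in direction $(e\,\bar Y_k^{\mathbb{T}},0)$ is at most $L_X\|e\|$, and this bounds $\langle H_Z\bar Y_k,e\rangle$.

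With both bounds in hand we obtain $L_X\ge\|(H_Z\bar Y)_k\|_2>L_X$, a contradiction. Hence $\|\bar X_k\|_2\notin(0,r)$, and (\ref{glb_prp}) follows.
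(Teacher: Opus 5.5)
Your argument follows the paper's proof step for step. In (i) you use the frozen-index majorant $\hat F\geq F^{r,s}_\theta$ that touches $F^{r,s}_\theta$ at $(\bar X,\bar Y,\bar S)$, then Fermat's rule with the sum, separability and chain rules. In (ii) you use the column-wise contradiction $\lambda\eta/r>L_X\geq\|H_Z\bar Y_k\|_2$. Part (i) is fine. The gap is the step you flag yourself: the bound $\|H_Z\bar Y_k\|_2\leq L_X$ for an \emph{arbitrary} $(H_Z,H_S)\in\partial_{(Z,S)}f(\bar X\bar Y^{\mathbb{T}},\bar S)$. Neither of your justifications works when $f$ is only locally Lipschitz.

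The first fails because the chain rule only says that partial subgradients of $X_k\mapsto f(XY^{\mathbb{T}},S)$ have the form $H_Z\bar Y_k$. It does not say that every such $H_Z\bar Y_k$ is a partial subgradient. The Clarke route fails because $f^\circ\big((\bar Z,\bar S);(e\bar Y_k^{\mathbb{T}},0)\big)$ is a $\limsup$ over base points $(Z',S')$ in a full neighbourhood of $(\bar Z,\bar S)$. Assumption \ref{assumpars}(i) only controls $f$ on $\{(XY^{\mathbb{T}},S):(X,Y,S)\in\mathcal{B}\}$, whose $Z$-part consists of matrices of rank at most $d$ and has empty interior when $d<\min\{m,n\}$. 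Off that set, $f$ may change arbitrarily fast in the direction $e\bar Y_k^{\mathbb{T}}$.

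In fact the bound, and statement (ii) itself, fail in this generality. Take the following data:
\begin{itemize}
\item $m=n=2$, $d=1$, $\lambda>0$, $\beta=0$, $\tau=1$, and $\theta_1(t)=t$, so $\eta=1$;
\item $\bar X=\bar Y=(\tfrac14,0)^{\mathbb{T}}$, $\bar S=\bm{0}$, and $\bar Z=\bar X\bar Y^{\mathbb{T}}=\tfrac1{16}E_{11}$, with $E_{ij}$ the matrix units;
\item $f(Z,S)=\min\{8\lambda|z_{11}-\tfrac1{16}|,|\det Z|\}$.
\end{itemize}
Since $\det(XY^{\mathbb{T}})=0$ for all $X,Y\in\mathbb{R}^{2\times 1}$, we have $f(XY^{\mathbb{T}},S)\equiv 0$. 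Hence Assumption \ref{assumpars} holds with $L_X=L_Y=\lambda$ and $r=\tfrac12$.

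For small $\epsilon>0$, $f=\pm 8\lambda(z_{11}-\tfrac1{16})$ on a neighbourhood of $\bar Z+\epsilon E_{22}\pm\epsilon^2E_{11}$. Letting $\epsilon\downarrow 0$ gives $(-8\lambda E_{11},\bm{0})\in\partial_{(Z,S)}f(\bar Z,\bar S)$. For this $H_Z$ we get $H_Z\bar Y=H_Z^{\mathbb{T}}\bar X=-2\lambda e_1$, which cancels $\lambda\nabla(\|\cdot\|_2/r)(\bar X)=2\lambda e_1$ (and likewise at $\bar Y$). The normal cones at $\bar X,\bar Y$ are $\{\bm{0}\}$, and $H_S=\bm{0}$ makes the $S$-inclusion trivial. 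So (\ref{locr-incl}) holds although $\|\bar X\|_2=\tfrac14\in(0,r)$, which violates (\ref{glb_prp}).

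The step can therefore only be closed under an extra hypothesis. One option is strict differentiability of $f$ at $(\bar X\bar Y^{\mathbb{T}},\bar S)$; this is your ``immediate'' case, and it covers the differentiable $f$ of the algorithmic section. Another is to replace $L_X,L_Y$ by $\tau$ times a Lipschitz modulus of $f$ in $Z$ on a full neighbourhood of $(\bar Z,\bar S)$, which bounds $\|H_Z\|_F$ directly. The paper's own proof asserts the same bound from Assumption \ref{assumpars}(i) without argument, so you inherited this gap rather than introduced it. Your Clarke argument still does not close it.
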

\begin{proof}
	(i) Let $(\bar{X},\bar{Y},\bar{S})\in\mathcal{L}_{(\ref{s_l_r})}$. Then there exists a $\delta>0$ such that for any $(X,Y,S)\in B_\delta(\bar{X},\bar{Y},\bar{S})\cap\mathcal{B}$, $F_{\theta}^{r,s}(\bar{X},\bar{Y},\bar{S})\leq F_{\theta}^{r,s}(X,Y,S)$.
	Define $\Theta(X,Y,S):=\lambda\big(\Theta^{r}_{I^{r,\bar{X}}}(X)+\Theta^{r}_{I^{r,\bar{Y}}}(Y)\big)+\beta\tilde\Theta^{s}_{I^{s,\bar{S}}}(S),\forall (X,Y,S)\in\mathcal{B}$. Combining with the definitions of $\theta$, $I^{\rho,V}$ and $J^{\rho,S}$ in (\ref{theta_ec}), (\ref{I_id}) and (\ref{J_id}), we have $f(\bar{X}\bar{Y}^{\mathbb{T}},\bar{S})+ \Theta(\bar{X},\bar{Y},\bar{S})=F_{\theta}^{r,s}(\bar{X},\bar{Y},\bar{S})\leq F_{\theta}^{r,s}(X,Y,S)\leq f(XY^{\mathbb{T}},S)+\Theta(X,Y,S),\forall (X,Y,S)\in B_\delta(\bar{X},\bar{Y},\bar{S})\cap\mathcal{B}$. Then, $(\bar{X},\bar{Y},\bar{S})$ is a local minimizer of $f(XY^{\mathbb{T}},S)+\Theta(X,Y,S)$ on $\mathcal{B}$. By \cite[Theorem 6.23]{Penot2013} and \cite[Proposition 10.5, Exercise 10.10]{Rockafellar1998}, it follows from the first-order optimality condition that (\ref{locr-incl}) holds.
	
	(ii) Let $(\bar{X},\bar{Y},\bar{S})$ satisfy (\ref{locr-incl}). Consider that $\lambda>0$ and there exists an $i_0\in[d]$ such that $\|\bar{X}_{i_0}\|_2\in(0,r)$. Then, $I^{r,\bar{X}}_{i_0}=1$. Due to $r<\tau$ in Assumption \ref{assumpars} (ii), we obtain $N_{\mathcal{B}^{X}_{i_0}}(\bar{X}_{i_0})=\{\bm{0}\}$. Combining with Assumption \ref{assumpars} (i), we get that $|\gamma|>L_X\geq\|H_Z\bar{Y}_{i_0}\|_2,\forall (H_Z,H_S)\in\partial_{(Z,S)}f(\bar{X}\bar{Y}^{\mathbb{T}},\bar{S})$ holds  for any $\gamma\in \lambda\partial_{X_{i_0}}\Theta^{r}_{I^{r,\bar{X}}}(\bar{X})+N_{\mathcal{B}^{X}_{i_0}}(\bar{X}_{i_0})$. This yields that (\ref{locr-incl}) does not hold and hence $\|\bar{X}_i\|_2\notin(0,r),\forall i\in[d]$. Similarly, it holds that $\|\bar{Y}_i\|_2\notin(0,r),\forall i\in[d]$ when $\lambda>0$ and $|\bar{S}_{ij}|\notin(0,s),\forall (i,j)\in[m,n]$ when $\beta>0$. Thus, we get (\ref{glb_prp}) with $(X,Y,S)$ replaced by $(\bar{X},\bar{Y},\bar{S})$.
\end{proof}

In view of Proposition \ref{s_l_props} and Proposition \ref{s_l_r_props}, we define two classes of stationary points for problems (\ref{s_l}) and (\ref{s_l_r}). They embed the properties of global minimizers into the first-order optimality conditions of (\ref{s_l}) and (\ref{s_l_r}).
\begin{definition}\label{ssp_defs}
	\item[(i)] For any triple $(X,Y,S)\in\mathbb{R}^{m\times d}\times\mathbb{R}^{n\times d}\times\mathbb{R}^{m\times n}$, the properties (\ref{I-eq}) and (\ref{glb_prp}) are called consistency and isolation, respectively.
	\item[(ii)] $(\bar{X},\bar{Y},\bar{S})$ is called a stationary point of (\ref{s_l_r}), if it satisfies (\ref{locr-incl}).
	\item[(iii)] $(\bar{X},\bar{Y},\bar{S})$ is called a strong stationary point of (\ref{s_l}) or (\ref{s_l_r}), if it is a stationary point of (\ref{s_l}) or (\ref{s_l_r}) with the consistency and isolation properties.\\
	The sets of all strong stationary points of (\ref{s_l}) and (\ref{s_l_r}) are denoted as $\mathcal{S}^s_{(\ref{s_l})}$ and $\mathcal{S}^s_{(\ref{s_l_r})}$, respectively.
\end{definition}

Due to \cite[Proposition 1]{Li2024}, the defined stationary points corresponds to the commonly defined limiting-critical points for (\ref{s_l}) and (\ref{s_l_r}) with a smooth $f$, respectively. Hence, the strong stationary points have stronger optimality condition than the limiting-critical points for (\ref{s_l}) and (\ref{s_l_r}) when $f$ is smooth. Further, we show the relations of strong stationary points and global minimizers between (\ref{s_l}) and (\ref{s_l_r}).
\begin{theorem}\label{glob-stat-eqvs}
	Under Assumption \ref{assumpars}, it holds that
	\begin{equation*}
		\mathcal{G}_{(\ref{s_l_r})}=\mathcal{G}_{(\ref{s_l})}\subseteq\mathcal{S}^s_{(\ref{s_l})}=\mathcal{S}^s_{(\ref{s_l_r})}.
	\end{equation*}
\end{theorem}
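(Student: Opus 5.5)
The chain splits into three claims: the equality $\mathcal{G}_{(\ref{s_l_r})}=\mathcal{G}_{(\ref{s_l})}$, the inclusion $\mathcal{G}_{(\ref{s_l})}\subseteq\mathcal{S}^s_{(\ref{s_l})}$, and the equality $\mathcal{S}^s_{(\ref{s_l})}=\mathcal{S}^s_{(\ref{s_l_r})}$.

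\textbf{Global minimizers.} The first equality is exactly Theorem \ref{assum-rela}. That theorem also gives consistency (\ref{I-eq}) and isolation (\ref{glb_prp}) for every $(X,Y,S)\in\mathcal{G}_{(\ref{s_l})}$. A global minimizer is a local minimizer, so Proposition \ref{s_l_props} (iii) gives (\ref{loc-incl}). Hence $\mathcal{G}_{(\ref{s_l})}\subseteq\mathcal{S}^s_{(\ref{s_l})}$.

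\textbf{Matching the two stationarity systems.} Fix $(\bar X,\bar Y,\bar S)\in\mathcal{B}$ with consistency and isolation. I will show that (\ref{loc-incl}) holds iff (\ref{locr-incl}) holds, with the same $(H_Z,H_S)$. The comparison is done column by column for $X,Y$ and entry by entry for $S$; take $\lambda,\beta>0$, the zero cases being trivial since then $\mathbb{I}=[d]$ or $\mathbb{J}=[m]\times[n]$ and the penalty terms vanish.

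Under isolation each column satisfies $\bar X_i=\bm 0$ or $\|\bar X_i\|_2\ge r$.
\begin{itemize}
\item If $\|\bar X_i\|_2\ge r$, then $I^{r,\bar X}_i=2$. Since $\theta_2\equiv1$ is constant, $\partial\theta_2=\{0\}$ and the penalty contributes nothing. Also $i\in\mathbb{I}_{\bar X}$, so the normal cone of $\underline{\mathcal{B}}^X_{\mathbb{I}_{\bar X}}$ in column $i$ equals $N_{\{\|\cdot\|_2\le\tau\}}(\bar X_i)$, matching $N_{\mathcal{B}^X}$. The two conditions on column $i$ are therefore identical.
\item If $\bar X_i=\bm 0$, then $i\notin\mathbb{I}_{\bar X}$. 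The subspace constraint in (\ref{s_l}) gives normal cone $\mathbb{R}^m$ in that column, so (\ref{loc-incl}) puts no condition there. In (\ref{s_l_r}), $I^{r,\bar X}_i=2$ because $\theta_1(0)=\theta_2(0)$ fails ($0\ne1$)… more precisely $\theta(0)=\theta_1(0)=0\neq1=\theta_2(0)$, so $I^{r,\bar X}_i=1$. The condition then reads $\bm 0\in H_Z\bar Y_i+\lambda\,\partial_{X_i}\theta_1(\|X_i\|_2/r)|_{X_i=\bm 0}$, since $N_{\mathcal{B}^X_i}(\bm 0)=\{\bm 0\}$ when $\tau>r>0$.
\end{itemize}

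\textbf{The main obstacle: zero columns.} To handle $\bar X_i=\bm 0$, I would show the subdifferential of $X_i\mapsto\theta_1(\|X_i\|_2/r)$ at $\bm 0$ contains the ball $B_{\eta/r}$. By (\ref{eta_def}), and since the limiting subdifferential at $\bm 0$ includes limits along rays $t u$ with $t\downarrow0$, we get $\partial\ni (\eta_1/r)u$ with $\eta_1\ge\eta$ for every unit $u$. The composite-norm chain rule then gives $\{g: \|g\|_2\le \liminf\partial\theta_1/r\}$; at minimum I need the full ball of radius $\eta/r$, and I would invoke convexity arguments or a radial-composition rule (e.g.\ \cite[Proposition 10.5]{Rockafellar1998}-type calculus) to get it. Then Assumption \ref{assumpars} (i) gives $\|H_Z\bar Y_i\|_2\le L_X<\lambda\eta/r$. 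So $-H_Z\bar Y_i\in\lambda B_{\eta/r}$, and the condition holds automatically.

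The same reasoning applies to $Y$ with $L_Y$ and to $S$ entrywise, using $|H_{S,ij}|\le L_S<\beta\eta/s$. For $\bar S_{ij}=0$ the restricted set in (\ref{s_l}) has normal cone $\mathbb{R}$ in that entry, while in (\ref{s_l_r}) the $\theta_1$ subdifferential covers $[-\beta\eta/s,\beta\eta/s]$.

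\textbf{Conclusion.} Both systems reduce to the same conditions on the active columns and entries, and the inactive ones are free in both. Combined with the shared consistency and isolation requirements, this gives $\mathcal{S}^s_{(\ref{s_l})}=\mathcal{S}^s_{(\ref{s_l_r})}$. The care needed is in the subdifferential at $\bm 0$ and in using the bound $\|H_Z\bar Y_i\|_2\le L_X$ for limiting subgradients, which comes from the Lipschitz moduli via \cite[Theorem 2.3.7]{Clarke1983}-type estimates.
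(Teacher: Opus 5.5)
Your decomposition is the paper's own. The global part comes from Theorem~\ref{assum-rela} and Proposition~\ref{s_l_props}(iii). The equality $\mathcal{S}^s_{(\ref{s_l})}=\mathcal{S}^s_{(\ref{s_l_r})}$ is proved column by column and entry by entry: isolation forces $I^{r,\bar X}_i=2$ on the supports, so the constant $\theta_2$ contributes nothing there, and Assumption~\ref{assumpars} absorbs the gradient terms on the zero columns and entries. The paper asserts that absorption without further argument. You at least identify what is needed, but your route to $B_{\eta/r}\subseteq\partial\big(\theta_1(\|\cdot\|_2/r)\big)(\bm 0)$ does not close, for four reasons:
\begin{itemize}
\item limits along rays only produce subgradients $(\eta_1/r)u$ of norm at least $\eta/r$, not the whole ball;
\item the limiting subdifferential is not convex;
\item $\theta_1$ need not be convex (SCAD, MCP), so convexity arguments are unavailable;
\item \cite[Proposition 10.5]{Rockafellar1998} concerns separable sums, not radial compositions.
\end{itemize}

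The formula you have in mind is correct for the regular subdifferential once ``$\liminf\partial\theta_1$'' is replaced by $\liminf_{t\downarrow0}\theta_1(t)/t$. The lower bound on that quantity is a mean value argument applied to $\theta_1$, not to $f$. For $0<\epsilon<t\le1$, Lebourg's theorem on $[\epsilon,t]$ gives $\theta_1(t)-\theta_1(\epsilon)\ge\eta(t-\epsilon)$, because the intermediate point lies in $(0,1)$. Letting $\epsilon\downarrow0$ yields $\theta_1(t)\ge\eta t$ on $[0,1]$. Consequently $\theta_1(\|x\|_2/r)\ge(\eta/r)\|x\|_2\ge\langle g,x\rangle$ whenever $\|x\|_2\le r$ and $\|g\|_2\le\eta/r$. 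So the whole ball lies in the regular, hence limiting, subdifferential at $\bm 0$; the scalar case for $S_{ij}$ is identical.

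Two side remarks. First, for the $X$ and $Y$ blocks you do not need this step at all. Consistency gives $\bar Y_i=\bm 0$ whenever $\bar X_i=\bm 0$, so $H_Z\bar Y_i=\bm 0$ and $H_Z^{\mathbb{T}}\bar X_i=\bm 0$ on the inactive columns. It then suffices that $\bm 0\in\partial\big(\theta_1(\|\cdot\|_2/r)\big)(\bm 0)$, which holds because $\bm 0$ minimizes that function. Only the entries of $S$ genuinely use $L_S<\beta\eta/s$ and the interval inclusion.

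Second, the bound $|(H_S)_{ij}|\le L_S$ for an arbitrary limiting subgradient is a step the paper also takes for granted. It is clear when $f$ is continuously differentiable, so that $H_S=\nabla_S f$. It does not follow from a mean value theorem such as \cite[Theorem 2.3.7]{Clarke1983}, which bounds increments by subgradients and not the other way round.
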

\begin{proof}
	Due to Theorem \ref{assum-rela}, one has that $\mathcal{G}_{(\ref{s_l_r})}=\mathcal{G}_{(\ref{s_l})}$. Moreover, by Proposition \ref{s_l_props}, Proposition \ref{s_l_r_props} and Theorem \ref{assum-rela}, we get $\mathcal{G}_{(\ref{s_l})}\subseteq\mathcal{S}^s_{(\ref{s_l})}$ and $\mathcal{G}_{(\ref{s_l_r})}\subseteq\mathcal{S}^s_{(\ref{s_l_r})}$.
	
	Next, it suffices to prove that $\mathcal{S}^s_{(\ref{s_l})}=\mathcal{S}^s_{(\ref{s_l_r})}$.
	
	Suppose $(X,Y,S)\in\mathcal{S}^s_{(\ref{s_l_r})}$.  By (\ref{glb_prp}), one has that $I^{r,X}_i=2,\forall i\in\mathbb{I}_{X}$, $I^{r,Y}_i=2,\forall i\in\mathbb{I}_{Y}$ and $J^{s,S}_{ij}=2,\forall (i,j)\in\mathbb{J}_{S}$, and hence
	\begin{equation}\label{p0-matr}
		[\partial_{X}\Theta^{r}_{I^{r,X}}(X)]_{\mathbb{I}_{X}}=\{\bm{0}\},\,[\partial_{Y}\Theta^{r}_{I^{r,Y}}(Y)]_{\mathbb{I}_{Y}}=\{\bm{0}\},\,[\partial_{S}\tilde\Theta^{s}_{J^{s,S}}(S)]_{\mathbb{J}_{S}}=\{\bm{0}\}.
	\end{equation}
	This together with (\ref{locr-incl}) yields that there exists an $(H_Z,H_S)\in\partial_{(Z,S)}f(XY^{\mathbb{T}},S)$ such that (\ref{loc-incl}) holds. Combining with (\ref{I-eq}) and (\ref{glb_prp}), we get $(X,Y,S)\in\mathcal{S}^s_{(\ref{s_l})}$.
	
	Conversely, let $(X,Y,S)\in\mathcal{S}^s_{(\ref{s_l})}$. Due to (\ref{glb_prp}), we deduce that (\ref{p0-matr}) holds. Further, by (\ref{loc-incl}), we infer that there exists $(H_Z,H_S)\in\partial_{(Z,S)}f(XY^{\mathbb{T}},S)$ such that
	$\bm{0}\in\big[H_ZY+\lambda\partial_{X}\Theta^{r}_{I^{r,X}}(X)+N_{\mathcal{B}^X}(X)\big]_{\mathbb{I}_{X}}$, $\bm{0}\in \big[H_Z^{\mathbb{T}}X+\lambda\partial_{Y}\Theta^{r}_{I^{r,Y}}(Y)+N_{\mathcal{B}^Y}(Y)\big]_{\mathbb{I}_{Y}}$ and $\bm{0}\in \big[H_S+\beta\partial_{S}\tilde\Theta^{s}_{J^{s,S}}(S)+N_{\mathcal{B}^S}(S)\big]_{\mathbb{J}_{S}}$. In view of Assumption \ref{assumpars} and $\bm{0}\in N_{\mathcal{B}^X_{\mathbb{I}^c_{X}}}(X_{\mathbb{I}^c_{X}})=[N_{\mathcal{B}^X}(X)]_{\mathbb{I}^c_{X}}$, we get $\bm{0}\in\big[H_ZY+\lambda\partial_{X}\Theta^{r}_{I^{r,X}}(X)+N_{\mathcal{B}^X}(X)\big]_{\mathbb{I}^c_{X}}$. Similarly, it holds that $\bm{0}\in \big[H_Z^{\mathbb{T}}X+\lambda\partial_{Y}\Theta^{r}_{I^{r,Y}}(Y)+N_{\mathcal{B}^Y}(Y)\big]_{\mathbb{I}^c_{Y}}$ and $\bm{0}\in \big[H_S+\beta\partial_{S}\tilde\Theta^{s}_{J^{s,S}}(S)+N_{\mathcal{B}^S}(S)\big]_{\mathbb{J}^c_{S}}$. Combining with the definitions in (\ref{Theta-rho}), column separability of $\mathcal{B}^X$ and $\mathcal{B}^Y$, and element separability of $\mathcal{B}^S$, we get (\ref{locr-incl}) and hence $(X,Y,S)\in\mathcal{S}^s_{(\ref{s_l_r})}$ by (\ref{I-eq}) and (\ref{glb_prp}).
\end{proof}

\begin{remark}
	Different from using the density function in \cite{bian2024nonsmooth}, mathematical program with equilibrium constraints in \cite{liu2018equivalent} and concave function in \cite{Pan2021} to construct relaxation problems for the $\ell_0$-norm related problems, we only use the lower bound of subdifferential information to characterize the relaxation function for establishing the equivalence between primal and relaxation problems in the sense of global minimizers and strong stationary points.
\end{remark}

In view of Proposition \ref{s_l_props}, Theorem \ref{assum-rela}, Proposition \ref{s_l_r_props} and Theorem \ref{glob-stat-eqvs}, some relations between problems (\ref{s_l}) and (\ref{s_l_r}) are shown as Figure\,\ref{opt-rela}.
\begin{figure}[!t]
	\centering
	\includegraphics[width=2in]{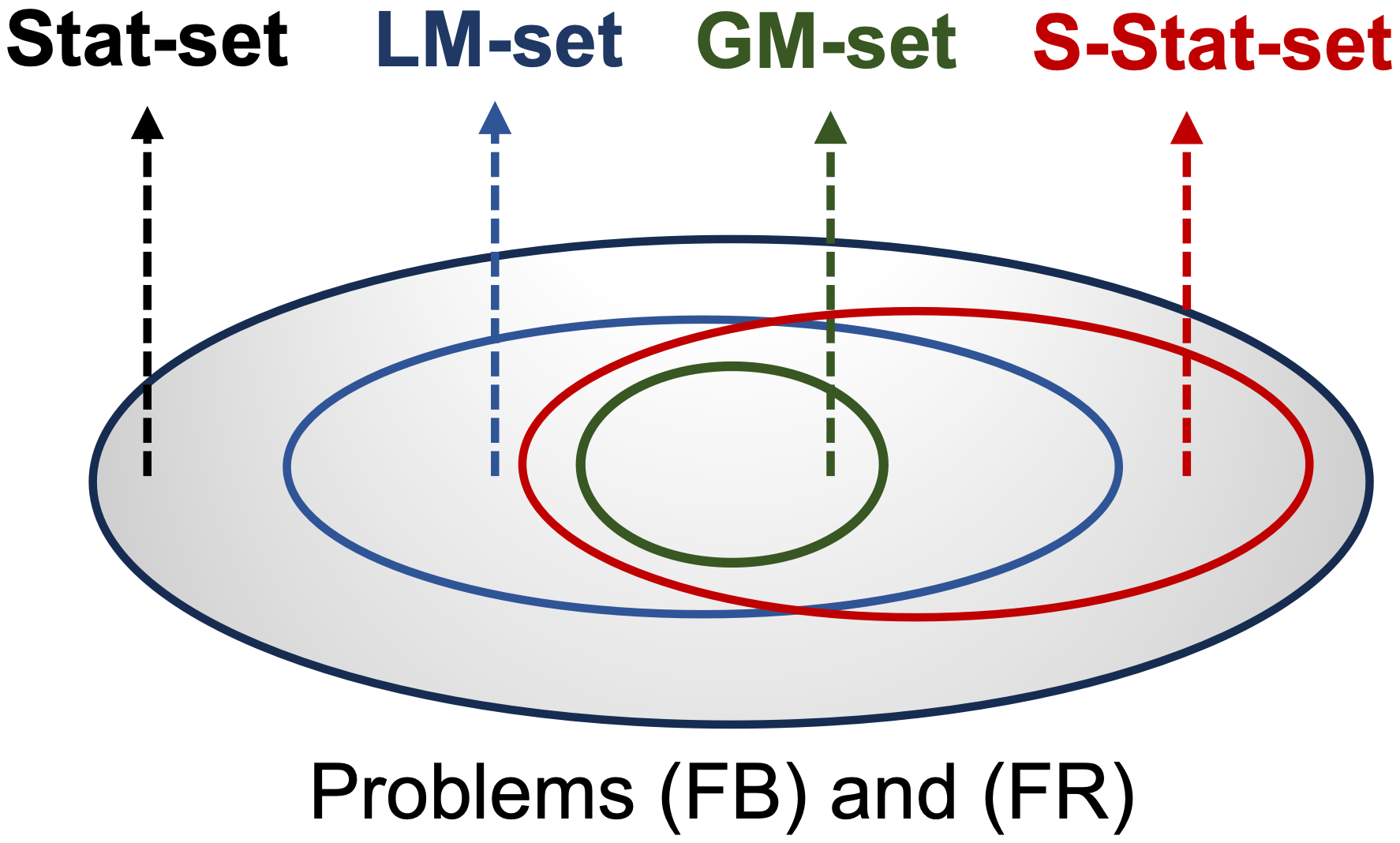}
	\hfil
	\includegraphics[width=3.4in]{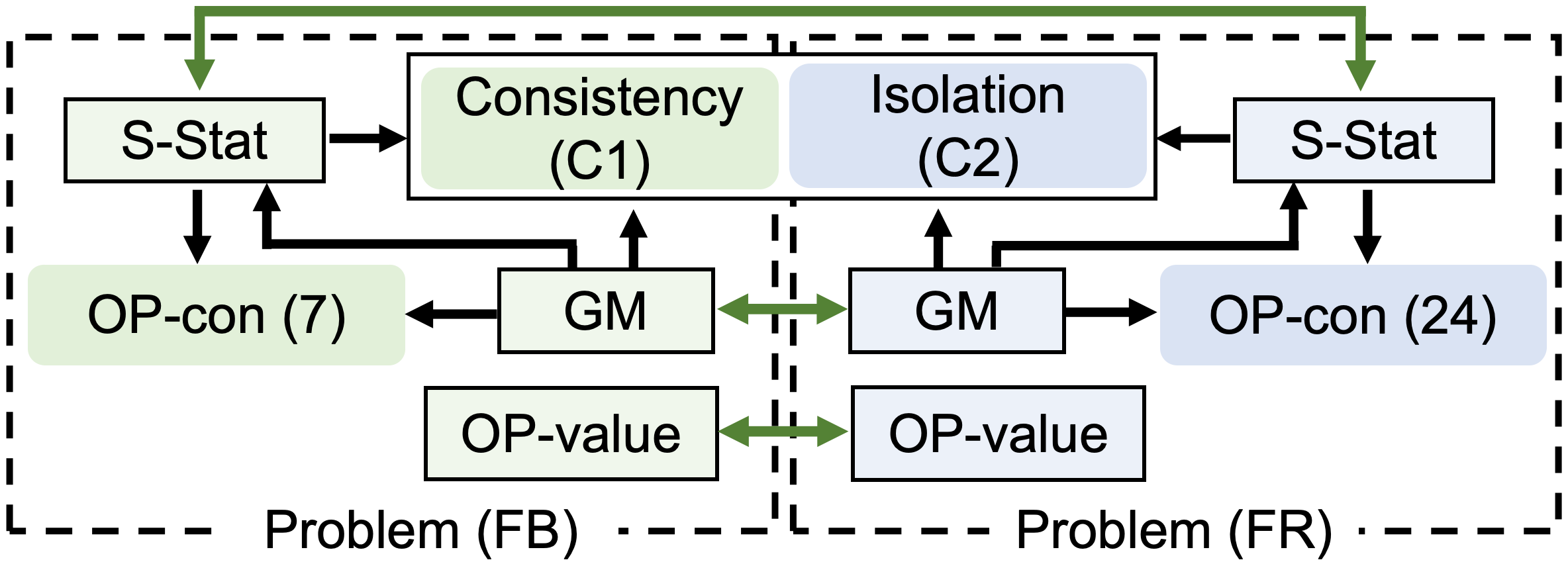}
	\caption{Relation diagram of the optimality conditions for problems (\ref{s_l}) and (\ref{s_l_r}), where `GM/LM-set' means `the set of global/local minimizers', `(S-)Stat-set' means `the set of (strong-)stationary points', and `GM', `OP-value', `OP-con' and `S-Stat' means `global minimizer', `optimal value', `optimal condition' and `strong stationary point', respectively.}
	\label{opt-rela}
\end{figure}

\section{Joint alternating proximal gradient algorithm}\label{section 4}
In this section, we propose a novel proximal gradient-based algorithm to solve the general factorization problem (\ref{GFP}). Throughout this section, we assume that $\inf_{X,Y,S}G(X,Y,S)>-\infty$. Problems (\ref{s_l}) and (\ref{s_l_r}) are special cases of problem (\ref{GFP}). We analyze the convergence of the designed algorithm to the limiting-critical points of problem (\ref{GFP}). Further, we demonstrate that the designed algorithm can find the stationary points of the rank and $\ell_0$ regularized problem (\ref{m_s_l}). Combining with the equivalent relaxation theory in Section \ref{section 3}, we improve this algorithm to show that it can also converge to the strong stationary points of problems (\ref{s_l}) and (\ref{s_l_r}).

\subsection{General factorization case}
In this part, we propose the joint alternating proximal gradient (JA-PG) algorithm for the general factorization problem (\ref{GFP}), analyze its convergence results in the general case, and specific cases on problems (\ref{m_s_l}), (\ref{s_l}) and (\ref{s_l_r}), respectively. Different from the existing popular algorithms that uniformly perform alternating proximal calculations for block-based structures, we treat the factorized matrix variables $X$ and $Y$ simultaneously, and perform alternating proximal calculations with the non-factorized variable $S$ in the designed algorithm. Moreover, compared to the existing alternating proximal gradient-based algorithms, the analysis in this part is conducted under weaker assumption on $f(XY^{\mathbb{T}},S)$, that is, $\nabla f(XY^{\mathbb{T}},S)$ is locally Lipschitz continuous with respect to $(X,Y)$ and $S$, respectively.

In the JA-PG algorithm, for the given $\bar{X}\in\mathbb{R}^{m\times d},\bar{Y}\in\mathbb{R}^{n\times d},\bar{S}\in\mathbb{R}^{m\times n},\iota,\iota_1,\iota_2>0$ and any $X\in\mathbb{R}^{m\times d}$, $Y\in\mathbb{R}^{n\times d}$, $S\in\mathbb{R}^{m\times n}$, define
\begin{align*}
	&f_J(X,Y;\bar{S}):=\,f(XY^{\mathbb{T}},\bar{S})+\lambda\big(u(X)+v(Y)\big),\\
	&f_A(S;\bar{X},\bar{Y}):=\,f(\bar{X}\bar{Y}^{\mathbb{T}},S)+\beta w(S),\\
	&\omega_A(S;\bar{X},\bar{Y},\bar{S},\iota):=\,\beta w(S)+\langle{\nabla}_Sf(\bar{X}{\bar{Y}}^{\mathbb{T}},\bar{S}),S-\bar{S}\rangle+\frac{\iota}{2}\|S-\bar{S}\|_F^2,\\
	&\omega_J(X,Y;\bar{X},\bar{Y},\bar{S},\iota_1,\iota_2):=\,\lambda\big(u(X)+v(Y)\big)+\langle{\nabla}_Zf(\bar{X}{\bar{Y}}^{\mathbb{T}},\bar{S}){\bar{Y}},X-\bar{X}\rangle\\
	&\qquad\qquad\quad+\langle{\nabla}_Zf(\bar{X}{\bar{Y}}^{\mathbb{T}},\bar{S})^{\mathbb{T}}\bar{X},Y-\bar{Y}\rangle+\frac{\iota_1}{2}\|X-\bar{X}\|_F^2+\frac{\iota_2}{2}\|Y-\bar{Y}\|_F^2.
\end{align*}
\begin{algorithm}
	\caption{(\textbf{JA-PG algorithm}): Joint alternating proximal gradient algorithm for problem (\ref{GFP})}
	\label{JA-PG}
	{\bf Initialization:} Choose $(X^0,Y^0,S^0)\in\text{dom}(G)$, $0<\underline{\iota}\leq\overline{\iota}$, $0<c_{\min}\leq c_{\max}$ and $\varrho>1$. 
	Let $\{\iota_{1,k}^B\},\{\iota_{2,k}^B\},\{\iota_{k}^B\}\subseteq[\underline{\iota},\overline{\iota}]$ and $\{c_{1,k}\},\{c_{2,k}\}\subseteq[c_{\min},c_{\max}]$. Set $k=0$.\\
	{\bf Step 1:} Calculate $(X^{k+1},Y^{k+1})$ by JA-J subroutine and go to Step 2.\\
	{\bf Step 2:} Calculate $S^{k+1}$ by JA-A subroutine and go to Step 3.\\
	{\bf Step 3:} Update $k\leftarrow k+1$ and return to Step 1.
\end{algorithm}
\begin{algorithm}[h!]
	\caption{(\textbf{JA-J subroutine}): Calculate $(X^{k+1},Y^{k+1})$ in Step 1 of the JA-PG algorithm}
	\label{JA-PG-step1}
	For $l=1,2,\ldots$, do\\
	(1a) let $\iota_{1,k}=\iota_{1,k}^B\varrho^{l-1}$ and $\iota_{2,k}=\iota_{2,k}^B\varrho^{l-1}$;\\
	(1b) compute $(X^{k+1}(\iota_{1,k}),Y^{k+1}(\iota_{2,k}))$ by
	\vspace{-2mm}\begin{equation}\label{step1-eq}
		(X^{k+1}(\iota_{1,k}),Y^{k+1}(\iota_{2,k}))\in\text{argmin}_{(X,Y)}\,\omega_J(X,Y;X^k,Y^k,S^k,\iota_{1,k},\iota_{2,k});
	\end{equation}
	(1c) if $(X^{k+1}(\iota_{1,k}),Y^{k+1}(\iota_{2,k}))$ satisfies 
	\vspace{-1mm}
	\begin{align*}
		f_J(X^{k+1}(\iota_{1,k}),Y^{k+1}(\iota_{2,k});S^k)
		\leq\,& f_J(X^k,Y^k;S^k)\\&-\frac{c_{1,k}}{2}\|(X^{k+1}(\iota_{1,k}),Y^{k+1}(\iota_{2,k}))-(X^k,Y^k)\|_F^2,
	\end{align*}
	output $(X^{k+1},Y^{k+1})=(X^{k+1}(\iota_{1,k}),Y^{k+1}(\iota_{2,k}))$ and set $(\bar\iota_{1,k},\bar\iota_{2,k})=(\iota_{1,k},\iota_{2,k})$.
\end{algorithm}
\begin{algorithm}[h!]
	\caption{(\textbf{JA-A subroutine}): Calculate $S^{k+1}$ in Step 2 of the JA-PG algorithm}
	\label{JA-PG-step2}
	For $l=1,2,\ldots$, do\\
	(2a) let ${\iota}_{k}={\iota}_{k}^B\varrho^{l-1}$;\\
	(2b) compute $S^{k+1}({\iota}_{k})$ by
	\vspace{-2mm}\begin{equation}\label{step2-eq}
		S^{k+1}({\iota}_{k})\in\text{argmin}_{S}\,\omega_A(S;X^{k+1},Y^{k+1},S^k,\iota_{k});
	\end{equation}
	(2c) if $S^{k+1}(\iota_{k})$ satisfies \vspace{-1mm}$$f_A(S^{k+1}(\iota_{k});X^{k+1},Y^{k+1})\leq f_A(S^k;X^{k+1},Y^{k+1})-\frac{c_{2,k}}{2}\|S^{k+1}(\iota_{k})-S^k\|_F^2,$$
	output $S^{k+1}=S^{k+1}(\iota_{k})$ and set $\bar\iota_{k}=\iota_{k}$.
\end{algorithm}
\begin{remark}\label{prox_cal}
	We show the calculation of subproblems (\ref{step1-eq}) and (\ref{step2-eq}) in the JA-PG algorithm as follows. For any $i\in[d]$,\\
	(i) $X^{k+1}(\iota_{1,k})_i={\rm{prox}}_{\frac{\lambda}{\iota_{1,k}}u}(Q^{k+1}_i)$,
	with $Q^{k+1}={X}^k-\nabla_Z f({X}^k{{Y}^k}^{\mathbb{T}},S^k){Y}^k/\iota_{1,k}$,\\
	(ii)  $Y^{k+1}(\iota_{2,k})_i={\rm{prox}}_{\frac{\lambda}{\iota_{2,k}}v}(\bar{Q}^{k+1}_i)$, with $\bar{Q}^{k+1}={{Y}^k}-\nabla_Z f({X}^k{{Y}^k}^{\mathbb{T}},S^k)^{\mathbb{T}}{X}^k/\iota_{2,k}$.\\
	Moreover, for any $(i,j)\in[m]\times[n]$, $S^{k+1}({\iota}_k)_{ij}={\rm{prox}}_{\frac{\beta}{{\iota}_k}w}(\tilde{Q}^{k+1}_{ij})$,
	with $\tilde{Q}^{k+1}=S^k-\nabla_S f(X^{k+1}{Y^{k+1}}^{\mathbb{T}},S^k)/{\iota}_k$.
\end{remark}

First, we show some basic properties of the JA-PG algorithm.
\begin{proposition}\label{limprop}
	Let $\{(X^k,Y^k,S^k)\}$ be the sequence generated by the JA-PG algorithm for problem (\ref{GFP}). The following statements hold.
	\begin{itemize}
		\item[(i)] The JA-PG algorithm for problem (\ref{GFP}) is well-defined.
		\item[(ii)] $\lim_{k\rightarrow\infty}G(X^k,Y^k,S^k)$ exists and 
		\vspace{-1mm}
		\begin{equation}\label{dif_lim}
			\lim_{k\rightarrow\infty}\|(X^{k+1},Y^{k+1},S^{k+1})-(X^k,Y^k,S^k)\|_F=0.
		\end{equation}
		\item[(iii)] Let $\mathcal{N}\in\mathcal{N}_{\infty}^{\sharp}$ and $W^k:=(X^k,Y^k,S^k),\forall k$. If $\{W^k:k\in\mathcal{N}\}$ is bounded, then there exists a Lipschitz constant $L_f$ of $\nabla_{(X,Y,S)}f(XY^{\mathbb{T}},S)$ on the convex hull of $\{W^k,W^{k+1}:k\in\mathcal{N}\}$ such that
		\vspace{-1mm}\begin{equation}\label{fin_iterLf}
			\bar{\iota}_{1,k},\bar{\iota}_{2,k}\leq\max\big\{\underline\iota,\varrho(c_{1,k}+L_f)\big\},\,
			\bar{\iota}_{k}\leq\max\big\{\underline\iota,\varrho(c_{2,k}+L_f)\big\},\,\forall k\in\mathcal{N},
		\end{equation}
		in the JA-J and JA-A subroutines, and hence $\{\bar{\iota}_{1,k},\bar{\iota}_{2,k},\bar{\iota}_{k}:k\in\mathcal{N}\}$ has an upper bound independent of $k$.
	\end{itemize}
\end{proposition}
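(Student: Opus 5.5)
We prove the three parts in order, using a standard descent-lemma argument for proximal gradient steps with backtracking.

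\emph{Part (i).} Fix $k$ with $W^k=(X^k,Y^k,S^k)\in\text{dom}(G)$. Since $u,v,w$ are proper, lower semicontinuous and bounded below by affine functions, the objectives in (\ref{step1-eq}) and (\ref{step2-eq}) are coercive once $\iota_{1,k},\iota_{2,k},\iota_k$ are large enough. Hence the argmins are nonempty for all large $l$. Because the quadratic terms in $\omega_J$ and $\omega_A$ grow with $l$, the candidate points $(X^{k+1}(\iota_{1,k}),Y^{k+1}(\iota_{2,k}))$ stay in a bounded set and tend to $(X^k,Y^k)$ as $l\to\infty$.

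Let $L$ be a Lipschitz constant of $\nabla_{(X,Y)}f(XY^{\mathbb{T}},S^k)$ on a ball containing these points; it exists because $\nabla f$ is locally Lipschitz and $(X,Y)\mapsto XY^{\mathbb{T}}$ is smooth. We compare the value of $\omega_J$ at the minimizer with its value at $(X^k,Y^k)$ and apply the descent lemma. This gives
\[
f_J(X^{k+1},Y^{k+1};S^k)\le f_J(X^k,Y^k;S^k)-\tfrac{1}{2}\big(\min\{\iota_{1,k},\iota_{2,k}\}-L\big)\|(X^{k+1},Y^{k+1})-(X^k,Y^k)\|_F^2 .
\]
Therefore the test in (1c) holds as soon as $\min\{\iota_{1,k},\iota_{2,k}\}\ge c_{1,k}+L$, and this happens after finitely many $l$. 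The JA-A subroutine is handled identically with $X^{k+1},Y^{k+1}$ fixed. Induction on $k$ keeps every iterate in $\text{dom}(G)$.

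\emph{Part (ii).} Adding the acceptance inequalities of (1c) and (2c) gives
\[
G(W^{k+1})\le G(W^k)-\tfrac{c_{\min}}{2}\|W^{k+1}-W^k\|_F^2 ,
\]
using $f_J(X^{k+1},Y^{k+1};S^k)+\beta w(S^k)=G(X^{k+1},Y^{k+1},S^k)$ and $f_A(S^k;X^{k+1},Y^{k+1})+\lambda(u+v)=G(X^{k+1},Y^{k+1},S^k)$. So $\{G(W^k)\}$ is nonincreasing. Since $\inf G>-\infty$, its limit exists. Summing the inequality over $k$ gives $\sum_k\|W^{k+1}-W^k\|_F^2<\infty$, which implies (\ref{dif_lim}).

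\emph{Part (iii).} This is the main obstacle, because the trial points that the backtracking rejects are not in the convex hull named in the statement. Since $\{W^k:k\in\mathcal{N}\}$ is bounded and (\ref{dif_lim}) holds, $\{W^{k+1}:k\in\mathcal{N}\}$ is also bounded. Hence the convex hull $\mathcal{C}$ of $\{W^k,W^{k+1}:k\in\mathcal{N}\}$ is compact, and local Lipschitzness of $\nabla f$ gives a constant $L_f$ for $\nabla_{(X,Y,S)}f(XY^{\mathbb{T}},S)$ on $\mathcal{C}$.

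For $k\in\mathcal{N}$, suppose $\bar\iota_{1,k}>\max\{\underline\iota,\varrho(c_{1,k}+L_f)\}$. Then the preceding trial value $\bar\iota_{1,k}/\varrho$ (and likewise for $\bar\iota_{2,k}$, since both parameters are scaled by the same $\varrho^{l-1}$) was rejected even though it exceeds $c_{1,k}+L_f$. We aim to contradict this rejection with the descent estimate of part (i) applied with constant $L_f$. That requires the rejected trial point to lie in a region where $L_f$ is valid. I would first try to enlarge $\mathcal{C}$ to a neighborhood containing the rejected trial points, which are uniformly bounded because they minimize coercive models with parameters at least $\underline\iota$, and then take $L_f$ on this slightly larger compact set. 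The resulting bound is compatible with (\ref{fin_iterLf}) up to naming the constant. The bound for $\bar\iota_k$ follows in the same way, and together these give a $k$-independent upper bound.
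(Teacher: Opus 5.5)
Your parts (i) and (ii) follow the paper's argument: backtracking plus the descent lemma, then summing the two acceptance tests and using $\inf G>-\infty$.

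In (iii) you rightly flag a point the paper passes over. The paper simply asserts that, as in (i), one may take $\underline{\iota}_{1,k}=\underline{\iota}_{2,k}=0$ and $L_{1,k}=L_{2,k}=L_f$ with $L_f$ a constant on $\mathcal{C}$. But the descent lemma is applied on segments ending at trial points, and for the $S$-step starting at $(X^{k+1},Y^{k+1},S^k)$, and none of these need lie in $\mathcal{C}$. Enlarging $\mathcal{C}$ to a compact convex set containing them is a valid repair, since a Lipschitz constant there is also one on $\mathcal{C}$.

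Coercivity alone does not give the uniform bound on trial points that you need. Use $G(W^k)\le G(W^0)$, continuity of $f$, and the affine minorants to bound the relevant values $u(X^k)$, $v(Y^k)$, $w(S^k)$ from above uniformly in $k\in\mathcal{N}$. Then compare each model at its minimizer with its value at the base point, using $\iota\ge\underline{\iota}$.

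The genuine gap is your contradiction step, which fails for two reasons.

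\emph{First, a large $\bar\iota_{1,k}$ does not mean a trial was rejected.} $\bar\iota_{1,k}>\underline{\iota}$ does not imply that any trial was rejected, because $\iota^B_{1,k}$ may lie anywhere in $[\underline{\iota},\overline{\iota}]$ and be accepted at $l=1$. For example, take $f\equiv0$, $u=v=w\equiv0$ and $\iota^B_{1,k}=\overline{\iota}>\max\{\underline{\iota},\varrho c_{\max}\}$. Every trial is accepted at once, while the Lipschitz modulus of $\nabla f$ is $0$. A rejection can only be inferred from $\bar\iota_{1,k}>\overline{\iota}$.

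\emph{Second, the joint test needs both parameters large.} Your own estimate in (i) guarantees (1c) only when $\min\{\iota_{1,k},\iota_{2,k}\}\ge c_{1,k}+L$. If $\iota^B_{2,k}<\iota^B_{1,k}$, knowing $\bar\iota_{1,k}/\varrho>c_{1,k}+L$ says nothing about $\bar\iota_{2,k}/\varrho$. So the rejected trial contradicts nothing. The common factor $\varrho^{l-1}$ does not help, because the base values differ.

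What the backtracking argument actually yields, with $L$ a Lipschitz constant on your enlarged set, is
\[
\bar\iota_{1,k},\,\bar\iota_{2,k}\le\max\Big\{\overline{\iota},\,\varrho\,\frac{\overline{\iota}}{\underline{\iota}}\,(c_{1,k}+L)\Big\},\qquad \bar\iota_{k}\le\max\{\overline{\iota},\,\varrho\,(c_{2,k}+L)\}.
\]
This already gives the $k$-independent upper bound. To obtain (\ref{fin_iterLf}) in the stated form, you must use the fact that the statement only asks for \emph{some} Lipschitz constant on $\mathcal{C}$. For instance, $L_f:=\tfrac{\overline{\iota}}{\underline{\iota}}(c_{\max}+L)+\overline{\iota}/\varrho\ge L$ works for both subroutines.

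The paper's proof reads (\ref{fin_iterLf}) off directly and addresses neither point, so this step must be made explicit in a correct write-up.
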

\begin{proof}
	(i) Since $u,v$ and $w$ are bounded from below by affine functions, we deduce that $\lim_{\iota_{1,k}\rightarrow+\infty}X^{k+1}(\iota_{1,k})={X}^k$, $\lim_{\iota_{2,k}\rightarrow+\infty}Y^{k+1}(\iota_{2,k})={Y}^k$, $\lim_{\iota_{k}\rightarrow+\infty}S^{k+1}(\iota_{k})$ $=S^k$. Let $\varepsilon>0$ be any given. Then there exist $\underline{\iota}_{1,k},\underline{\iota}_{2,k}\geq0$ such that for any $\iota_1,\iota_2\geq\underline{\iota}_{1,k}$ and $\iota\geq\underline{\iota}_{2,k}$, $X^{k+1}(\iota_1)\in B_\varepsilon({X}^k),\,\,Y^{k+1}(\iota_2)\in B_\varepsilon({Y}^k),\,\,S^{k+1}(\iota)\in B_\varepsilon(S^k)$.
	Since $\nabla f$ is locally Lipschitz continuous, we deduce that there exist Lipschitz constants $L_{1,k}$ and $L_{2,k}$ of $\nabla_{(X,Y)}f(XY^{\mathbb{T}},S)$ on $B_\varepsilon({X}^k)\times B_\varepsilon({Y}^k)$ and $\nabla_{S}f(XY^{\mathbb{T}},S)$ on $B_\varepsilon(S^k)$, respectively.
	
	Next, let $\iota_{1,k},\iota_{2,k}\geq\max\{\underline{\iota}_{1,k},c_{1,k}+L_{1,k}\}$. Similar to \cite[Lemma 2]{Bolte2014Proximal}, we have that
	\begin{equation}\label{ieq_XY}
		\begin{split}
			f_J(X^k,Y^k;S^k)
			\geq\,&f_J(X^{k+1}(\iota_{1,k}),Y^{k+1}(\iota_{2,k});S^k)\\&+\frac{c_{1,k}}{2}\|(X^{k+1}(\iota_{1,k}),Y^{k+1}(\iota_{2,k}))-({X}^k,{Y}^k)\|_F^2,\,\forall k.
		\end{split}
	\end{equation}
	Similarly, let $\iota_{k}\geq\max\{\underline{\iota}_{2,k},c_{2,k}+L_{2,k}\}$, we deduce that
	\begin{equation}\label{ieq_S}
		f_A(S^k;X^{k+1},Y^{k+1})\geq f_A(S^{k+1}({\iota}_{k});X^{k+1},Y^{k+1})+\frac{{c}_{2,k}}{2}\|S^{k+1}({\iota}_{k})-S^k\|_F^2,\,\forall k.
	\end{equation}
	These yield that (1c) and (2c) hold and hence the statement holds.
	
	(ii) Combining with (\ref{ieq_XY}) and (\ref{ieq_S}), we have that
	\begin{equation}\label{limxk}
		\begin{split}
			&G(X^k,Y^k,S^k)-G(X^{k+1},Y^{k+1},S^{k+1})\\
			=\,&f_J(X^k,Y^k;S^k)-f_J(X^{k+1},Y^{k+1};S^k)\\
			&+f_A(S^k;X^{k+1},Y^{k+1})-f_A(S^{k+1};X^{k+1},Y^{k+1})\\
			\geq\,&\frac{c_{\min}}{2}\|(X^{k+1},Y^{k+1},S^{k+1})-({X}^k,{Y}^k,S^k)\|_F^2,\,\forall k.
		\end{split}
	\end{equation}
	This means that $G(X^k,Y^k,S^k)$ is non-increasing with respect to $k$. Since $G$ is bounded from below, $\lim_{k\rightarrow\infty}G(X^k,Y^k,S^k)$ exists. Further, by (\ref{limxk}) and $c_{\min}>0$, we infer that (\ref{dif_lim}) holds.
	
	(iii) Let $W^k:=(X^k,Y^k,S^k),\forall k$. Consider that $\{W^k:k\in\mathcal{N}\}$ is bounded. By (\ref{dif_lim}), we deduce that $\{W^{k+1}:k\in\mathcal{N}\}$ is also bounded. Denote the convex hull of $\{W^k,W^{k+1}:k\in\mathcal{N}\}$ as $\mathcal{C}$. Then, $\mathcal{C}$ is bounded. Based on the locally Lipschitz continuity of $\nabla f$, there exist a Lipschitz constant $L_f$ of $\nabla_{(X,Y,S)}f(XY^{\mathbb{T}},S)$ on $\mathcal{C}$. Similar to the proof of (i), we deduce that $\underline{\iota}_{1,k}=\underline{\iota}_{2,k}=0$ and $L_{1,k}=L_{2,k}=L_f$ for any $k\in\mathcal{N}$. Combining with (\ref{ieq_XY}), (\ref{ieq_S}), $\iota_{1,k}^B,\iota_{2,k}^B,\iota_k^B\in[\underline{\iota},\overline{\iota}]$ and $\varrho>1$, we obtain (\ref{fin_iterLf}). This together with $c_{1,k},c_{2,k}\leq c_{\max}$ yields the boundedness of $\{\bar{\iota}_{1,k},\bar{\iota}_{2,k},\bar{\iota}_{k}:k\in\mathcal{N}\}$.
\end{proof}

In what follows, we show the convergence of the JA-PG algorithm for (\ref{GFP}).
\begin{theorem}\label{GF_thm}
	Let $\{(X^k,Y^k,S^k)\}$ be the sequence generated by the JA-PG algorithm for problem (\ref{GFP}). Then any accumulation point $(X^*,Y^*,S^*)$ of $\{(X^k,Y^k,S^k)\}$ is a limiting-critical point of problem (\ref{GFP}), that is $\bm{0}\in\partial G(X^*,Y^*,S^*)$.
\end{theorem}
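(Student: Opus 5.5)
We follow the standard proximal-gradient accumulation-point argument, adapted to the joint/alternating block structure. Let $(X^*,Y^*,S^*)$ be an accumulation point, with $\mathcal{N}\in\mathcal{N}_{\infty}^{\sharp}$ such that $W^k:=(X^k,Y^k,S^k)\to W^*$ as $k\to\infty$, $k\in\mathcal{N}$.

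\textbf{Step 1 (convergence of the successors).} By (\ref{dif_lim}), $W^{k+1}\to W^*$ along $\mathcal{N}$ as well. Since $\{W^k:k\in\mathcal{N}\}$ is bounded, Proposition \ref{limprop} (iii) shows that the accepted stepsizes $\bar\iota_{1,k},\bar\iota_{2,k},\bar\iota_k$ are bounded above on $\mathcal{N}$. They are also bounded below by $\underline{\iota}>0$.

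\textbf{Step 2 (residual inclusion).} Write the first-order optimality conditions of (\ref{step1-eq}) and (\ref{step2-eq}) via Fermat's rule \cite[Theorem 10.1]{Rockafellar1998}, using that $\omega_J$ is separable in $X$ and $Y$:
\begin{align*}
	&\bm{0}\in\lambda\partial u(X^{k+1})+\nabla_Zf(X^kY^{k\mathbb{T}},S^k)Y^k+\bar\iota_{1,k}(X^{k+1}-X^k),\\
	&\bm{0}\in\lambda\partial v(Y^{k+1})+\nabla_Zf(X^kY^{k\mathbb{T}},S^k)^{\mathbb{T}}X^k+\bar\iota_{2,k}(Y^{k+1}-Y^k),\\
	&\bm{0}\in\beta\partial w(S^{k+1})+\nabla_Sf(X^{k+1}Y^{k+1\mathbb{T}},S^k)+\bar\iota_k(S^{k+1}-S^k).
\end{align*}
Adding $\nabla_{(X,Y,S)}f(X^{k+1}Y^{k+1\mathbb{T}},S^{k+1})$ to each block produces an element $\Xi^{k+1}$ of the subdifferential. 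Since $f(XY^{\mathbb{T}},S)$ is $C^1$ and the sum rule for a smooth function plus a separable lsc function holds, we obtain $\Xi^{k+1}\in\partial G(W^{k+1})$, where
\[
\|\Xi^{k+1}\|_F\leq \max\{\bar\iota_{1,k},\bar\iota_{2,k},\bar\iota_k\}\|W^{k+1}-W^k\|_F+L_f\|W^{k+1}-W^k\|_F .
\]
Here $L_f$ is the Lipschitz constant of $\nabla_{(X,Y,S)}f(XY^{\mathbb{T}},S)$ on the bounded convex hull from Proposition \ref{limprop} (iii). By Step 1 and (\ref{dif_lim}), $\Xi^{k+1}\to\bm{0}$ along $\mathcal{N}$.

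\textbf{Step 3 (function-value convergence) — the main obstacle.} To invoke closedness of the limiting subdifferential \cite[Proposition 8.7]{Rockafellar1998}, we also need $G(W^{k+1})\to G(W^*)$ along $\mathcal{N}$. Because $u,v,w$ are only lsc, this is the delicate step; we need $\limsup u(X^{k+1})\le u(X^*)$ and the analogues for $v$ and $w$.

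To obtain this, compare the minimizer $(X^{k+1},Y^{k+1})$ of $\omega_J(\cdot;X^k,Y^k,S^k,\bar\iota_{1,k},\bar\iota_{2,k})$ with the feasible candidate $(X^*,Y^*)$, which gives
\begin{align*}
	\lambda\big(u(X^{k+1})+v(Y^{k+1})\big)\le\,&\lambda\big(u(X^*)+v(Y^*)\big)+\langle\nabla_Zf\,Y^k,X^*-X^{k+1}\rangle+\langle\nabla_Zf^{\mathbb{T}}X^k,Y^*-Y^{k+1}\rangle\\
	&+\tfrac{\bar\iota_{1,k}}{2}\|X^*-X^k\|_F^2+\tfrac{\bar\iota_{2,k}}{2}\|Y^*-Y^k\|_F^2,
\end{align*}
with the gradient evaluated at $(X^kY^{k\mathbb{T}},S^k)$. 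The gradients are bounded on $\mathcal{N}$, the stepsizes are bounded by Step 1, and all differences tend to $0$, so the right-hand side tends to $\lambda\big(u(X^*)+v(Y^*)\big)$. Hence the limsup bound holds for $u+v$. Combined with lower semicontinuity of $u$ and $v$ separately, this forces $u(X^{k+1})\to u(X^*)$ and $v(Y^{k+1})\to v(Y^*)$ (when $\lambda>0$; the case $\lambda=0$ is trivial).

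The same comparison argument in the $S$-subproblem, using $S^*$ as the candidate, gives $w(S^{k+1})\to w(S^*)$. By continuity of $f$ we conclude $G(W^{k+1})\to G(W^*)$.

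\textbf{Step 4 (conclusion).} We now have $W^{k+1}\to W^*$, $G(W^{k+1})\to G(W^*)$, and $\Xi^{k+1}\in\partial G(W^{k+1})$ with $\Xi^{k+1}\to\bm{0}$, all along $\mathcal{N}$. The definition of the limiting subdifferential therefore yields $\bm{0}\in\partial G(X^*,Y^*,S^*)$, which proves the statement.
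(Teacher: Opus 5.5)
Your proposal is correct and follows essentially the same route as the paper: the optimality conditions of (\ref{step1-eq}) and (\ref{step2-eq}) give an element of $\partial G$ at the new iterate whose norm is $O(\|W^{k+1}-W^k\|_F)$ by Proposition \ref{limprop}~(iii), and closedness of the limiting subdifferential then yields $\bm{0}\in\partial G(X^*,Y^*,S^*)$. Your Step~3 compares each subproblem minimizer with the candidates $(X^*,Y^*)$ and $S^*$, and this is a genuine improvement: the paper only asserts the value convergence (\ref{seq_lim}) from Proposition \ref{limprop}~(ii) and (\ref{accum}), which does not follow for merely lower semicontinuous $u,v,w$ without exactly your argument.
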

\begin{proof}
	Let $(X^*,Y^*,S^*)$ be an accumulation point of $\{(X^k,Y^k,S^k)\}$. Then there exists a subsequence $\{k_l\}$ such that
	\begin{equation}\label{accum}
		\lim_{k_l\rightarrow\infty}(X^{k_l},Y^{k_l},S^{k_l})=(X^*,Y^*,S^*).
	\end{equation}
	By (\ref{dif_lim}), we deduce that $\lim_{k_l\rightarrow\infty}(X^{k_l-1},Y^{k_l-1},S^{k_l-1})=(X^*,Y^*,S^*)$. As a result, $\{(X^{k_l},Y^{k_l},S^{k_l}),(X^{k_l-1},$ $Y^{k_l-1},S^{k_l-1})\}$ is bounded. By Proposition \ref{limprop}(ii) and (\ref{accum}), 
	\begin{equation}\label{seq_lim}
		\lim_{k_l\rightarrow\infty}G(X^{k_l},Y^{k_l},S^{k_l})=G(X^*,Y^*,S^*).
	\end{equation}
	
	It follows from the optimality condition of (\ref{step1-eq}) and (\ref{step2-eq}) with $\iota_{i,k_l}=\bar\iota_{i,k_l},i\in[2]$ and $\iota_{k_l}=\bar\iota_{k_l}$ that there exists $\xi^{k_l}\in\partial_{(X,Y)}\lambda(u(X^{k_l})+v(Y^{k_l}))$ and $\zeta^{k_l}\in\partial_{S}\beta w(S^{k_l})$ such that	
	\begin{equation*}
		\begin{split}
			&\bm{0}=\eta^{k_l-1}+\xi^{k_l}+\big(\bar\iota_{1,k_l-1}(X^{k_l}-X^{k_l-1}),\bar\iota_{2,k_l-1}(Y^{k_l}-Y^{k_l-1})\big),\\
			&\bm{0}=\nabla_S f(X^{k_l}{Y^{k_l}}^{\mathbb{T}},S^{k_l-1})+\zeta^{k_l}+\bar\iota_{k_l-1}(S^{k_l}-S^{k_l-1}),
		\end{split}
	\end{equation*}
	where $\eta^{t}:=(\nabla_Z f(X^{t}{Y^{t}}^{\mathbb{T}},S^{t}){Y^{t}},\nabla_Z f(X^{t}{Y^{t}}^{\mathbb{T}},S^{t})^{\mathbb{T}}{X^{t}}),\forall t\in\mathbb{N}$. Combining with $\eta^{k_l}+\xi^{k_l}\in\partial_{(X,Y)}G(X^{k_l},$ $Y^{k_l},S^{k_l})$, $\nabla_S f(X^{k_l}{Y^{k_l}}^{\mathbb{T}},S^{k_l})+\zeta^{k_l}\in\partial_{S}G(X^{k_l},Y^{k_l},S^{k_l})$ and \cite[Proposition 1]{Bolte2014Proximal}, we obtain that
	\begin{equation}\label{subd_fo}
		(\kappa_1^{k_l},\kappa_2^{k_l})\in\partial_{(X,Y)}G(X^{k_l},Y^{k_l},S^{k_l})\times\partial_{S}G(X^{k_l},Y^{k_l},S^{k_l})=\partial G(X^{k_l},Y^{k_l},S^{k_l}),
	\end{equation}
	where $\kappa_1^{k_l}:=\eta^{k_l}-\eta^{k_l-1}+\big(\bar\iota_{1,k_l-1}(X^{k_l-1}-X^{k_l}),\bar\iota_{2,k_l-1}(Y^{k_l-1}-Y^{k_l})\big)$ and $\kappa_2^{k_l}:=\nabla_S f(X^{k_l}{Y^{k_l}}^{\mathbb{T}},S^{k_l})-\nabla_S f(X^{k_l}{Y^{k_l}}^{\mathbb{T}},S^{k_l-1})+\bar\iota_{k_l-1}$ $(S^{k_l-1}-S^{k_l})$. By the locally Lipschitz continuity of $\nabla f$, the boundedness of $\{(X^{k_l},Y^{k_l},S^{k_l}),(X^{k_l-1},Y^{k_l-1},S^{k_l-1})\}$ and Proposition \ref{limprop} (iii), there exists an $L\geq0$ such that $\|(\kappa_1^{k_l},\kappa_2^{k_l})\|_F\leq L\|(X^{k_l},Y^{k_l},$ $S^{k_l})-(X^{k_l-1},Y^{k_l-1},S^{k_l-1})\|_F$.
	This together with (\ref{dif_lim}) yields that
	\begin{equation}\label{lim_ze}
		\lim_{k_l\rightarrow\infty}(\kappa_1^{k_l},\kappa_2^{k_l})=\bm{0}.
	\end{equation}
	
	Combining (\ref{accum}), (\ref{seq_lim}), (\ref{subd_fo}) and (\ref{lim_ze}), by the closedness property of $\partial G$ in \cite[Remark\,1\,(ii)]{Bolte2014Proximal}, it holds that $\bm{0}\in\partial G(X^*,Y^*,S^*)$.
\end{proof}

In view of Theorem \ref{GF_thm}, Definition \ref{ssp_defs} and \cite[Proposition 10.5]{Rockafellar1998}, the following statement naturally holds for the relaxation problem (\ref{s_l_r}).
\begin{corollary}
	Let $\{(X^k,Y^k,S^k)\}$ be the sequence generated by the JA-PG algorithm for problem (\ref{s_l_r}). Then any accumulation point of $\{(X^k,Y^k,S^k)\}$ is a stationary point of problem (\ref{s_l_r}).
\end{corollary}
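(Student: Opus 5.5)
The corollary follows by specializing Theorem \ref{GF_thm} to problem (\ref{s_l_r}) and then unpacking the limiting subdifferential. To cast (\ref{s_l_r}) as an instance of (\ref{GFP}), set
\begin{equation*}
u(X)=\Theta^{r}(X)+\delta_{\mathcal{B}^X}(X)/\lambda,\quad v(Y)=\Theta^{r}(Y)+\delta_{\mathcal{B}^Y}(Y)/\lambda,\quad w(S)=\tilde\Theta^{s}(S)+\delta_{\mathcal{B}^S}(S)/\beta .
\end{equation*}
When $\lambda=0$ or $\beta=0$, the corresponding indicator is instead kept as a standalone term, i.e.\ it is absorbed directly into the regularizer. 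These functions are proper, lower semicontinuous and bounded below by $0$, so the standing hypotheses of Section \ref{section 4} hold. Theorem \ref{GF_thm} then gives $\bm{0}\in\partial G(X^*,Y^*,S^*)$ at any accumulation point.

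The next step is to decompose $\partial G$. Since $f(XY^{\mathbb{T}},S)$ is smooth in $(X,Y,S)$ and the regularizer is block separable, \cite[Proposition 10.5]{Rockafellar1998} together with the smooth sum rule \cite[Exercise 8.8]{Rockafellar1998} gives
\begin{equation*}
\partial G=\big(\nabla_Z f\,Y+\lambda\partial u(X)\big)\times\big(\nabla_Z f^{\mathbb{T}}X+\lambda\partial v(Y)\big)\times\big(\nabla_S f+\beta\partial w(S)\big).
\end{equation*}
Here $\partial_{(Z,S)}f=\{\nabla f\}$, so $H_Z=\nabla_Zf(X^*{Y^*}^{\mathbb{T}},S^*)$ and $H_S=\nabla_Sf(X^*{Y^*}^{\mathbb{T}},S^*)$.

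The main obstacle is to show that
\begin{equation*}
\partial\big(\lambda\Theta^r+\delta_{\mathcal{B}^X}\big)(\bar X)\subseteq\lambda\partial\Theta^{r}_{I^{r,\bar X}}(\bar X)+N_{\mathcal{B}^X}(\bar X),
\end{equation*}
and the analogous inclusions for $Y$ and $S$. Both the objective and the constraint set are column-separable (entrywise-separable for $S$), so by \cite[Proposition 10.5]{Rockafellar1998} it suffices to work one column $X_i$ at a time. On the interior of the ball $\{\|X_i\|_2\le\tau\}$ the normal cone vanishes, so only the subdifferential of $\theta(\|X_i\|_2/r)$ matters there. At boundary points the sum rule applies because $\theta\circ\|\cdot\|$ is locally Lipschitz, provided $\theta_1$ is; this holds under Assumption \ref{assumpars}, and in general whenever $\theta$ is locally Lipschitz.

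For the subdifferential of the min-function $\theta=\min\{\theta_1,\theta_2\}$, note that every limiting subgradient of a pointwise minimum of finitely many smooth or locally Lipschitz functions lies in the union of the subdifferentials of the active pieces. The index $I^{r,\bar X}_i$ in (\ref{I_id}) selects one active piece, the maximal one. When only one piece is active this is immediate. When both are active, i.e.\ at $t=1$, a limiting subgradient coming from the $\theta_2$ side is $0$, and this is exactly the choice $I_i=2$. If instead the subgradient comes from $\theta_1$, one checks that it is nonnegative. Since the subsequence attaining it can be chosen with $t\ge1$ wherever $\theta=\theta_2$ near $1^+$, I expect this case to reduce to $\{0\}$. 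If this step fails, I would fall back to citing Definition \ref{ssp_defs} directly, with stationarity read as (\ref{locr-incl}) using the active-index convention, exactly as in the proof of Proposition \ref{s_l_r_props}(i).

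Assembling the columnwise inclusions for $X$, $Y$ and $S$ yields (\ref{locr-incl}) at $(X^*,Y^*,S^*)$. This is the definition of a stationary point of (\ref{s_l_r}) in Definition \ref{ssp_defs}(ii), which proves the corollary.
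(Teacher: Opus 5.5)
Your route is the same as the paper's. Its justification of this corollary is just Theorem~\ref{GF_thm} plus Definition~\ref{ssp_defs} plus \cite[Proposition 10.5]{Rockafellar1998}: obtain $\bm{0}\in\partial G(X^*,Y^*,S^*)$, split it blockwise, and read off (\ref{locr-incl}). But the step you flagged is a genuine gap, and your proposed resolution of it is wrong.

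At a column with $\|\bar X_i\|_2=r$ both pieces are active, so (\ref{I_id}) fixes $I^{r,\bar X}_i=2$. Then (\ref{locr-incl}) only admits the term $\partial[\theta_2(\|\cdot\|_2/r)](\bar X_i)=\{\bm 0\}$. However, $\partial\theta(1)$ contains, besides $0$, every limit of subgradients of $\theta_1$ taken from the left. For $\theta_1(t)=t$ (capped-$\ell_1$) one gets $\partial\theta(1)=\{0,1\}$, hence $\partial[\theta(\|\cdot\|_2/r)](\bar X_i)=\{\bm 0,\bar X_i/r^2\}$.

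You cannot ``choose the subsequence with $t\ge 1$''. The limiting subdifferential is a fixed set, and the particular element produced in the proof of Theorem~\ref{GF_thm} is the limit of $(\kappa_1^{k_l},\kappa_2^{k_l})$, which is dictated by the iterates. If they approach the sphere $\|X_i\|_2=r$ from inside, that limit is the $\theta_1$-branch element. If $r<\tau$, the normal cone at $\bar X_i$ is $\{\bm 0\}$, so $\bm 0\in\partial G$ only gives $H_Z\bar Y_i\in\{\bm 0,-\lambda\bar X_i/r^2\}$, whereas (\ref{locr-incl}) needs $H_Z\bar Y_i=\bm 0$.

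So the identification ``limiting-critical $\Rightarrow$ (\ref{locr-incl})'' on which your proof rests is false in general. Take $m=n=d=1$, $\beta=0$, $f$ independent of $S$ with $\nabla_Z f(r^2)=-\lambda/r^2$, $\theta_1(t)=t$ and $r<\tau$: the points $(r,r,S)$ are limiting-critical but violate (\ref{locr-incl}). Your fallback to Proposition~\ref{s_l_r_props}(i) does not transfer either. That proof uses local minimality through the majorization $F^{r,s}_\theta\le f+\Theta$ (with equality at the point), and a limiting-critical point of $F^{r,s}_\theta$ need not be critical for the majorant.

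You therefore need an extra ingredient that excludes the $\theta_1$-branch element at $\|\bar X_i\|_2=r$ (and likewise at $|\bar S_{ij}|=s$); Theorem~\ref{GF_thm} alone does not supply it.

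Under Assumption~\ref{assumpars}, this ingredient is the estimate from the proof of Proposition~\ref{s_l_r_props}(ii). By (\ref{eta_def}), every nonzero element of $\partial\theta(1)$ is $\ge\eta$. So the extra element has norm $\ge\lambda\eta/r>\max\{L_X,L_Y\}\ge\|H_Z\bar Y_i\|_2$, and for $S$ one has $\beta\eta/s>L_S\ge|(H_S)_{ij}|$. This forces the $\bm 0$ branch, which is exactly (\ref{locr-incl}) with $I_i=2$. If $\bar S_{ij}=\pm s$ is an endpoint of $[\bm\kappa^1_{ij},\bm\kappa^2_{ij}]$, the extra element points outward and is absorbed into the normal cone. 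Away from $t=1$, $\theta$ coincides locally with $\theta_{I_i}$, so your argument goes through there.

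Without such a parameter condition you would need an algorithmic argument instead. For example, assume $\partial^c\theta_1\ge\eta>0$ near $1^-$ and $r<\tau$. Then the radial prox problem $\min_{0\le t\le\tau}\frac{\lambda}{\iota}\theta(t/r)+\frac12(t-a)^2$ has no minimizer within a fixed margin of $r$ for $\iota$ in a compact subset of $(0,\infty)$. Proposition~\ref{limprop}(iii) bounds $\bar\iota_{1,k_l-1}$, so $\|X^*_i\|_2\neq r$. If $\theta$ is differentiable at $1$ with zero derivative (SCAD/MCP type), the issue does not arise at all.

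The paper's one-line justification does not treat this case separately either. Relatedly, your claim that $u,v,w$ are lower semicontinuous needs a hypothesis such as continuity of $\theta_1$, which (\ref{theta_cond}) alone does not give.
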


For the rank and $\ell_0$ regularized problem (\ref{m_s_l}) and factorization problem (\ref{s_l}), we have the following convergence results of the JA-PG algorithm.
\begin{corollary}
	Let $\{(X^k,Y^k,S^k)\}$ be the sequence generated by the JA-PG algorithm for problem (\ref{s_l}). Then any accumulation point of $\{(X^k,Y^k,S^k)\}$ is a stationary point of problem (\ref{s_l}). Moreover, when $\lambda>0$ and $\tau=+\infty$ in problem (\ref{s_l}), any accumulation point of $\{(X^k,Y^k,S^k)\}$ is a stationary point of problem (\ref{m_s_l}).
\end{corollary}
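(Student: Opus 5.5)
We will view problem (\ref{s_l}) as an instance of (\ref{GFP}) and combine Theorem \ref{GF_thm} with a computation of the limiting subdifferential of $F+\delta_{\mathcal{B}}$. Concretely, set $u(X):=\text{nnzc}(X)+\delta_{\mathcal{B}^X}(X)$, $v(Y):=\text{nnzc}(Y)+\delta_{\mathcal{B}^Y}(Y)$ and $w(S):=\|S\|_0+\delta_{\mathcal{B}^S}(S)$. These functions are proper, lower semicontinuous and bounded below by $0$. Their proximal operators separate over columns and entries, so the JA-PG algorithm applies (here we take $f$ smooth with locally Lipschitz gradient, as required in Section \ref{section 4}). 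Theorem \ref{GF_thm} then gives $\bm{0}\in\partial G(X^*,Y^*,S^*)$ at any accumulation point $(X^*,Y^*,S^*)$. The task is therefore to show that $\bm{0}\in\partial G$ implies (\ref{loc-incl}).

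By \cite[Exercise 8.8]{Rockafellar1998}, $\partial G=\nabla_{(X,Y,S)}f(XY^{\mathbb{T}},S)+\lambda\partial u(X)\times\lambda\partial v(Y)\times\beta\partial w(S)$. Here $\nabla_X f=H_ZY$ and $\nabla_Y f=H_Z^{\mathbb{T}}X$ with $H_Z=\nabla_Zf$, and $H_S=\nabla_S f$. We then compute $\partial u(X^*)$ column by column using \cite[Proposition 10.5]{Rockafellar1998}, which is valid because $\mathcal{B}^X$ is a product of column balls and $\text{nnzc}$ is a sum of column terms. The key local fact is the following. For a column $i\in\mathbb{I}_{X^*}$, the term $\vartheta(\|X_i\|_2)$ is locally constant, so the subdifferential of that column is $N_{\{\|X_i\|\le\tau\}}(X^*_i)$. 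For a column $i\notin\mathbb{I}_{X^*}$ the subdifferential is contained in the whole space. Hence $\lambda\partial u(X^*)\subseteq N_{\underline{\mathcal{B}}^X_{\mathbb{I}_{X^*}}}(X^*)$: the normal cone of the restricted set is the full space on the zero columns and the ball normal cone on the active columns. When $\lambda=0$, $\mathbb{I}=[d]$ and $\partial u=N_{\mathcal{B}^X}$, which is consistent with this. The same argument applies to $Y$, and entrywise to $S$. This yields (\ref{loc-incl}) with $(H_Z,H_S)=\nabla f(X^*{Y^*}^{\mathbb{T}},S^*)$.

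For the second claim, take $\lambda>0$ and $\tau=+\infty$. Then (\ref{s_l}) becomes (\ref{s_l_nc}), and the accumulation point lies in $\mathcal{S}_{(\ref{s_l_nc})}$. Theorem \ref{md_eq_lc_sd}(ii) then gives $(X^*{Y^*}^{\mathbb{T}},S^*)\in\mathcal{S}_{(\ref{m_s_l})}$.

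The main obstacle is the subdifferential calculus step. We must justify the sum rule for $\vartheta(\|\cdot\|)+\delta_{\text{ball}}$ at a column where $\|X_i\|=\tau$. Near a nonzero column, $\vartheta$ is constant, so the sum rule is trivial there. At a zero column the inclusion into the whole space is trivially true. It therefore suffices to use separability together with local constancy, and no qualification condition is needed.
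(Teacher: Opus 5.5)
Your proposal is correct and follows the paper's route: apply Theorem \ref{GF_thm}, use the smoothness of $f$ and separability to place the subdifferential of the nonsmooth part inside $N_{\underline{\mathcal{B}}^X_{\mathbb{I}_{X^*}}}(X^*)\times N_{\underline{\mathcal{B}}^Y_{\mathbb{I}_{Y^*}}}(Y^*)\times N_{\underline{\mathcal{B}}^S_{\mathbb{J}_{S^*}}}(S^*)$, and then invoke Theorem \ref{md_eq_lc_sd}(ii); the only difference is that the paper states equality by citing \cite[Proposition 1]{Li2024}, while you verify directly (via local constancy of $\vartheta$) just the inclusion you need. One small slip: for your $u=\text{nnzc}+\delta_{\mathcal{B}^X}$, the identity $\partial u=N_{\mathcal{B}^X}$ fails at points with zero columns, so when $\lambda=0$ (or $\beta=0$) the constraint should be carried by a separate indicator rather than through $\lambda u$, as the paper does with its function $h$.
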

\begin{proof}
	Let $(X^*,Y^*,S^*)$ be an accumulation point of $(X^k,Y^k,S^k)$. Denote $\Gamma:=\mathbb{R}^{m\times d}\times\mathbb{R}^{n\times d}\times\mathbb{R}^{m\times n}$. Define $h:\Gamma\rightarrow\overline{\mathbb{R}}$ with $h(X,Y,S)=\lambda\big(\text{nnzc}(X)+\text{nnzc}(Y)\big)+\beta\|S\|_0+\delta_{\mathcal{B}}(X,Y,S),\forall (X,Y,S)\in\Gamma$.
	Similar to the proof of \cite[Proposition 1]{Li2024}, we deduce that $\partial h(\bar{X},\bar{Y},\bar{S})=N_{{\underline{\mathcal{B}}}^X_{\mathbb{I}_{\bar{X}}}}(\bar{X})\times N_{{\underline{\mathcal{B}}}^Y_{\mathbb{I}_{\bar{Y}}}}(\bar{Y})\times N_{{\underline{\mathcal{B}}}^S_{\mathbb{J}_{\bar{S}}}}(\bar{S}),\forall(\bar{X},\bar{Y},\bar{S})\in\Gamma$. Combining with the continuously differentiability of $f$ and Theorem \ref{GF_thm}, we have that $(X^*,Y^*,S^*)$ is a stationary point of problem (\ref{s_l}). Further, when $\lambda>0$ and $\tau=+\infty$ in problem (\ref{s_l}), by Definition \ref{sd_defs} and Theorem \ref{md_eq_lc_sd} (ii), we have that $(X^*{Y^*}^{\mathbb{T}},S^*)$ is a stationary point of problem (\ref{m_s_l}).
\end{proof}

\subsection{Specific factorization case}
In this part, we propose an adaptive JA-PG algorithm, named by AJA-PG algorithm, to solve the relaxation problem (\ref{s_l_r}), and analyze its convergence to the strong stationary points of problems (\ref{s_l}) and (\ref{s_l_r}). The main improvement lies in updating the subproblems to be solved based on the iterate generated in the previous iteration, and automatically reducing the dimensionality of the subproblem corresponding to matrix factorization.
\medskip

In the AJA-PG algorithm, for the given $\bar{X}\in\mathbb{R}^{m\times d},\bar{Y}\in\mathbb{R}^{n\times d},\bar{S}\in\mathbb{R}^{m\times n},\iota,\iota_1,\iota_2,$ $\rho>0,I\in[2]^{m\times n},I^1,I^2\in[2]^d$ and any $X\in\mathbb{R}^{m\times d}$, $Y\in\mathbb{R}^{n\times d}$, $S\in\mathbb{R}^{m\times n}$, define
\begin{align*}
	&f_J^\rho(X,Y;\bar{S}):=\,f(XY^{\mathbb{T}},\bar{S})+\lambda\big(\Theta^{\rho}(X)+\Theta^{\rho}(Y)\big),\\
	&f_A^\rho(S;\bar{X},\bar{Y}):=\,f(\bar{X}\bar{Y}^{\mathbb{T}},S)+\beta\tilde\Theta^{\rho}(S),\\
	&\omega^{\rho,I}_A(S;\bar{X},\bar{Y},\bar{S},\iota):=\,\beta\tilde\Theta^{\rho}_{I}(S)+\langle{\nabla}_Sf(\bar{X}{\bar{Y}}^{\mathbb{T}},\bar{S}),S-\bar{S}\rangle+\frac{\iota}{2}\|S-\bar{S}\|_F^2+\delta_{\mathcal{B}^S}(S),\\
	&\omega^{\rho,I^1,I^2}_J(X,Y;\bar{X},\bar{Y},\bar{S},\iota_1,\iota_2):=\,\lambda\big(\Theta^{\rho}_{I^1}(X)+\Theta^{\rho}_{I^2}(Y)\big)+\langle{\nabla}_Zf(\bar{X}{\bar{Y}}^{\mathbb{T}},\bar{S}){\bar{Y}},X-\bar{X}\rangle\\
	&+\langle{\nabla}_Zf(\bar{X}{\bar{Y}}^{\mathbb{T}},\bar{S})^{\mathbb{T}}\bar{X},Y-\bar{Y}\rangle+\frac{\iota_1}{2}\|X-\bar{X}\|_F^2+\frac{\iota_2}{2}\|Y-\bar{Y}\|_F^2+\delta_{\mathcal{B}^X}(X)+\delta_{\mathcal{B}^Y}(Y).
\end{align*}

\begin{algorithm}
	\caption{(\textbf{AJA-PG algorithm}): Adaptive joint alternating proximal gradient algorithm for (\ref{s_l_r})}
	\label{AJA-PG}
	
	{\bf Initialization:} Choose $(X^0,Y^0,S^0)\in\mathcal{B}$, $0<\underline{\iota}\leq\overline{\iota}$, $0<c_{\min}\leq c_{\max}$, $\varrho>1$ and $K,\tilde{K}\in\mathbb{N}$. Let $\{{r}_k\}$ and $\{{{s}_k}\}$ be positive sequences satisfying 
	\begin{equation}\label{choice_rho}
		r_k=r,\forall k\geq K\text{ and }{s_k}=s,\forall k\geq \tilde{K}.
	\end{equation}
	Let $\{\iota_{1,k}^B\},\{\iota_{2,k}^B\},\{\iota_{k}^B\}\subseteq[\underline{\iota},\overline{\iota}]$ and $\{c_{1,k}\},\{c_{2,k}\}\subseteq[c_{\min},c_{\max}]$. For any $k\geq0$, denote $\mathbb{I}_k:=\mathbb{I}_{X^k}\cap\mathbb{I}_{Y^k}$, $I^{1,k}=I^{{r}_k,\underline{X}^k_{\mathbb{I}_k}}$, $I^{2,k}=I^{{r}_k,\underline{Y}^k_{\mathbb{I}_k}}$ and $J^k=J^{{s}_k,S^k}$. Set $k=0$.\\
	{\bf Step 1:} Calculate $(X^{k+1},Y^{k+1})$ by JA-J subroutine, with $\omega_J$, $f_J$ and $(X^k,Y^k)$ replaced by $\omega^{r_k,I^{1,k},I^{2,k}}_J$, $f_J^{r_k}$ and $(\underline{X}^k_{\mathbb{I}_k},\underline{Y}^k_{\mathbb{I}_k})$, respectively, and go to Step 2.\\
	{\bf Step 2:} Calculate $S^{k+1}$ by JA-A subroutine, with $\omega_A$ and $f_A$ replaced by $\omega_A^{s_k,J^k}$ and $f_A^{s_k}$, respectively, and go to Step 3.\\
	{\bf Step 3:} Update $k\leftarrow k+1$ and return to Step 1.
\end{algorithm}

Similar to the analysis of Proposition \ref{limprop}, by \cite[Proposition 9]{Li2024}, the statements of Proposition \ref{limprop} are applicable to the AJA-PG algorithm for problem (\ref{s_l_r}), where $G$ and $(X^k,Y^k)$ are replaced by $F_\theta^{r,s}$ and $(\underline{X}^k_{\mathbb{I}_k},\underline{Y}^k_{\mathbb{I}_k})$, respectively.
\medskip

In the subsequent analysis, we will use the following assumption.
\begin{assumption}\label{furpar}
	\item[(i)] $\theta_1$ defined by (\ref{theta1_def}) is locally Lipschitz continuous, and $$\bar{\eta}:=\inf\{\eta\in \partial^c\theta_{1}(t):t\in\mathbb{R}_+\}>0.$$
	\item[(ii)] Assumption \ref{assumpars} (i) holds, parameter $\tilde{\eta}$ is chosen from $(0,\bar{\eta}]$, and parameters $r$ and $s$ in problem (\ref{s_l_r}) are chosen by
	\begin{equation}\label{cap_relas_f}
		\begin{split}
			&0<r<\min\big\{\min\{1,\tilde{\eta}\}\cdotp\lambda/\max\{L_X,L_Y\},\tau\big\}\,\,\text{if}\,\,\lambda>0,\\
			&0<s<\min\{1,\tilde{\eta}\}\cdotp\beta/L_S\,\,\text{if}\,\,\beta>0.
		\end{split}
	\end{equation}
\end{assumption}

Obviously, Assumption \ref{furpar} (i) implies the non-decreasing property of $\theta_1$. In view of Assumption \ref{furpar} (i), we have the following computational inferences for its proximal operator, which plays a key role in the subsequent analysis.
\begin{proposition}\label{prox_0}
	Define function $\bar\theta_1:\mathbb{R}_+\rightarrow\mathbb{R}_+$ with $\bar\theta_1(t)=t,\forall t\in\mathbb{R}_+$. Suppose that Assumption \ref{furpar} (i) holds, and constants $\rho,\,\bar{\rho}$ satisfy $0<\rho\leq\min\{\tilde{\eta},1\}\,\bar{\rho}$. For any given $\gamma,\varsigma>0$ and $l,u\in\mathbb{R}^l$ with $l\leq\bm{0}\leq u$, it holds that for any $z\in\mathbb{R}^l$,
	\begin{itemize}
		\item if $\|z\|_1\leq\gamma/{\bar\rho}$, ${\rm{prox}}_{\gamma\theta_{1}(\|\cdotp\|_1/\rho)+\delta_{[l,u]}(\cdotp)}(z)={\rm{prox}}_{\gamma\bar\theta_{1}(\|\cdotp\|_1/{\bar\rho})+\delta_{[l,u]}(\cdotp)}(z)=\bm{0}$;
		\item if $\|z\|_2\leq\gamma/{\bar\rho}$, ${\rm{prox}}_{\gamma\theta_{1}(\|\cdotp\|_2/\rho)+\delta_{B_{\varsigma}}(\cdotp)}(z)={\rm{prox}}_{\gamma\bar\theta_{1}(\|\cdotp\|_2/{\bar\rho})+\delta_{B_{\varsigma}}(\cdotp)}(z)=\bm{0}$.
	\end{itemize}
\end{proposition}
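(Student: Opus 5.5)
The plan is to show directly that $\bm{0}$ is the unique minimizer of each proximal subproblem. The argument compares the objective value at an arbitrary feasible $x\neq\bm{0}$ with its value at $\bm{0}$. Since $l\leq\bm{0}\leq u$ and $\bm{0}\in B_\varsigma$, the point $\bm{0}$ is feasible in both constrained problems. So it suffices to prove the following: for $\|\cdot\|$ denoting $\|\cdot\|_1$ (resp. $\|\cdot\|_2$), for every feasible $x\neq\bm{0}$,
\begin{equation*}
\phi(x):=\gamma\,\theta_1(\|x\|/\rho)+\tfrac12\|x-z\|_2^2 \;>\; \phi(\bm{0})=\tfrac12\|z\|_2^2,
\end{equation*}
and the analogous strict inequality with $\theta_1(\cdot/\rho)$ replaced by $\bar\theta_1(\cdot/\bar\rho)$. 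The $\bar\theta_1$ case is the easier one, because $\gamma\bar\theta_1(\|x\|/\bar\rho)=\gamma\|x\|/\bar\rho$ exactly.

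The key step is a linear lower bound on $\theta_1$ that comes from Assumption \ref{furpar}(i): I claim $\theta_1(t)\geq\bar\eta\, t$ for all $t\geq0$. To prove it, apply Lebourg's mean value theorem \cite[Theorem 2.3.7]{Clarke1983} to the locally Lipschitz function $\theta_1$ on $[0,t]$. Using $\theta_1(0)=0$, this gives $\theta_1(t)=\eta_\xi\, t$ for some $\eta_\xi\in\partial^c\theta_1(\xi)$ with $\xi\in(0,t)$, and hence $\eta_\xi\geq\bar\eta$. Combining this with $\tilde\eta\leq\bar\eta$ and $\rho\leq\min\{\tilde\eta,1\}\bar\rho\leq\tilde\eta\bar\rho$, I obtain
\begin{equation*}
\gamma\,\theta_1(\|x\|/\rho)\;\geq\;\gamma\bar\eta\|x\|/\rho\;\geq\;\gamma\tilde\eta\|x\|/\rho\;\geq\;\gamma\|x\|/\bar\rho .
\end{equation*}
So both penalties dominate $\gamma\|x\|/\bar\rho$.

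Next I expand $\phi(x)-\phi(\bm{0})=\gamma\,\theta_1(\|x\|/\rho)-\langle x,z\rangle+\tfrac12\|x\|_2^2$ and bound the inner product.
\begin{itemize}
\item In the $\ell_2$ case, Cauchy--Schwarz gives $\langle x,z\rangle\leq\|x\|_2\|z\|_2\leq\gamma\|x\|_2/\bar\rho$.
\item In the $\ell_1$ case, $\langle x,z\rangle\leq\|x\|_1\|z\|_\infty\leq\|x\|_1\|z\|_1\leq\gamma\|x\|_1/\bar\rho$.
\end{itemize}
Hence in either case $\phi(x)-\phi(\bm{0})\geq\gamma\|x\|/\bar\rho-\gamma\|x\|/\bar\rho+\tfrac12\|x\|_2^2=\tfrac12\|x\|_2^2>0$ for all $x\neq\bm{0}$. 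The same computation, using the identity for $\bar\theta_1$ in place of the lower bound, handles the second prox in each bullet. This shows that both proximal operators are single-valued and equal to $\bm{0}$.

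The only delicate point is the mean value step. I must make sure Lebourg's theorem applies on $\mathbb{R}_+$, i.e. at the endpoint $0$, which can be handled by working on $[\epsilon,t]$ and letting $\epsilon\downarrow0$ using continuity. I must also make sure the infimum $\bar\eta$ is taken over all $t\in\mathbb{R}_+$, as the assumption states, so that the bound $\eta_\xi\geq\bar\eta$ holds on the whole interval $(0,t)$ even when $t/\rho>1$.
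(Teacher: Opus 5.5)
Your proof is correct and takes essentially the same route as the paper. Both arguments rest on the linear minorant $\gamma\theta_1(\|x\|/\rho)\geq\gamma\|x\|/\bar\rho$, which holds with equality at $\bm{0}$ and comes from $\bar\eta>0$ and $\rho\leq\tilde\eta\bar\rho\leq\bar\eta\bar\rho$. After that, the paper simply cites that the prox of the $\ell_1$/$\ell_2$-norm penalty vanishes below the threshold $\gamma/\bar\rho$, while you verify it directly through $\phi(x)-\phi(\bm{0})\geq\frac{1}{2}\|x\|_2^2$. Your Lebourg argument (on $[\epsilon,t]$, then letting $\epsilon\downarrow0$) for $\theta_1(t)\geq\bar\eta t$ also supplies a justification that the paper only asserts.
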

\begin{proof}
	According to Assumption \ref{furpar} (i) and $\bar\theta_1(t)=t,\,\forall t\in[0,+\infty)$, one has that $\theta_1(t/\rho)\geq \bar\theta_1(t/\bar\rho)$ when $t\geq0$, ${\rm{prox}}_{\gamma\bar\theta_{1}(\|\cdotp\|_1/{\bar\rho})+\delta_{[l,u]}(\cdotp)}(z)=\bm{0}$ when $\|z\|_1\leq\gamma/{\bar\rho}$, and ${\rm{prox}}_{\gamma\bar\theta_{1}(\|\cdotp\|_2/{\bar\rho})+\delta_{B_{\varsigma}}(\cdotp)}(z)$ $=\bm{0}$ when $\|z\|_2\leq\gamma/{\bar\rho}$. Further, by the non-decreasing property of $\theta_1$ and $\rho\leq\bar\rho$, we have $\theta_{1}(\|z\|_q/\rho)\geq\theta_{1}(\|z\|_q/\bar\rho)\geq\bar\theta_{1}(\|z\|_q/\bar\rho),\,\forall q\in\{1,2\}$. Combining with $\theta_{1}(0)=\bar\theta_{1}(0)$, we get the statement.
\end{proof}

Next, under the bounded assumption on the feasible region of problem (\ref{s_l}), we will present the convergence analysis concerning the indicator sets, iteration sequence, and their corresponding function values generated by the AJA-PG algorithm.
\begin{theorem}\label{JA-PG-conv-thm}
	If Assumption \ref{furpar} holds and $\mathcal{B}$ is bounded, then the following properties hold for the sequence $\{(X^k,Y^k,S^k)\}$ generated by the AJA-PG algorithm.
	\begin{itemize}
		\item[(i)] $\mathbb{I}_{X^{k+1}}\subseteq\mathbb{I}_k\subseteq\mathbb{I}_{X^k}$, $\mathbb{I}_{Y^{k+1}}\subseteq\mathbb{I}_k\subseteq\mathbb{I}_{Y^k},\forall k\geq0$ and $\mathbb{J}_{S^{k+1}}\subseteq\mathbb{J}_{S^k},\forall k\geq \tilde{K}$.
		\item[(ii)] There exist $\hat{K}\geq \tilde{K}$, $\hat{\mathbb{I}}\subseteq[d]$ and $\hat{\mathbb{J}}\subseteq[m,n]$ such that $\mathbb{I}_{X^k}=\mathbb{I}_{Y^k}=\hat{\mathbb{I}}$ and $\mathbb{J}_{S^k}=\hat{\mathbb{J}},\forall k\geq\hat{K}$, and hence
		\vspace{-1mm}\begin{equation}\label{var-ret}
			\underline{X}^k_{\mathbb{I}_k}=X^k\text{ and }\underline{Y}^k_{\mathbb{I}_k}=Y^k,\forall k\geq\hat{K}.
		\end{equation}
		\item[(iii)] There exists $\check{K}\geq\hat{K}$ such that $(X^k,Y^k,S^k)$ own the consistency and isolation properties (\ref{I-eq}) and (\ref{glb_prp}) for any $k\geq\check{K}$.
		\item[(iv)] $\lim_{k\rightarrow\infty}F(X^k,Y^k,S^k)$ exists.
		\item[(v)] Any accumulation point of $\{(X^k,Y^k,S^k)\}$ is a strong stationary point of problems (\ref{s_l}) and (\ref{s_l_r}).
		\item[(vi)] If $f(XY^{\mathbb{T}},S)$ is semialgebraic, then $\lim_{k\rightarrow\infty}(X^k,Y^k,S^k)=(X^*,Y^*,S^*)$ and its convergence rate is R-sublinear convergence at worst, i.e.
		\vspace{-1mm}\begin{equation*}
			\exists~\alpha\in({1}/{2},1)\emph{ and }c>0,
			\emph{ s.t. }\|(X^k,Y^k,S^k)-(X^*,Y^*,S^*)\|_F\leq ck^{-\frac{1-\alpha}{2\alpha-1}},\,\forall k.
		\end{equation*}
	\end{itemize}
\end{theorem}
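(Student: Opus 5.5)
The plan is to prove the six items in order, each relying on the previous ones. The engine throughout is Proposition~\ref{prox_0} combined with the choice (\ref{cap_relas_f}) of $r,s$. Boundedness of $\mathcal{B}$, together with Proposition~\ref{limprop}(iii) in its AJA-PG version, gives a uniform bound $\bar\iota_{\max}$ on the accepted step parameters $\bar\iota_{1,k},\bar\iota_{2,k},\bar\iota_k$. By Remark~\ref{prox_cal}, the $i$-th column update is
$$X^{k+1}_i={\rm prox}_{\frac{\lambda}{\iota_{1,k}}\theta_{I^{1,k}_i}(\|\cdot\|_2/r_k)+\delta_{B_\tau}}\big(\underline{X}^k_{\mathbb{I}_k,i}-\nabla_Zf\,\underline{Y}^k_{\mathbb{I}_k,i}/\iota_{1,k}\big),$$
and the $S$ update has the analogous entrywise form.

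\textbf{Item (i).} For $i\notin\mathbb{I}_k$ the input column is $\bm 0-\nabla_Zf\cdot\bm 0=\bm 0$. The prox of a function minimized at $0$ applied to $\bm 0$ returns $\bm 0$, since $\theta_1,\theta_2\ge0$ with $\theta_1(0)=0$. (If $I^{1,k}_i=2$ there, $\theta_2\equiv1$ is a constant and the prox is still $\bm 0$.) Hence $\mathbb{I}_{X^{k+1}}\subseteq\mathbb{I}_k$, and likewise for $Y$. For $S$ with $k\ge\tilde K$, take $(i,j)\notin\mathbb{J}_{S^k}$. Then $J^k_{ij}=1$ and the input is $-\nabla_Sf/\iota_k$, whose modulus is at most $L_S/\iota_k$. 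Since $s<\beta\min\{1,\tilde\eta\}/L_S$, set $\bar\rho=\beta/L_S$ so that $s\le\min\{\tilde\eta,1\}\bar\rho$. With $\gamma=\beta/\iota_k$, the first bullet of Proposition~\ref{prox_0} applies because $|{\rm input}|\le\gamma/\bar\rho$, and it gives $S^{k+1}_{ij}=0$.

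\textbf{Items (ii) and (iii).} Item (ii) follows from monotonicity: the index sets are nested, finite and nonincreasing, so they stabilize. Combining the chain $\mathbb{I}_{X^{k+1}}\subseteq\mathbb{I}_k\subseteq\mathbb{I}_{X^k}$ with the same chain for $Y$ forces $\mathbb{I}_{X^k}=\mathbb{I}_{Y^k}=\mathbb{I}_k$ once stable, which gives (\ref{var-ret}).

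For (iii), consistency is (ii). For isolation, take $k\ge\max\{\hat K,K\}+1$ and suppose $\|X^{k+1}_i\|_2\in(0,r)$ for some $i$. I expect this to be the main obstacle, because the weight $I^{1,k}_i$ is frozen from the previous iterate. The case split is as follows.
\begin{itemize}
\item If $I^{1,k}_i=1$, use the second bullet of Proposition~\ref{prox_0} with $\bar\rho=\lambda/\max\{L_X,L_Y\}$. If the input norm is at most $\gamma/\bar\rho$ the output is $\bm 0$. Otherwise the input exceeds the nonzero column by a large amount. Then I would show, via the optimality condition (which uses $r<\tau$, so the ball constraint is inactive, and $\partial^c\theta_1\ge\bar\eta$), that the prox subgradient of norm at least $\lambda\tilde\eta/(\iota r)$ exceeds the gradient bound $L_X$, giving a contradiction.
\item If $I^{1,k}_i=2$, the penalty is constant and the output is the projected gradient step. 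Here I would argue from the descent inequality (1c) and the vanishing of successive differences (\ref{dif_lim}) that a column cannot land in $(0,r)$ infinitely often without being zeroed the next step. That would violate the stabilization in (ii), since $I^{1,k+1}_i=1$ would then trigger the zero prox.
\end{itemize}
The same argument handles $Y$ and $S$, so isolation holds eventually.

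\textbf{Items (iv)--(vi).} For (iv): for $k\ge\check K$, isolation gives $F(X^k,Y^k,S^k)=F_\theta^{r,s}(X^k,Y^k,S^k)$, and the latter is nonincreasing and bounded below by Proposition~\ref{limprop}(ii) in its AJA-PG version. Hence $\lim_kF(X^k,Y^k,S^k)$ exists.

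For (v): from the eventual fixed supports and fixed weights $I^{1,k},I^{2,k},J^k$, pass to the limit in the subproblem optimality conditions as in Theorem~\ref{GF_thm}, using bounded $\bar\iota$ and (\ref{dif_lim}). This yields (\ref{locr-incl}) at any accumulation point. Isolation persists in the limit: the support is fixed, and magnitudes at least $r$ remain at least $r$ (resp.\ $s$), so no limit entry can fall into $(0,r)$ or $(0,s)$. Consistency is inherited from (ii). Theorem~\ref{glob-stat-eqvs} then transfers the conclusion to (\ref{s_l}).

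For (vi): after $\check K$ the algorithm is a proximal gradient method on the fixed restricted problem, which is semialgebraic when the $\theta_i$ are. So I would apply the KL framework of \cite{Bolte2014Proximal}: the sufficient decrease (\ref{limxk}) and the relative error bound $\|\kappa^k\|\le L\|W^k-W^{k-1}\|$ give convergence of the whole sequence. The standard Łojasiewicz exponent analysis gives the R-sublinear rate when $\alpha\in(1/2,1)$.
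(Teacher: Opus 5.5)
Your items (i), (ii), (iv) and (v) follow the paper's argument. The isolation part of (iii), however, has a genuine gap.

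In your case $I^{1,k}_i=1$, neither of your two routes closes. Proposition~\ref{prox_0} does not apply, because the prox input $X^k_i-\nabla_Zf\,Y^k_i/\iota$ contains $X^k_i\neq\bm 0$ and need not lie below the threshold $\max\{L_X,L_Y\}/\iota$. The optimality condition also falls short. After multiplying through by $\bar\iota_{1,k}$ it reads
\[
\bm 0\in\nabla_Zf(X^k{Y^k}^{\mathbb{T}},S^k)Y^k_i+\bar\iota_{1,k}(X^{k+1}_i-X^k_i)+\lambda\,\partial\big[\theta_1(\|\cdot\|_2/r)\big](X^{k+1}_i)+N_{B_\tau}(X^{k+1}_i),
\]
so the subgradient bound $\lambda\tilde\eta/r$ must beat $L_X+\bar\iota_{1,k}\|X^{k+1}_i-X^k_i\|_2$, not merely $L_X$. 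At a single iteration this can fail. For $\theta_1(t)=t$ the column update is group soft-thresholding at level $\lambda/(\bar\iota_{1,k}r)$, which is below $r$ whenever $\bar\iota_{1,k}r^2>\lambda$. This is allowed, since $\underline\iota$ is arbitrary. A column with norm close to $r$ and small gradient then survives with norm in $(0,r)$.

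The missing ingredient is the paper's (\ref{distXY}). By (\ref{dif_lim}) and the uniform bound $\bar\iota_{1,k}\le\max\{\underline\iota,\varrho(c_{\max}+L_f)\}$ from Proposition~\ref{limprop}(iii), there is $\bar K$ with $\bar\iota_{1,k}\|X^{k+1}_i-X^k_i\|_2<\lambda\tilde\eta/r-\max\{L_X,L_Y\}$ for all $k\ge\bar K$. The right-hand side is positive by (\ref{cap_relas_f}).

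This gives the implication you actually need: for $k\ge\bar K$, $\|X^k_j\|_2<r$ forces $X^{k+1}_j=\bm 0$. This holds even when $\|X^{k+1}_j\|_2=\tau$, since $N_{B_\tau}$ then points along $X^{k+1}_j$ and only enlarges the norm. Combined with the support stabilization in (ii), no column can have norm in $(0,r)$ for $k\ge\bar K$. Your separate case $I^{1,k}_i=2$ is therefore unnecessary; as written, it relied on the same ``zero prox'' claim that needs this bound, and (\ref{dif_lim}) belongs in the weight-$1$ step, not there. The entries of $S$ are handled the same way via (\ref{distS}).

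In (vi) you assume the $\theta_i$ are semialgebraic. This is not a hypothesis of the theorem, and it is not needed. After $\check K$ the weights are frozen at $2$ on $\hat{\mathbb{I}}$ (resp.\ $\hat{\mathbb{J}}$) and at $1$ off it. The frozen regularizer $\lambda(\Theta^r_{I}(X)+\Theta^r_{I}(Y))+\beta\tilde\Theta^s_J(S)$ is therefore constant on $\underline{\mathcal{B}}^X_{\hat{\mathbb{I}}}\times\underline{\mathcal{B}}^Y_{\hat{\mathbb{I}}}\times\underline{\mathcal{B}}^S_{\hat{\mathbb{J}}}$. The iteration is then a joint/alternating projected-gradient scheme for $\hat f=f(XY^{\mathbb{T}},S)+\delta_{\underline{\mathcal{B}}^X_{\hat{\mathbb{I}}}}(X)+\delta_{\underline{\mathcal{B}}^Y_{\hat{\mathbb{I}}}}(Y)+\delta_{\underline{\mathcal{B}}^S_{\hat{\mathbb{J}}}}(S)$. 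Its KL property follows from the semialgebraicity of $f$ alone, which is how the paper argues.
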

\begin{proof}
	(i) In view of the boundedness of $\mathcal{B}$ and the locally Lipschitz continuity of $\nabla f$, Assumption \ref{assumpars} (i) naturally holds. In view of Remark \ref{prox_cal} and (\ref{cap_relas_f}), one has $Q^{k+1}_{i}=\bm{0},\bar{Q}^{k+1}_{i}=\bm{0},\forall i\notin\mathbb{I}_k$ and when $k\geq \tilde{K}$, $|\tilde{Q}^{k+1}_{ij}|\leq L_S/{\iota}_k<\beta/({\iota}_k s),\forall (i,j)\notin \mathbb{J}_{S^k}$. Then, by Proposition \ref{prox_0}, it holds that 
	\begin{align}
		&X^{k+1}_i=Y^{k+1}_i=\bm{0}\text{ if }i\notin \mathbb{I}_k,\,\,\forall k\geq0,\label{IXY_incl}\\
		&S^{k+1}_{ij}=0\text{ if }(i,j)\notin \mathbb{J}_{S^k},\,\,\forall k\geq \tilde{K},\label{JS_incl}
	\end{align}
	and hence $\mathbb{I}_{X_{k+1}}\subseteq\mathbb{I}_k,\forall k\geq0$ and $\mathbb{J}_{S^{k+1}}\subseteq\mathbb{J}_{S^k},\forall k\geq \tilde{K}$. Combining with $\mathbb{I}_k=\mathbb{I}_{X^k}\cap\mathbb{I}_{Y^k}$, we obtain that $\mathbb{I}_{X^{k+1}}\subseteq\mathbb{I}_k\subseteq\mathbb{I}_{X^k}$ and $\mathbb{I}_{Y^{k+1}}\subseteq\mathbb{I}_k\subseteq\mathbb{I}_{Y^k},\forall k\geq0$.
	
	(ii) By (i) and $\mathbb{I}_k=\mathbb{I}_{X^k}\cap\mathbb{I}_{Y^k}$, we have $\mathbb{I}_{k+1}\subseteq\mathbb{I}_{X^{k+1}}\subseteq\mathbb{I}_k\subseteq[d]$ and $\mathbb{I}_{k+1}\subseteq\mathbb{I}_{Y^{k+1}}\subseteq\mathbb{I}_k\subseteq[d]$. It implies that $\lim_{k\rightarrow\infty}\mathbb{I}_k$ exists and hence $\lim_{k\rightarrow\infty}\mathbb{I}_{X^k}$ and $\lim_{k\rightarrow\infty}\mathbb{I}_{Y^k}$ exist. Moreover, due to $\mathbb{J}_{S^{k+1}}\subseteq\mathbb{J}_{S^k}\subseteq[m,n],\forall k\geq\tilde{K}$ in (i), we obtain the existence of $\lim_{k\rightarrow\infty}\mathbb{J}_{S^k}$. As a result, the statement naturally holds by (i). Due to $\mathbb{I}_{X^k}=\mathbb{I}_{Y^k},\forall k\geq\hat{K}$, we get (\ref{var-ret}).
	
	(iii) According to $\mathbb{I}_{X^k}=\mathbb{I}_{Y^k},\forall k\geq\hat{K}$ in (ii), we get that $(X^k,Y^k,S^k),k\geq\hat{K}$ own the consistency property (\ref{I-eq}).
	
	Next, we prove that $(X^k,Y^k,S^k)$ owns the isolation property (\ref{glb_prp}) after finite iterations.
	
	By the boundedness of $\mathcal{B}$, there exist positive $\varsigma$ and $ L_f$ such that $\|(X,Y,S)\|^2_F\leq\varsigma,\forall (X,Y,S)\in\mathcal{B}$ and $\nabla_{(X,Y,S)}f(XY^{\mathbb{T}},S)$ is Lipschitz continuous on $\mathcal{B}$ with modulus $L_f$. Due to (\ref{var-ret}) and Proposition \ref{limprop} (ii), we get $\lim_{k\rightarrow+\infty}\|(X^{k+1},Y^{k+1},S^{k+1})-(X^k,Y^k,S^k)\|_F=0$. Then there exists a $\bar{K}\geq \max\{\hat{K},K\}$ such that for any $k\geq\bar{K}$, one has that for $ i\in[d]$,
	\begin{equation}\label{distXY}
		\max\{\|X^{k+1}_i-X^k_i\|_2,\|Y^{k+1}_i-Y^k_i\|_2\}\\
		<({\lambda}\tilde{\eta}/{r}-\max\{L_X,L_Y\})/\max\{\underline{\iota},\varrho(c_{\max}+ L_f)\},
	\end{equation}
	and for $(i,j)\in[m,n]$,
	\begin{equation}\label{distS}
		|S^{k+1}_{ij}-S^k_{ij}|<({\beta}\tilde{\eta}/{s}-L_S)/\max\{\underline{\iota},\varrho (c_{\max}+L_f)\},
	\end{equation}
	where $r$ and $s$ are given in (\ref{choice_rho}). Moreover, it follows from (\ref{var-ret}) and the optimality conditions of (\ref{step1-eq}) with $\iota_{i,k}=\bar\iota_{i,k},i\in[2]$ that
	\begin{align}\label{Xincl}
		\bm{0}\in\,\nabla_Z f(X^k{Y^k}^{\mathbb{T}},S^k){Y^k}+\bar\iota_{1,k}(X^{k+1}-X^k)+\lambda\partial_X\Theta^{r}_{I^{1,k}}(X^{k+1})+N_{\mathcal{B}^X}(X^{k+1}).
	\end{align}
	Consider $\lambda>0$ and suppose that
	\begin{equation}\label{cs1c}
		\exists~\bar{k}\geq\bar{K}\text{ and }\bar{j}\in[d],\,\text{ s.t. }\|X^{\bar{k}}_{\bar{j}}\|_2<r.
	\end{equation}
	Then $I^{1,\bar{k}}_{\bar{j}}=1$. This together with $r<\tau$ in Assumption \ref{furpar} (ii) yields that if $X^{\bar{k}+1}_{\bar{j}}\neq\bm{0}$, then $\|\bar\gamma\|_2>\lambda\tilde{\eta}/{r},\forall\bar\gamma\in \lambda\partial_{X_{\bar{j}}}\Theta^{r}_{I^{1,k}}(X^{\bar{k}+1})+N_{\mathcal{B}^{X}_{\bar{j}}}(X^{\bar{k}+1}_{\bar{j}})$. Moreover, combining Assumption \ref{furpar} (ii), Proposition \ref{limprop} (iii) and (\ref{distXY}), we obtain that $\|\nabla_Zf(X^{\bar{k}}{Y^{\bar{k}}}^{\mathbb{T}},S^{\bar{k}}){Y^{\bar{k}}_{\bar{j}}}+\bar\iota_{1,\bar{k}}(X^{\bar{k}+1}_{\bar{j}}-X^{\bar{k}}_{\bar{j}})\|_2<\lambda\tilde{\eta}/{r}$. Then we have the contradiction to (\ref{Xincl}) if $X^{\bar{k}+1}_{\bar{j}}\neq\bm{0}$. As a result, we obtain $X^{\bar{k}+1}_{\bar{j}}=\bm{0}$ when (\ref{cs1c}) holds, which implies that there exists $\check{K}\geq\hat{K}$ such that $\|X^k_j\|_2\notin(0,r),\forall j\in[d]$ for all $k\geq\check{K}$ when $\lambda>0$. Similarly, we have the corresponding results for $Y^k$ and $S^k$ as in statement (iii).
	
	(iv) According to (iii), it holds that $F_{\theta}^{r,s}(X^k,Y^k,S^k)=F(X^k,Y^k,S^k),\forall k\geq\check{K}$. Then, by Proposition \ref{limprop} (ii), we obtain the existence of $\lim_{k\rightarrow+\infty}F(X^k,Y^k,S^k)$.
	
	(v) By (ii) and (iii), there exist $I\in[2]^d$ and $J\in[2]^{m\times n}$ such that $I^{1,k}=I^{2,k}=I,J^k=J,\forall k\geq\check{K}$. Combining with Theorems \ref{glob-stat-eqvs} and \ref{GF_thm}, this statement holds.
	
	(vi) Define $\hat{f}(X,Y,S):=f(XY^{\mathbb{T}},S)+\delta_{\underline{\mathcal{B}}_{\hat{\mathbb{I}}}^X}(X)+\delta_{\underline{\mathcal{B}}_{\hat{\mathbb{I}}}^Y}(Y)+\delta_{\underline{\mathcal{B}}_{\hat{\mathbb{J}}}^S}(S),\forall (X,Y,S)$. Since $\underline{\mathcal{B}}_{\hat{\mathbb{I}}}^X,\,\underline{\mathcal{B}}_{\hat{\mathbb{I}}}^Y,\underline{\mathcal{B}}_{\hat{\mathbb{J}}}^S$ are semialgebraic and $f$ is semialgebraic, by \cite[Section 4.3]{Attouch2010MOR}, $\hat{f}$ is a KL function with a {\L}ojasiewicz exponent $\alpha\in[0,1)$. Combining with \cite[Theorem 2.9]{Attouch2013} and \cite[Theorem 2]{Attouch2009}, it is not hard to prove the required statement.
\end{proof}

\begin{remark}
	There are some common smooth and semialgebraic functions, such as the squared loss function $f(Z,S)=\sum_{(i,j)\in\varOmega}(Z+S-M^o)_{ij}^2$ and the Huber loss function $f(Z,S)=\sum_{(i,j)\in\varOmega}h\big((Z+S-M^o)_{ij}\big)$, in which $h$ is the Huber function in \cite{huber1973robust} and $\varOmega\subseteq[m]\times[n]$ is an index set. Huber loss function is a popular loss function used in robust regression when dealing with outliers in the data.
\end{remark}

\begin{remark}
	In view of Theorem \ref{JA-PG-conv-thm} (i),
	\begin{itemize}
		\item for the low-rank inducing regularizer $\text{nnzc}(X)+\text{nnzc}(Y)$, only the nonzero columns of $X^k$ and $Y^k$ as in $\mathbb{I}_k$ need to be updated;
		\item for the sparse regularizer $\|S\|_0$, only the nonzero entries of $S^k$ as in $\mathbb{J}_{S^k}$ need to be updated after $\tilde{K}$ iterations.
	\end{itemize}
	Then the columns of $X^k$ and $Y^k$ and the entries of $S^k$ will no longer change once they are updated to zero after finite iterations. Based on the consistency of $X^k$ and $Y^k$, AJA-PG is actually solving problem (\ref{s_l}) with decreasing rank correspondingly.
\end{remark}
\begin{remark}
	Consider problem (\ref{s_l}) with ball constraints replaced by the box constraints $\mathcal{B}^X:=[\bm{\tau^1},\bm{\tau^2}]\subseteq\mathbb{R}^{m\times d}$ and $\mathcal{B}^Y:=[\bm{\pi^1},\bm{\pi^2}]\subseteq\mathbb{R}^{n\times d}$. If $\tau\geq\|\mathcal{B}^Z\|_{\max}^{1/2}$ is changed to $\mathcal{B}^X\times \mathcal{B}^Y\supseteq\|\mathcal{B}^Z\|_{\max}^{1/2}[-\bm{1},\bm{1}]$, then we can also get the relations among problems (\ref{m_s_l}), (\ref{s_l}) and (\ref{m_s_l_b}) in Proposition \ref{gl_re_c}. Note that if $\min_{i,j,k,l\in[d]}\{\bm\tau^1_i,\bm\tau^2_j,$ $\bm\pi^1_k,\bm\pi^2_l\}$ $\geq(\|\mathcal{B}^Z\|_{\max}/d)^{1/2}$, then $\mathcal{B}^X\times \mathcal{B}^Y\supseteq\|\mathcal{B}^Z\|_{\max}^{1/2}[-\bm{1},\bm{1}]$. Moreover, problem (\ref{s_l}) with such box constraints, also satisfies the corresponding theoretical results with $\|\cdot\|_2$ replaced by $\|\cdot\|_1$ in Sections \ref{section 2}, \ref{section 3} and \ref{section 4} without requiring the condition $r<\tau$ in Assumptions \ref{assumpars} and \ref{furpar}.
\end{remark}

\section{Numerical experiments}\label{section 5}
In this section, we test the performance of the AJA-PG algorithm for problem (\ref{s_l}) on synthetic and real datasets. Throughout this section, all computational results are obtained by running MATLAB 2022a on a MacBook Pro (3.2 GHz, 16 GB RAM). The running time (in seconds) of the AJA-PG algorithm is measured in the same manner as in \cite{Li2024,Pan2022factor}, including the time spent on computing the initial point. The numerical results reported are the average of the data generated by the algorithms involved in ten random trials.

We use the AJA-PG algorithm to solve relaxation problem (\ref{s_l_r}) with $\theta_1(t)=t,\forall t\in\mathbb{R}_+$, for the problem (\ref{s_l}), where $f(Z,S):=\frac{1}{2}\|\mathcal{P}_{\varOmega}(Z+S-M^o)\|_F^2,\forall Z\in\mathbb{R}^{m\times n}$ with the given matrix $M^o\in\mathbb{R}^{m\times n}$, and the index set $\varOmega\subseteq[m]\times [n]$ of observed entries. Considering the regularization parameter suggested in \cite{candes2011robust,zhang2015exact}, we use $\lambda=\sqrt{\max\{m,n\}}\,\beta/2$ if $\beta\neq0$ in the numerical experiments. Throughout this section, AJA-PG$_\text{ball}$ and AJA-PG$_\text{box}$ algorithms correspond to the AJA-PG algorithm for problem (\ref{s_l}) with ball and box constraints, respectively.

We denote the largest absolute value of all components of all matrices in $\mathcal{B}^Z\times \mathcal{B}^S$ from problem (\ref{m_s_l_b}) as $a_1$ and that of the matrix $M^o$ as $a_2$. Depending on the theoretical needs, we set $\Delta:=a_1\sqrt{mn}$, $L_X:=n(a_1+a_2)\sqrt{m\Delta}$, $L_Y:=m(a_1+a_2)\sqrt{n\Delta}$, $L_S:=a_1+a_2$, $r:=0.99\lambda/\max\{L_X,L_Y\}$ and $s:=0.99\beta/L_S$. Let ${r}_k=\sqrt{2\lambda},\forall k\leq k_{\max}$ and $\max\big\{\frac{\sqrt{2\lambda}}{k-k_{\max}},r\big\}$, otherwise, and ${s}_k=\sqrt{2\beta},\forall k\leq k_{\max}$ and $\max\big\{\frac{\sqrt{2\beta}}{k-k_{\max}},s\big\}$, otherwise. Set the initial point same as in \cite{Li2024}. Set the stopping criterion in subsection \ref{num_exp_1} for AJA-PG as $\frac{|F(X^k,Y^k,S^k)-F_0(X^{k-1},Y^{k-1},S^{k-1})|}{\max\{1,F(X^k,Y^k,S^k)\}}\leq 10^{-5}$ or the iteration number $k=500$, and the stopping criterion in subsection \ref{num_exp_3} same as in \cite{Li2024}.

%Set the stopping criterion in subsections \ref{num_exp_1} and subsection \ref{num_exp_3} same as in \cite{Li2024}, and the stopping criterion in subsections \ref{num_exp_2} for AJA-PG as $\frac{|F(X^k,Y^k,S^k)-F_0(X^{k-1},Y^{k-1},S^{k-1})|}{\max\{1,F(X^k,Y^k,S^k)\}}\leq 10^{-5}$ or the iteration number $k=500$. Set the initial point same as in \cite{Li2024}.

%In AJA-PG, denote the initial point as $(X_0,Y_0,S_0)$ and set $(X^0,Y^0)$ be only related to the observation $\mathcal{P}_{\varOmega}(M^o)$ as follows. By the singular value decomposition of $\mathcal{P}_{\varOmega}(M^o)$, we obtain $\mathcal{P}_{\varOmega}(M^o)=U\Sigma V^{\mathbb{T}}$ and then let $X^0=[\sqrt{\sigma_1}U_1,\ldots,\sqrt{\sigma_d}U_d]$ and $Y^0=[\sqrt{\sigma_1}V_1,\ldots,\sqrt{\sigma_d}V_d]$, where $\sigma_i$ is the $i$-th diagonal element of $\Sigma$. Based on the observed data, we use the rank estimation procedure in \cite{shang2018bilinear} to estimate the column number $d$ in $\mathcal{B}^X$ and $\mathcal{B}^Y$. 
\subsection{Synthetic data}\label{num_exp_1}
In this part, we consider problem (\ref{s_l}) with $\beta=1$, and use the same data setup\footnote{\url{https://www.dropbox.com/s/npyc2t5zkjlb7tt/Code BFMNM.zip?dl=0}} as in \cite{shang2018bilinear}. The low-rank matrix $Z^l$ of rank $r$ is generated as $PQ^\mathbb{T}$ , where $P\in\mathbb{R}^{n\times r}$ and $Q\in\mathbb{R}^{n\times r}$ are independent matrices whose elements are independent and identically distributed random variables sampled from the standard Gaussian distribution. The sparse matrix $S^u\in\mathbb{R}^{n\times n}$ is generated by the procedure that its support is chosen uniformly at random and its non-zero entries are independent and identically distributed random variables sampled uniformly in the interval $[-5,5]$. The input matrix is $M^o=Z^l+S^u+R^u$, where the Gaussian noise is $R^u=nf\times \textrm{randn}(n)$ and $nf\geq0$ is the noise factor. We compared the AJA-PG with the D-N and F-N algorithms in \cite{shang2018bilinear}, WNNM algorithm in \cite{gu2017weighted} and PSVT algorithm in \cite{OTH2016} under the same setting as in \cite{shang2018bilinear}. For quantitative evaluation, we measured the performance of low-rank component recovery by the RSE and running time. The comparison results under the same settings, as well as with varying sparsity and rank, are shown in Table \ref{table1}. Both AJA-PG$_\text{ball}$ and AJA-PG$_\text{box}$ algorithms have better numerical performance and the AJA-PG$_\text{box}$ algorithm takes the shortest time.
\begin{table}[htbp]
	\centering
	\caption{Average RSE and running time for different methods on the problems with $r^\star=5$, $sr=0.9$ and five square matrix dimensions}\label{table1}
	\resizebox{0.6\textwidth}{!}{\begin{tabular}{llcccccc}
			\toprule
			\multirow{2}{*}{Metric} & \multirow{2}{*}{Size ($n$)} & \multicolumn{6}{c}{Method} \\
			\cmidrule(lr){3-8}
			& & PSVT & WNNM & D-N & F-N & AJA-PG$_\text{ball}$ & AJA-PG$_\text{box}$  \\
			\midrule
			
			\multirow{5}{*}{RSE}
			& 500 & 0.130   &0.046   &0.045   &0.043   &\textbf{0.035}  &\textbf{0.035}   \\
			& 1000 & 0.128   &0.034   &0.032  &0.030  &\textbf{0.025}  &\textbf{0.025}  \\
			& 1500 & 0.128   &0.029   &0.027   &0.025   &\textbf{0.021}   &\textbf{0.021}   \\
			& 2000 & 0.127   &0.026   &0.023   &0.022   &\textbf{0.018}  &\textbf{0.018} \\
			& 2500 & 0.127   &0.024   &0.021  &0.020   &\textbf{0.016}  &\textbf{0.016} \\
			\midrule
			
			\multirow{5}{*}{Time}
			& 500 & 1.65 & 8.66 & 0.57 & 0.46 & 0.29 & \textbf{0.28}  \\
			& 1000 & 7.68   &37.63   &1.38   &1.22   &0.64  &\textbf{0.62}   \\
			& 1500 & 18.07 & 109.89 & 2.99 & 2.30 & 1.16 & \textbf{1.10}  \\
			& 2000 & 37.31 & 237.90 & 5.27 & 3.80 & 1.95 & \textbf{1.87}  \\
			& 2500 & 72.22   &500.96   &8.42   &7.59   &3.50  &\textbf{3.35} \\
			\bottomrule
	\end{tabular}}
\end{table}

\subsection{Real data}\label{num_exp_3}
In this part, we compare the performance of the AJA-PG algorithm and the first algorithm in \cite{Li2024} (denoted as AAPG) using real datasets for matrix completion under non-uniform sampling setting and different sampling ratios (SR) same as in \cite{Li2024}. We use some real datasets (Jester joke dataset and MovieLens dataset) to compare the numerical performance of the AJA-PG$_{\text{ball}}$ and AAPG algorithms for the considered problem in \cite{Li2024}. In Table \ref{real_data}, the datasets Jester-1, Jester-2 and Jester-3 are from the Jester joke dataset\footnote{\url{http://www.ieor.berkeley.edu/~goldberg/jester-data/}}, and the datasets Movie-100K and Movie-1M are from the MovieLens dataset\footnote{\url{http://www.grouplens.org/node/73}}. To measure the accuracy of computed solutions, same as in \cite{Li2024}, we adopt the normalized mean absolute error (NMAE). Under the same data and algorithmic settings as in \cite[subsection 6.2]{Li2024}, the comparison results under different sampling ratios (SR) are listed in Table \ref{real_data}. As the data dimension increases, the AJA-PG$_{\text{ball}}$ algorithm has a significant advantage in the running time, and achieves better solution accuracy in almost all tests.

\begin{table}[h]
	\centering
	\scriptsize
	\caption{Average NMAE, rank and running time of the AAPG and AJA-PG$_{\text{ball}}$ algorithms for Jester joke dataset and MovieLens dataset \label{real_data}}
	\resizebox{0.7\textwidth}{!}{\begin{tabular}{ccccccccc}
			\hline
			Dataset &($m$,\,$n$)&\!\! SR & \multicolumn{3}{l}{\qquad\qquad AAPG}
			&\multicolumn{3}{l}{\ \ \  \qquad\qquad AJA-PG$_\text{ball}$}\\
			\cmidrule(lr){4-6} \cmidrule(lr){7-9} 
			& & &\!\!\!\! NMAE&\!\!\! rank \!\!\!\! &time &  NMAE \!\!\!& rank \!\!\!& time\\
			\hline
			Jester-1
			&(1000,\,100)
			&0.15 
			& 0.1925 & 1.6 & \textbf{0.21}
			& \textbf{0.1899} & 1.8 & \textbf{0.21}\\
			& &0.25 
			& 0.1804 & 1.8 & \textbf{0.11}
			& \textbf{0.1802} & 1.8 & \textbf{0.11}\\
			\cmidrule(lr){1-9}
			Jester-2
			&(2000,\,100)
			&0.15 
			& 0.1902 & 1.8 & 0.39
			& \textbf{0.1894} & 1.8 & \textbf{0.33}\\
			& &0.25 
			& 0.1802  & 1.7 & 0.17
			& \textbf{0.1800} & 1.7 & \textbf{0.16}\\
			\cmidrule(lr){1-9}
			Jester-3
			&(4000,\,100)
			&0.15 
			& 0.2267 & 2.3 & 0.67
			& \textbf{0.2232} & 2.2 & \textbf{0.52}\\
			& &0.25 
			& 0.2213  & 2.5 & 0.45
			& \textbf{0.2174} & 2.4 & \textbf{0.38}\\
			\cmidrule(lr){1-9}
			Movie-100K
			&(943,\,1682)
			&0.10 
			& 0.2202 & 2.0 & 1.39
			& \textbf{0.2199} & 2.0 & \textbf{1.02}\\
			& &0.15 
			& 0.2138 & 1.9  & 1.21
			& \textbf{0.2128} & 1.5 & \textbf{0.73}\\
			& &0.20 
			& 0.2075 & 1.7 & 1.18
			& \textbf{0.2074} & 1.9 & \textbf{0.93}\\
			& &0.25 
			& 0.2042 & 1.6  & 1.04
			& \textbf{0.2041} & 1.5 & \textbf{0.72}\\
			\cmidrule(lr){1-9}
			Movie-1M
			&(6040,\,3706)  
			&0.10 
			& 0.2040 & 1.4 & 24.6
			& \textbf{0.2038} & 1.5 & \textbf{19.3}\\
			& &0.15
			& 0.1953 & 1.5 & 31.3
			& \textbf{0.1948} & 1.5 & \textbf{19.7}\\
			& &0.20 
			& 0.1918 & 1.7 & 32.6
			& \textbf{0.1917} & 1.6 & \textbf{16.1}\\
			& &0.25 
			& \textbf{0.1872} & 1.8 & 31.3
			& 0.1882 & 1.7 & \textbf{17.0}\\
			\hline
	\end{tabular}}
\end{table}

\section{Conclusions}\label{section 6}
This work is the first to study the rank and $\ell_0$ regularized matrix optimization problems based on matrix factorization. In the context of RPCA, the low-rank structure provides a computational advantage for the factorization model. We established the equivalences between the rank and $\ell_0$ regularized model and its factorization model in the senses of global minimizers and stationary points, respectively. For the bound constrained factorization model, we constructed an equivalent nonconvex relaxation framework with general adaptability. This established a multi-equivalence in the sense of optimality conditions between the factorization problem and a class of continuous nonconvex relaxation problems. Moreover, we designed a joint alternating proximal gradient (JA-PG) algorithm to solve the general factorization problem, and showed its convergence to the stationary points for the rank and $\ell_0$ regularized problem. Further, we proposed an adaptive JA-PG (AJA-PG) algorithm to solve the bound constrained factorization problem and its relaxation problems. We proved the convergence of the AJA-PG algorithm to the stationary points with stronger optimality conditions for these problems. Finally, through numerical experiments, we demonstrated the effectiveness of the proposed models and algorithms for some popular problems.

\bibliographystyle{plain}
\bibliography{references}
\end{document}